\documentclass[11pt,a4paper]{article}
\usepackage[hmargin=1.25in, vmargin=1.25in, marginparwidth=1in, marginparsep=0.1in, a4paper, centering]{geometry}
\usepackage[T1]{fontenc}
\usepackage[lining]{ebgaramond}
\usepackage{amsmath,amsthm}
\usepackage[ebgaramond]{newtxmath}
\usepackage{mathtools,apptools}
\usepackage{bm,enumitem,microtype}
\usepackage[font=small, labelfont=bf, labelsep=colon, justification=centerlast, margin=0.5in]{caption}
\usepackage{verbatim}
\usepackage{array,booktabs,longtable}
\usepackage{graphicx}
\usepackage[dvipsnames,svgnames]{xcolor}%
\usepackage[colorlinks,allcolors=blue,pagebackref]{hyperref}
\usepackage{pifont}
\renewcommand*{\backrefalt}[4]{%
\ifcase #1 %
\textcolor{red}{No citations}%
\or
\ding{43}~p.~#2%
\else
\ding{43}~pp.~#2%
\fi}
\usepackage{breakcites}
\usepackage[nameinlink,noabbrev]{cleveref}
\crefformat{section}{\S#2#1#3}
\crefrangeformat{section}{\S\S#3#1#4 to #5#2#6}

\crefformat{subsection}{\S#2#1#3}
\crefrangeformat{subsection}{\S\S#3#1#4 to #5#2#6}

\crefformat{appendix}{Appendix~#2#1#3}
\crefrangeformat{appendix}{Appendices~#3#1#4 to #5#2#6}
\numberwithin{equation}{section}
\theoremstyle{plain}
\newtheorem{theorem}{Theorem}[section]
\newtheorem{proposition}[theorem]{Proposition}
\newtheorem{conjecture}[theorem]{Open Problem}
\newtheorem{lemma}[theorem]{Lemma}
\newtheorem{corollary}[theorem]{Corollary}
\theoremstyle{definition}
\newtheorem{remark}[theorem]{Remark}
\newtheorem{example}[theorem]{Example}

\newcommand{\R}{\mathbb{R}}
\newcommand{\dd}{\,\mathrm{d}}
\newcommand{\D}{\mathbb{D}}
\newcommand{\jone}{j_{1,1}}
\newcommand{\fhat}[1]{\widehat{\chi_{#1}}}
\newcommand{\ev}[1]{\mathbf{e}_{#1}}
\newcommand{\N}{\mathcal{N}}

\newcommand{\dr}{\mathrm{d}}
\newcommand{\er}{\mathrm{e}}
\newcommand{\ir}{\mathrm{i}}
\newcommand{\sgn}{\operatorname{sign}}

\newcommand{\nj}[1]{\mathbf{n}_{#1}}

\newcommand{\mydoi}[1]{\href{https://doi.org/#1}{doi: #1}}
\newcommand{\myarXiv}[1]{\href{https://arxiv.org/abs/#1}{arXiv: #1}}
\newcommand{\xb}{\mathbf{x}}
\newcommand{\yb}{\mathbf{y}}
\newcommand{\xib}{\boldsymbol{\xi}}
\newcommand{\etab}{\boldsymbol{\eta}}
\newcommand{\zb}{\mathbf{z}}
\usepackage{mleftright}
\mleftright
\usepackage{marginfix}
\usepackage[backgroundcolor=white]{todonotes}
\renewcommand\footnotemark{}
\usepackage[nobottomtitles,pagestyles]{titlesec}
\titleformat{\section}
{\normalfont\large\bfseries}
{\filcenter\IfAppendix{Appendix }{\S}\thesection.}{1ex}{\filcenter}
\titleformat{\subsection}
{\normalfont\bfseries}
{\filcenter\S\thesubsection.}{1ex}{\filcenter}
\title{An isoperimetric problem for Fourier zeros of centrally symmetric convex bodies%
\thanks{\textbf{MSC (2020): }52A40 primary, 42B10 secondary}%
\thanks{\textbf{Keywords: }Fourier transform, Fourier zero, convex body,  isoperimetric problem, Bessel functions, validated numerics}%
}
\author{Javier G\'omez-Serrano
\thanks{%
\textbf{J. G.-S.:} Department of Mathematics,
Brown University,
314 Kassar House, 151 Thayer St.
Providence, RI 02912, USA; \href{mailto:javier_gomez_serrano@brown.edu}{\nolinkurl{javier_gomez_serrano@brown.edu}}; \url{https://sites.brown.edu/jgs}; ORCID: 0000-0002-5962-0859%
}
\and
Michael Levitin
\thanks{%
\textbf{M. L.: }Department of Mathematics and Statistics, University of Reading, 
Pepper Lane, Whiteknights, Reading RG6 6AX, UK;
\href{mailto:M.Levitin@reading.ac.uk}{\nolinkurl{M.Levitin@reading.ac.uk}}; \url{https://www.michaellevitin.net};  ORCID: 0000-0003-0020-3265%
}
\and
Daniel Platt
\thanks{%
\textbf{D. P.:} Department of Mathematics, Imperial College London, 180 Queen's Gate, South Kensington, London SW7 2RH, UK;
\href{mailto:d.platt@imperial.ac.uk}{\nolinkurl{d.platt@imperial.ac.uk}}; \url{https://dplatt.de}; ORCID: 0000-0001-9805-6579%
}
\and Iosif Polterovich
\thanks{%
\textbf{I. P.: }D\'e\-par\-te\-ment de math\'ematiques et de statistique, Univer\-sit\'e de Mont\-r\'eal, 
CP 6128 succ Centre-Ville, Mont\-r\'eal QC  H3C 3J7, Canada;
\href{mailto:iosif.polterovich@umontreal.ca}{\nolinkurl{iosif.polterovich@umontreal.ca}}; \url{http://www.dms.umontreal.ca/\~iossif}; ORCID: 0009-0007-0052-6589%
}
}
\date{\small v. 1, 9 September 2026}

\begin{document}
\maketitle

\begin{abstract}
Given a centrally symmetric convex body, consider the zero set of the Fourier transform of its characteristic function. We study how large the distance from this set to the origin can be when the volume of the body is fixed. A 2009 conjecture of Benguria, Levitin, and Parnovski asserts that the maximum is attained by a Euclidean ball. We disprove it in all dimensions greater than one. In the plane, every regular centrally symmetric polygon with at least twelve sides outperforms the disk, albeit by a tiny margin, and the regular dodecagon is best among them. In dimensions three and higher, no maximiser exists.
\end{abstract}

\tableofcontents

\section{Introduction and main results}\label{sec:intro}

\subsection{The functional $\kappa$}

Let $\Omega\subset\R^d$ be a bounded convex  domain which is centrally symmetric with respect to the origin. In what follows, we refer to domains satisfying the latter property as {\em balanced}.  Let $\chi_\Omega$ denote the characteristic function of $\Omega$. Consider the zero set of its Fourier transform,
\[
\fhat{\Omega}(\xib):=\int_\Omega \er^{\ir\xib\cdot\xb}\dd \xb,
\qquad
\mathcal{N}(\Omega):=\{\xib\in\R^d:\fhat{\Omega}(\xib)=0\}.
\]
Note that since $\Omega$ is centrally symmetric, $\fhat{\Omega}$ is a real-valued function on $\R^d$.  The following basic result holds.

\begin{proposition}\label{prop:existence}
Let $\Omega\subset\R^d$ be a balanced set of finite positive Lebesgue measure.
Then $\mathcal{N}(\Omega)\ne\varnothing$. 
\end{proposition}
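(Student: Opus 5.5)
We argue by contradiction. Note that the statement only assumes $\Omega$ is a measurable, centrally symmetric set with $0<|\Omega|<\infty$, so convexity cannot be used. Since $\Omega=-\Omega$,
\[
\fhat{\Omega}(\xib)=\int_\Omega\cos(\xib\cdot\xb)\dd\xb
\]
is real. Because $\chi_\Omega\in L^1(\R^d)$, it is continuous and tends to $0$ at infinity by Riemann--Lebesgue. It equals $|\Omega|>0$ at $\xib=0$. If $\mathcal{N}(\Omega)=\varnothing$, connectedness of $\R^d$ and continuity force $\fhat{\Omega}>0$ everywhere. The plan is to contradict this positivity.

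We pass to one dimension first. Fix a unit vector $\ev{1}$ and restrict to $\xib=t\ev{1}$. By Fubini,
\[
\fhat{\Omega}(t\ev{1})=\widehat{g}(t),\qquad g(s):=\mathcal{H}^{d-1}\bigl(\Omega\cap\{x_1=s\}\bigr).
\]
Here $g\in L^1(\R)$, $g\ge0$, $g$ is even (by central symmetry), $\int g=|\Omega|>0$, and $g\le |\Omega'|$ is not assumed bounded. Positivity of $\widehat{g}$ on all of $\R$ is what must be refuted. Nonnegative integrable functions can certainly have positive Fourier transforms (e.g.\ Gaussians), so we need more structure. That structure is that $\chi_\Omega$ takes only the values $0$ and $1$. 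We therefore work in $\R^d$ directly rather than through $g$.

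The key step is the identity $\chi_\Omega^2=\chi_\Omega$. Suppose $\widehat{\chi_\Omega}>0$. Then $\chi_\Omega*\chi_\Omega$ (with $\chi_\Omega$ even) has Fourier transform $\widehat{\chi_\Omega}^{\,2}$. By Plancherel, $\int \widehat{\chi_\Omega}^{\,2}=(2\pi)^d|\Omega|<\infty$, so $\widehat{\chi_\Omega}\in L^1$. Hence $\widehat{\chi_\Omega}$ is a positive integrable function, and Fourier inversion gives
\[
\chi_\Omega(\xb)=(2\pi)^{-d}\int\widehat{\chi_\Omega}(\xib)\er^{-\ir\xib\cdot\xb}\dd\xib
\]
almost everywhere. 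The right-hand side is continuous. It is also positive definite and takes its maximum $(2\pi)^{-d}\|\widehat{\chi_\Omega}\|_1$ at $\xb=0$, with value $\chi_\Omega(0)$ after continuous modification. A continuous function equal a.e.\ to a $\{0,1\}$-valued function with finite positive measure of its support must be identically $1$ on a set that is both open and closed. That set is nonempty, so it is all of $\R^d$, contradicting $|\Omega|<\infty$.

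The main obstacle is justifying $\widehat{\chi_\Omega}\in L^1$ from positivity alone. I would argue this via the standard lemma that an $L^1$ function $f$ that is continuous at $0$ and has $\widehat f\ge0$ satisfies $\widehat f\in L^1$; this follows by integrating $\widehat f$ against a Gaussian approximate identity and using monotone convergence. The difficulty is that $\chi_\Omega$ need not be continuous at $0$. The fix is to apply the lemma to $h=\chi_\Omega*\chi_\Omega$, which is continuous and has $\widehat h=\widehat{\chi_\Omega}^{\,2}\ge0$. That only recovers Plancherel, though. So instead I would note directly that Plancherel already gives $\widehat{\chi_\Omega}\in L^2$, and that $\widehat{\chi_\Omega}>0$ implies
\[
\int\widehat{\chi_\Omega}(\xib)\er^{-\varepsilon|\xib|^2}\dd\xib=(\text{Gaussian average of }\chi_\Omega\text{ near }0)\cdot(2\pi)^d\le(2\pi)^d .
\]
This bound is uniform in $\varepsilon$, so monotone convergence gives $\widehat{\chi_\Omega}\in L^1$, and the continuity argument above concludes.
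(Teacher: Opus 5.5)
Your argument is the paper's proof once the detours are removed: continuity and connectedness force $\fhat{\Omega}>0$ everywhere. The heat-kernel identity $\int\fhat{\Omega}(\xib)\er^{-\varepsilon|\xib|^2}\dd\xib=(2\pi)^d(\chi_\Omega*G_\varepsilon)(0)\le(2\pi)^d$, combined with monotone convergence, then gives $\fhat{\Omega}\in L^1$. Finally, Fourier inversion yields a continuous representative of $\chi_\Omega$, which is impossible.

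The one outright error is the sentence in your ``key step'' paragraph that deduces $\fhat{\Omega}\in L^1$ from Plancherel. This inference is invalid because $L^2(\R^d)\not\subset L^1(\R^d)$. You correctly replace it with the Gaussian argument later, so delete that sentence, along with the unused slicing function $g$ and the $\chi_\Omega*\chi_\Omega$ detour.
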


Note that this proposition does not assume $\Omega$ to be convex. For the proof, see \Cref{ssec:ideas}.

The set $\mathcal{N}(\Omega)$  has been actively studied in the context of 
the Pompeiu problem and Schiffer's conjecture (see \cite{CS26, CLDP26} for most recent developments) as well as in relation to spectral estimates, see \cite{BLP09} and references therein. From the viewpoint of spectral geometry, of particular interest is the quantity
\[
\kappa(\Omega):=\operatorname{dist}(\mathcal{N}(\Omega),0).
\]
It is the distance from the origin to the real null variety of the Fourier transform of $\chi_\Omega$, which is finite by \Cref{prop:existence}. 

Let $\Omega^{\ast}$ be the Euclidean ball of the same volume as $\Omega$. 
It was  conjectured in \cite[Conjecture 2.2]{BLP09} that for every convex balanced $\Omega\subset\R^d$,
\begin{equation}\label{eq:BLP}
\kappa(\Omega)\le \kappa(\Omega^{\ast}),
\end{equation}
with equality only when $\Omega$ is a ball. 

With $s>0$, the substitution $\xb\mapsto s\xb$ gives
\begin{equation}\label{eq:scaleFT}
\fhat{s\Omega}(\xib)=s^{d}\,\fhat{\Omega}(s\xib),
\end{equation}
so $\mathcal N(s\Omega)=s^{-1}\mathcal N(\Omega)$ and
$\kappa(s\Omega)=s^{-1}\kappa(\Omega)$; since $(s\Omega)^{\ast}=s\Omega^{\ast}$,
inequality \eqref{eq:BLP} is scale-invariant.

Some evidence for this conjecture has been provided in two dimensions,
namely, its weaker version was proved in \cite[Theorem 2.4]{BLP09} with an additional multiplicative constant $2j_{0,1}/\jone\approx 1.2552$  in the right-hand side. Also, it was proved for star-shaped $C^2$ perturbations of the disk \cite[Theorem 2.7]{BLP09}. Inequality \eqref{eq:BLP} was also verified for cuboids in arbitrary dimension.

The goal of this paper is to  show that the conjectured inequality \eqref{eq:BLP} is false in all dimensions $d\ge 2$.
\subsection{Unboundedness of $\kappa$ in higher dimensions}
Let us start with the easier case $d\ge 3$. For any $\alpha>0$, consider a bipyramid, see \Cref{fig:bipyramid},
\begin{equation}\label{eq:bipyramid}
\Omega_{\alpha,d}:=\left\{(x,\yb)\in\R\times\R^{d-1}:\,|x|+\alpha|\yb|<1\right\}.
\end{equation}

\begin{figure}
\centering
\includegraphics[width=0.9\textwidth]{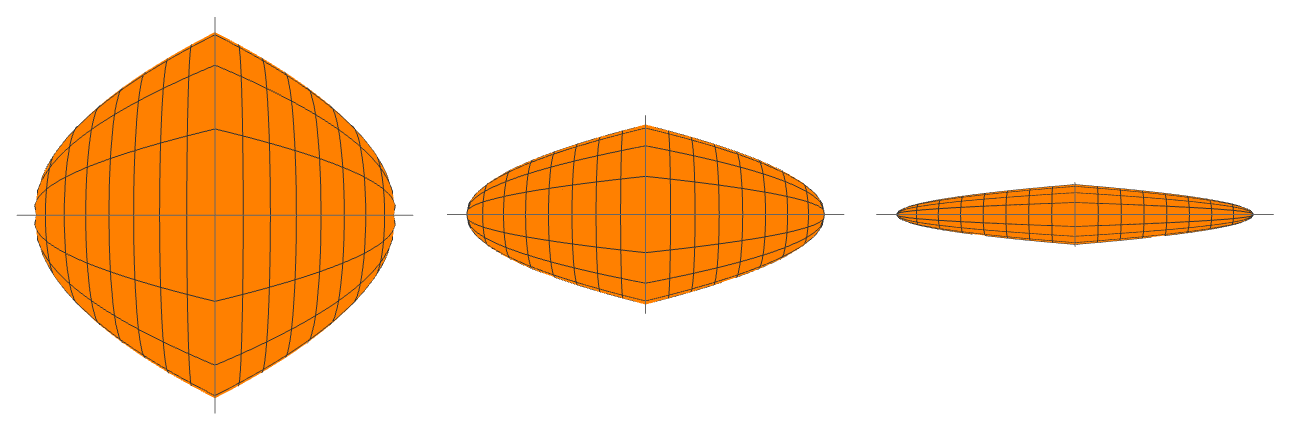}
\caption{The set $\Omega_{\alpha,d}$ for $d=3$ and varying values of $\alpha$, increasing from left to right.}
\label{fig:bipyramid}
\end{figure}

It is immediate that  $\Omega_{\alpha,d} \subset \R^d$ is a bounded convex balanced domain.

\begin{theorem}[Counterexamples for $d\ge 3$]\label{thm:highd}
There exists $\alpha_d^\ast>0$ such that
\begin{equation}\label{eq:highd_main}
\kappa\left(\Omega_{\alpha,d}\right)>\kappa\left(\Omega_{\alpha,d}^{\ast}\right)\qquad\text{for all }\alpha>\alpha_d^\ast.
\end{equation}
Moreover,  there exists a constant $C_d>0$ such that
\begin{equation}\label{eq:highd_rate}
\liminf_{\alpha\to +\infty} \frac{\kappa\left(\Omega_{\alpha,d}\right)}{\kappa\left(\Omega_{\alpha,d}^{\ast}\right) \alpha^{1/d}}\ge C_d.
\end{equation}
\end{theorem}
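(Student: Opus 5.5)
The plan is to reduce everything to the fixed double cone $\Omega_{1,d}$ by an anisotropic rescaling. Then I show that the zero set of its Fourier transform stays a positive distance away from the $\xi$-axis.

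\textbf{Step 1: rescaling.} Write $\xb=(x,\yb)$, $\xib=(\xi,\etab)$. Let $F(r)$ be the Fourier transform of the unit ball $B^{d-1}\subset\R^{d-1}$ at a frequency of modulus $r$; it is a real-analytic even function with $F(0)=|B^{d-1}|>0$. Slicing $\Omega_{\alpha,d}$ at fixed $x$ gives a ball of radius $(1-|x|)/\alpha$. Hence
\[
\fhat{\Omega_{\alpha,d}}(\xi,\etab)=\alpha^{-(d-1)}\,G\bigl(\xi,\etab/\alpha\bigr),\qquad
G(\xi,\zb):=2\int_0^1 (1-t)^{d-1}F\bigl((1-t)|\zb|\bigr)\cos(\xi t)\dd t ,
\]
and $G=\fhat{\Omega_{1,d}}$. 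Therefore $\N(\Omega_{\alpha,d})=\{(\xi,\alpha\zb):(\xi,\zb)\in\N(\Omega_{1,d})\}$. It then suffices to find $\delta>0$ such that $G(\xi,\zb)\neq0$ whenever $|\zb|<\delta$. Granted this, every zero of $\fhat{\Omega_{\alpha,d}}$ has $|\etab|\ge\alpha\delta$, so $\kappa(\Omega_{\alpha,d})\ge\alpha\delta$.

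\textbf{Step 2: no zeros near the $\xi$-axis (the main step).} Put $h_{\zb}(s):=s^{d-1}F(s|\zb|)$, so the integrand amplitude is $h_{\zb}(1-t)$. Integrate by parts twice in $\int_0^1 h_{\zb}(1-t)\cos(\xi t)\dd t$, using the antiderivative $(1-\cos\xi t)/\xi$ at the second step. The boundary terms vanish because $h_{\zb}(0)=0$ and $h_{\zb}'(0)=0$, the latter since $d\ge3$, and because $1-\cos 0=0$. This gives, for $\xi\ne0$,
\[
G(\xi,\zb)=\frac{2}{\xi^2}\int_0^1 h_{\zb}''(1-t)\,\bigl(1-\cos\xi t\bigr)\dd t .
\]
Since $F$ is even and smooth, $F(s|\zb|)=F(0)+O(s^2|\zb|^2)$ with derivatives controlled likewise. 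Hence
\[
h_{\zb}''(s)=(d-1)(d-2)F(0)s^{d-3}+O\bigl(|\zb|^2 s^{d-1}\bigr)
\]
uniformly for $s\in(0,1]$. For $d\ge3$ the leading term dominates, so there is $\delta>0$ with $h_{\zb}''>0$ on $(0,1]$ whenever $|\zb|<\delta$. Then the displayed integral is strictly positive for every $\xi\ne0$. At $\xi=0$ we have $G(0,\zb)=2\int_0^1 h_{\zb}(s)\dd s>0$ for small $|\zb|$. So $G$ has no zeros in $\{|\zb|<\delta\}$.

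This is where $d\ge3$ is genuinely used. For $d=2$ one gets $h''=0$ at $\zb=0$, i.e.\ the Fejér-type kernel with zeros on the axis. I expect this uniform positivity for all $\xi$ to be the delicate point. Compactness alone does not suffice, because $G(\xi,0)\to0$ as $\xi\to\infty$; the integration-by-parts representation is what handles large $\xi$ uniformly.

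\textbf{Step 3: comparison with the ball.} We have $|\Omega_{\alpha,d}|=c_d\,\alpha^{-(d-1)}$ with $c_d=2|B^{d-1}|/d$. So $\Omega^\ast_{\alpha,d}$ has radius $\rho_\alpha=(c_d/|B^d|)^{1/d}\alpha^{-(d-1)/d}$. By \eqref{eq:scaleFT}, $\kappa(\Omega^\ast_{\alpha,d})=\kappa(B^d)/\rho_\alpha=K_d\,\alpha^{(d-1)/d}$ for a constant $K_d$. (Explicitly $\kappa(B^d)=j_{d/2,1}$, though only its finiteness, from \Cref{prop:existence}, matters.) Combining with Step 1,
\[
\frac{\kappa(\Omega_{\alpha,d})}{\kappa(\Omega^\ast_{\alpha,d})\,\alpha^{1/d}}\ \ge\ \frac{\alpha\delta}{K_d\,\alpha^{(d-1)/d}\alpha^{1/d}}=\frac{\delta}{K_d}=:C_d ,
\]
which gives \eqref{eq:highd_rate}. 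Since also $\kappa(\Omega_{\alpha,d})/\kappa(\Omega^\ast_{\alpha,d})\ge C_d\alpha^{1/d}$, this ratio exceeds $1$ as soon as $\alpha>\alpha_d^\ast:=C_d^{-d}$, which proves \eqref{eq:highd_main}.
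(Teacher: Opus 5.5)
Your proof is correct. It reaches the off-axis positivity (the paper's Lemma~\ref{lem:offaxis}) by a noticeably shorter route; the rescaling and the comparison with the ball are the same as in the paper.

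The paper in fact derives your identity in disguise. Its formula \eqref{eq:u2F} reads $u^2F_\ell(u,s)=-2f_s'(0)-2\int_0^1 f_s''(x)\cos(ux)\dd x$, where $f_s(x)=h_{\zb}(1-x)$ for $s=|\zb|$ in your notation. Since $-f_s'(0)=\int_0^1 f_s''$, this equals $2\int_0^1 f_s''(x)\,(1-\cos ux)\dd x$. The paper never uses the sign of $f_s''$, however. Instead it splits into two regimes:
\begin{itemize}
\item $|u|\ge U_d$: handled by a uniform Riemann--Lebesgue bound from a third integration by parts (with an extra boundary term when $d=3$), together with continuity of $-f_s'(0)$ in $s$;
\item $|u|\le U_d$: handled by the axial positivity of Lemma~\ref{lem:axial}, joint continuity and compactness.
\end{itemize}

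Your observation is that $h_{\zb}''$ is a small perturbation of $(d-1)(d-2)\omega_{d-1}s^{d-3}>0$. This settles all $\xi$ at once and treats $d=3$ and $d\ge4$ uniformly. It contains Lemma~\ref{lem:axial} as the case $\zb=0$: the recurrence \eqref{eq:Im_rec} is exactly your identity with $h_0''=\ell(\ell-1)\omega_\ell s^{\ell-2}$. It also yields an explicit $\delta$ from elementary Bessel bounds, much more directly than the three-step programme of Remark~\ref{rem:quant_sigma}.

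What the paper's perturbative argument buys in exchange is robustness. It needs only positivity of the axial trace and of the boundary term $-f_s'(0)$, not pointwise convexity of $h_{\zb}$. So it would survive for profiles whose second derivative changes sign.
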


As an immediate consequence of \eqref{eq:highd_rate} we obtain
\begin{corollary}\label{cor:diverge}
In dimensions $d \ge 3$, 
\[
\sup_{\substack{\Omega \subset \mathbb{R}^d \\ |\Omega|=V}} \kappa(\Omega)=+\infty, 
\]
where the supremum is taken over all bounded convex balanced sets of fixed volume $V>0$.
\end{corollary}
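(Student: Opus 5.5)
The corollary follows by rescaling the bipyramids $\Omega_{\alpha,d}$ of \Cref{thm:highd} to volume $V$ and invoking \eqref{eq:highd_rate}. The point is that the ratio in \eqref{eq:highd_rate} is scale-invariant, so it controls $\kappa$ of the rescaled bodies directly.

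Fix $d\ge 3$ and $V>0$. For $\alpha>0$, let $s_\alpha>0$ be the unique scaling factor with $|s_\alpha\Omega_{\alpha,d}|=V$, namely $s_\alpha=(V/|\Omega_{\alpha,d}|)^{1/d}$, and set $\widetilde\Omega_\alpha:=s_\alpha\Omega_{\alpha,d}$. Dilations preserve convexity, boundedness and central symmetry, so $\widetilde\Omega_\alpha$ is admissible for the supremum. Every $\widetilde\Omega_\alpha$ has volume $V$, so $\widetilde\Omega_\alpha^{\ast}=B_V$, the ball of volume $V$ centred at the origin, independently of $\alpha$. This gives a fixed finite constant $\kappa(B_V)$, which is positive: near $\xib=0$ we have $\fhat{B_V}(\xib)\to V>0$ by continuity, so $\mathcal N(B_V)$ stays away from the origin.

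By \eqref{eq:scaleFT}, $\kappa(s\Omega)=s^{-1}\kappa(\Omega)$, and $(s\Omega)^\ast=s\Omega^\ast$. Hence
\[
\frac{\kappa(\widetilde\Omega_\alpha)}{\kappa(B_V)}=\frac{\kappa(\widetilde\Omega_\alpha)}{\kappa(\widetilde\Omega_\alpha^{\ast})}=\frac{\kappa(\Omega_{\alpha,d})}{\kappa(\Omega_{\alpha,d}^{\ast})}.
\]
By \eqref{eq:highd_rate}, for all sufficiently large $\alpha$ the right-hand side is at least $\tfrac{C_d}{2}\alpha^{1/d}$. Therefore $\kappa(\widetilde\Omega_\alpha)\ge \tfrac{C_d}{2}\,\kappa(B_V)\,\alpha^{1/d}\to+\infty$ as $\alpha\to+\infty$. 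Since the supremum dominates $\kappa(\widetilde\Omega_\alpha)$ for every $\alpha$, it equals $+\infty$.

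None of these steps is hard; the content lies entirely in \Cref{thm:highd}. The only points to verify are scale-invariance of the ratio, which follows from \eqref{eq:scaleFT}, and that $\kappa(B_V)$ is a fixed positive finite number. Finiteness comes from \Cref{prop:existence} and positivity from the continuity argument above.
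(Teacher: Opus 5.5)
Your proposal is correct and is the argument the paper has in mind when it calls the corollary an immediate consequence of \eqref{eq:highd_rate}. You rescale $\Omega_{\alpha,d}$ to volume $V$, use the scale invariance of $\kappa(\Omega)/\kappa(\Omega^{\ast})$ coming from \eqref{eq:scaleFT}, and let $\alpha\to+\infty$.
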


\begin{remark} 
The construction of the domains $\Omega_{\alpha,d}$ giving a counterexample to \eqref{eq:BLP}  is  inspired by \cite[Remark 4.11]{BLP09}. We note that this construction only works for $d\ge 3$, see \Cref{rem:worksd3}.
\end{remark}

\subsection{Two dimensions: regular polygons vs the disk}\label{ssec:planar-vs-disk}
Consider now the case $d=2$. It follows from the results of \cite{BLP09}
that 
\[
\sup_{\substack{\Omega \subset \mathbb{R}^2 \\ |\Omega|=A}} \kappa(\Omega) < +\infty,
\]
where the supremum is taken over all bounded convex balanced sets of given area $A>0$. Moreover, as was shown in \cite{RyadovkinFilonov2014}, there exists a domain attaining the supremum. 

Denote by $P_{2n}$, $n\ge 2$,  a regular $2n$-gon of area $\pi$ centred at the origin. Also, let $\mathbb{D}$ be the unit disk centred at the origin. As was shown in \cite{BLP09},  $\kappa(\D)=\jone\approx 3.831705970208$, where $\jone$ is the first positive zero of the Bessel function of the first kind $J_{1}$.

Numerical computation gives the following approximate values of $\kappa(P_{2n})-\jone$:
\[
\begin{array}{c|c}
\toprule
n & \kappa(P_{2n})-\jone\\\midrule
2 & -2.867983\times 10^{-1}\\
3 & -2.245474\times 10^{-2}\\
4 & -8.667777\times 10^{-4}\\
5 & -5.399272\times 10^{-6}\\
6 & +6.273756\times 10^{-6}\\
7 & +2.644535\times 10^{-6}\\
8 & +1.177537\times 10^{-6}\\
9 & +5.766065\times 10^{-7}\\
10 & +3.048199\times 10^{-7}\\
\bottomrule
\end{array}
\]
We note that the difference between the values of $\kappa(P_{2n})$ and 
$\kappa(\D)$ is very small, and without a high-precision numerical calculation it would not have been possible to detect that $\kappa(P_{2n}) > \kappa(\D)$ for any $n \ge 6$. Our next  theorem makes this observation rigorous.

\begin{theorem}\label{thm:monot}
The following inequalities hold for any $n\ge 6$:
\begin{equation}\label{eq:monotonicity}
\kappa(P_{2n}) > \kappa(P_{2n+2}) > \kappa(\D) > \kappa(P_{10}) >\kappa(P_8) > \kappa(P_6)> \kappa(P_4).
\end{equation}
\end{theorem}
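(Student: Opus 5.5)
Our approach is to reduce $\kappa(P_{2n})$ to a one-variable problem using the dihedral symmetry of the polygon. We then handle finitely many $n$ by validated interval arithmetic, and all large $n$ by an asymptotic expansion around the disk.

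\textbf{Step 1: explicit Fourier transform.} Let $P_{2n}$ have vertices $R_n\er^{\ir\pi k/n}$ with $R_n$ fixed by area $\pi$. By the divergence theorem (or the standard formula for the Fourier transform of a polygon), $\fhat{P_{2n}}(\xib)$ is a finite sum over edges of elementary terms $\frac{\sin(\cdot)}{\cdot}\cos(\cdot)$ divided by $|\xib|^2$. We write $\xib=r(\cos\theta,\sin\theta)$. By symmetry it suffices to take $\theta\in[0,\pi/(2n)]$. Then
\[
\kappa(P_{2n})=\min_{\theta\in[0,\pi/(2n)]} r_1(\theta),
\]
where $r_1(\theta)$ is the first positive zero of $r\mapsto \fhat{P_{2n}}(r,\theta)$. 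This requires $\fhat{P_{2n}}>0$ inside the first nodal curve, so that the zero set nearest the origin is a star-shaped curve. We justify this near the disk by comparison with $2\pi J_1(r)/r$.

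\textbf{Step 2: finitely many $n$ rigorously.} For $2\le n\le N_0$, with $N_0$ to be fixed in Step 3, we enclose $r_1(\theta)$ with interval arithmetic. On a fine partition of $[0,\pi/(2n)]$ we use interval Newton in $r$ on a window around $\jone$. We first verify that there are no zeros for $r$ below the window, using a positivity bound such as $\fhat{}\ge$ the Taylor polynomial minus a remainder. We also bound $\partial_\theta\fhat{}$ to control the gaps between sample angles. This certifies each $\kappa(P_{2n})$ to an accuracy far better than the gaps in the table (of order $10^{-7}$ at $n=10$), which yields every inequality involving $n\le N_0$ and $\jone$.

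\textbf{Step 3: large $n$ asymptotics (main obstacle).} For $n>N_0$ we need strict monotonicity and $\kappa(P_{2n})>\jone$ uniformly. We expand $\chi_{P_{2n}}-\chi_{R_n\D}$ in polar angular Fourier modes. Only the modes $e^{\ir 2nk\theta}$ survive. The radial boundary function $\rho_n(\phi)=R_n\cos(\pi/2n)/\cos(\phi)$ on $|\phi|\le\pi/2n$ gives $\fhat{P_{2n}}(r,\theta)=\frac{2\pi J_1(rR_n)R_n}{r}\bigl(1+\ldots\bigr)$ plus corrections $\sum_k c_k J_{2nk}(\cdot)\cos(2nk\theta)$. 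Here the angular-averaged part differs from the disk of area $\pi$ only at order $n^{-4}$, with an explicit coefficient. The oscillating modes involve $J_{2n}(r)$ evaluated near $r\approx\jone$, and these are super-exponentially small, roughly $(e\jone/4n)^{2n}$. We then perturb the zero: $\kappa(P_{2n})=\jone+a\,n^{-4}+b\,n^{-6}+O(n^{-8})$, where $a>0$ must be computed from the second-order radial moment correction. The sign of the leading coefficient explains why the difference eventually becomes positive and decreasing. The hard part is making all the remainders explicit, with constants good enough that the expansion is valid from a modest $N_0$ (say $N_0\approx 20$) on, so that Step 2 covers the rest. We expect the order-$n^{-2}$ term to cancel because of the area normalisation. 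This cancellation needs to be checked carefully, since if it fails the leading term would have a different order. The monotonicity $\kappa(P_{2n})>\kappa(P_{2n+2})$ then follows from $a n^{-4}-a(n+1)^{-4}\gtrsim n^{-5}$ dominating the explicit remainder bound.
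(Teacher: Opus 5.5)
\textbf{The gap is in Step 3, in its central claim: the coefficient $a$ of $n^{-4}$ is not positive, it is exactly zero.}

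Write the angular mean as $a_0(\rho)=\int_0^{2\pi}g_\rho(\mathcal{R}(\theta))\dd\theta$ with $g_\rho(s)=sJ_1(\rho s)/\rho$, and set $h_n=\mathcal{R}-1=O(n^{-2})$. You are right that $a_0-D$ is of order $n^{-4}$ as a function of $\rho$. However, its $n^{-4}$ part is $-\frac{\rho J_1(\rho)}{2}\int_0^{2\pi}h_n^{2}\dd\theta$, which vanishes at the very point $\rho=\jone$ you are perturbing. Equivalently, since $J_1(\jone)=0$ one has $g_{\jone}'(1)=g_{\jone}''(1)=J_0(\jone)$, hence
\[
g_{\jone}(1+h)=J_0(\jone)\Bigl(h+\frac{h^{2}}{2}\Bigr)-\frac{\jone^{2}J_0(\jone)}{6}\,h^{3}+O(h^{4}).
\]
The area identity $\int_0^{2\pi}(h_n+h_n^{2}/2)\dd\theta=0$ therefore cancels the first \emph{two} orders exactly, not just the first. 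The leading term comes from $\int_0^{2\pi}h_n^{3}\dd\theta\sim\pi^{7}/(15120\,n^{6})$. It gives $\kappa(P_{2n})=\jone+C_0n^{-6}+O(n^{-8})$ with $C_0=\jone^{3}\pi^{6}/181440\approx0.298$. The table in the introduction already shows this scaling: $(\kappa(P_{18})-\jone)/(\kappa(P_{20})-\jone)\approx1.89\approx(10/9)^{6}$, whereas $(10/9)^{4}\approx1.52$.

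\textbf{Consequences for the large-$n$ argument.} Your monotonicity argument via $an^{-4}-a(n+1)^{-4}\gtrsim n^{-5}$ has nothing to stand on. The consecutive gaps are only about $6C_0n^{-7}$, roughly $1.4\cdot10^{-9}$ at $n=20$. You therefore need a fully explicit remainder of order $n^{-8}$ whose constant beats $6C_0n^{-7}$ from $N_0$ onwards; an $O(n^{-6})$ remainder is useless. This requires:
\begin{itemize}
\item a second-order profile expansion $h_n(u/n)=n^{-2}(u^{2}/2-\pi^{2}/24)+O(n^{-4})$ with an explicit constant;
\item explicit bounds on $\int h_n^{3}$ and $\int h_n^{4}$, and on $g_{\jone}''''$ near $1$;
\item an explicit slope estimate at $\jone$ together with a uniform lower bound on the slope near $\jone$;
\item positivity of $\fhat{P_{2n}}$ below the zero.
\end{itemize}
This is what \Cref{prop:qvalue}, \Cref{lem:slope} and \Cref{thm:effective} provide, yielding $|\kappa(P_{2n})-\jone-C_0n^{-6}|\le4n^{-8}$, which suffices from $n\ge9$.

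\textbf{On Steps 1--2.} These are a legitimate but heavier alternative to the paper's route. The paper first proves (\Cref{thm:regular_polygon_active_direction}) that the minimising directions are the side midpoints for $n$ even and the vertices for $n$ odd, so that only one-variable zeros need certifying. For large $n$ you are right that the uniform bound on the angular modes makes directional information unnecessary. Note, however, that near $N_0\approx20$ your enclosures must be accurate to about $10^{-10}$, not $10^{-7}$.
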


\Cref{thm:monot} suggests a new candidate for the maximising domain for the functional $\kappa$.

\begin{conjecture}\label{conj:dodecagon}
Does the regular dodecagon $P_{12}$ maximise the functional $\kappa$ among all convex balanced planar domains of area $\pi$?
\end{conjecture}

\subsection{Ideas of the proofs}\label{ssec:ideas}

We start with a short
\begin{proof}[Proof of \Cref{prop:existence}]
Set $f:=\chi_\Omega$ and suppose, to the contrary, that $\mathcal N(\Omega)=\varnothing$.
Then, since  $\widehat{\chi_\Omega}(0)=|\Omega|>0$, it follows that $\widehat f(\xib) > 0$ for every $\xib\in\R^d$. Let
\[
G_t(x):=(4\pi t)^{-d/2}e^{-|x|^2/(4t)}, \qquad t>0.
\]
Since $\left|\widehat f(\xib)\right|\le |\Omega|$ for all $\xib\in\mathbb{R}^d$, $\widehat f(\xib)e^{-t|\xib|^2}\in L^1(\R^d)$, and Fourier inversion gives
\[
(2\pi)^{-d}\int_{\R^d}\widehat f(\xib)e^{-t|\xib|^2}\,\dr\xib
=(f*G_t)(0)\leq1.
\]
As the integrand is nonnegative, monotone convergence as $t\downarrow0$ yields
$\widehat f\in L^1(\R^d)$. The Fourier inversion theorem therefore implies that $f$ agrees almost everywhere with a continuous function  on $\R^d$,  which is impossible since $f$ is a characteristic function of a set of finite positive Lebesgue measure.
\end{proof}

The proof of \Cref{thm:highd} exploits the strong anisotropy of the bipyramids $\Omega_{\alpha,d}$ defined by \eqref{eq:bipyramid}. Writing
\[
\xib=(u,\etab)\in \mathbb R\times\mathbb R^{d-1},
\]
and slicing perpendicular to the distinguished $x$-axis, one obtains the Fourier transform representation \eqref{eq:fhat_alphad}, namely
\[
\widehat{\chi_{\Omega_{\alpha,d}}}(u,\etab) = \alpha^{-(d-1)} F_{d-1}\left(u,\frac{|\etab|}{\alpha}\right),
\]
for some two-variable function $F_{d-1}(u,s)$. Thus real zeros of $\widehat{\chi_{\Omega_{\alpha,d}}}$ coincide with the real zeros of $F_{d-1}$. The key point is that, for $d\geq 3$, there are no zeros on the long symmetry axis: indeed, by \Cref{lem:axial},
\[
F_{d-1}(u,0)>0 \qquad \text{for all } u\in\mathbb R.
\]
This positivity is then extended away from the axis by \Cref{lem:offaxis}: there exists $\sigma_d>0$ such that
\[
F_{d-1}(u,s)>0 \qquad \text{for all } u\in\mathbb R \text{ and all } 0\leq s\leq \sigma_d.
\]
Consequently every real zero must satisfy
\[
|\etab|>\alpha\sigma_d,
\]
which gives the lower bound $\kappa(\Omega_{\alpha,d}) \ge \alpha \sigma_d$. On the other hand,
\[
\kappa(\Omega_{\alpha,d}^{\ast}) = c_d\alpha^{(d-1)/d}.
\]
Comparing these two bounds yields \Cref{thm:highd}.

The proof of \Cref{thm:monot} uses the fact that for regular polygons $P_{2n}$, the location of the first zero of $\widehat{\chi_{P_{2n}}}$ can be determined exactly.
Namely, if $n$ is odd, then the zeros of $\fhat{P_{2n}}$ nearest to the origin lie on rays through the vertices of $P_{2n}$, and
if $n$ is even, then they lie on rays through the side midpoints.

We fix some additional notation.
We write $\ev\phi:=(\cos\phi,\sin\phi)\in\mathbb{S}^1$ for the unit vector pointing in the direction
$\phi$, both in the domain space and in the Fourier space, and use the polar
representations $\xb=r\ev\theta$ ($r=|\xb|\ge0$) and $\xib=\rho\ev\phi$ ($\rho=|\xib|\ge
0$).  We denote, for a unit vector $\ev{}$, the first positive zero of $\rho\mapsto\fhat{\Omega}(\rho\,\ev{})$ by  $\kappa_1(\ev{})=\kappa_1(\ev{};\Omega)$; thus, 
\begin{equation}\label{eq:kappa_as_min_over_e}
\kappa(\Omega)=\min_{\ev{}\in\mathbb{S}^1} \kappa_1(\ev{};\Omega).
\end{equation}
Here $\kappa_1(\ev{})$ is finite for every $\ev{}$ by \eqref{eq:BLP48} below, and the
infimum is attained because $\N(\Omega)$ is a nonempty closed set.
We call $\ev{}\in \mathbb{S}^1$ a \emph{minimising direction}
if $\kappa(\Omega)\ev{}\in\N(\Omega)$, i.e.\ if
$\kappa_1(\ev{}; \Omega)=\kappa(\Omega)$.

Let $n \geq 2$. We position the regular polygon $P_{2n}$, centred at the origin, in such a way that $\ev{0}$ is orthogonal to one of its sides, see \Cref{fig:poly}.
Set
\[
\delta:=\frac{\pi}{2n},
\]
and denote by
\begin{equation}\label{eq:VM}
\mathcal{V}:=\left\{\ev{(2j-1)\delta},\ j=1,\dots,2n\right\},
\qquad
\mathcal{M}:=\left\{\ev{2j\delta},\ j=0,\dots,2n-1\right\},
\end{equation}
the sets of \emph{vertex directions} and of \emph{midpoint directions}, respectively: these are the directions of the vertices and of the side midpoints of $P_{2n}$, and, being sets of directions, they are unchanged under scaling.

\begin{theorem}\label{thm:regular_polygon_active_direction}
For the regular polygon $P_{2n}$, the set of minimising directions is the set $\mathcal{M}$ of side midpoint directions if $n$ is even, and the set $\mathcal{V}$ of vertex directions if $n$ is odd.
\end{theorem}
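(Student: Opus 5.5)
We will obtain an explicit formula for $\fhat{P_{2n}}$ along each ray and then compare the first zeros in different directions. By the divergence theorem (Green's formula), for $\xib\neq 0$,
\[
\fhat{\Omega}(\xib)=\frac{1}{\ir|\xib|^2}\int_{\partial\Omega}\er^{\ir\xib\cdot\xb}\,(\xib\cdot\nj{})\dd s .
\]
Summing over the $2n$ sides of $P_{2n}$, and pairing opposite sides by central symmetry, gives a closed form. Let $h$ be the apothem and $\ell=2h\tan\delta$ the side length, with outer normals $\ev{2j\delta}$. The contribution of the side with normal $\ev{2j\delta}$ involves $\sin(\rho h\cos(\phi-2j\delta))$ times a sinc factor in $\rho\ell\sin(\phi-2j\delta)/2$. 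Equivalently, after a short computation over vertices (the standard formula for polygons),
\[
\fhat{P_{2n}}(\rho\ev\phi)=\frac{1}{\rho^2}\sum_{j}c_j(\phi)\cos\bigl(\rho\, \ev\phi\cdot \mathbf v_j\bigr),
\]
a finite trigonometric sum in $\rho$ whose frequencies are the projections of the vertices $\mathbf v_j$ onto $\ev\phi$. By the dihedral symmetry of order $4n$ we may restrict to $\phi\in[0,\delta]$, where $\phi=0$ is a midpoint direction and $\phi=\delta$ is a vertex direction.

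Next we localise. From the Bessel bound for the disk and the inclusions $r\D\subset P_{2n}\subset R\D$, together with the estimate cited as \eqref{eq:BLP48}, all first zeros $\kappa_1(\ev\phi)$ lie in a small window around $\jone$. We then expand around the disk. Writing $\chi_{P_{2n}}$ in polar form with boundary $r=h/\cos(\theta-2j\delta)$, the Fourier transform is
\[
\fhat{P_{2n}}(\rho\ev\phi)=\sum_{m\in 4n\mathbb Z}(\text{radial term})_m\, \er^{\ir m\phi},
\]
since only angular harmonics divisible by $4n$ survive (the polygon has $4n$-fold dihedral symmetry with reflections, and central symmetry is already included). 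The leading angular dependence is therefore $a_{4n}(\rho)\cos(4n\phi)$ plus lower order terms. At $\rho\approx\jone$, the zero is shifted by an amount proportional to $a_{4n}(\jone)\cos(4n\phi)/\partial_\rho a_0(\jone)$. The first zero is smallest where $\operatorname{sign}(a_{4n}(\jone))\cos(4n\phi)$ is extremal, that is, at $\phi=0$ (midpoints) or $\phi=\delta$ (vertices), according to the sign of $a_{4n}(\jone)$. We expect $a_{4n}(\rho)$ to behave like $\beta_n J_{4n}$-type terms with $\beta_n$ carrying a factor $(-1)^n$. This factor comes from the expansion of $\sec^{k}$ on the edges, in terms of $\cos(4n\theta)$ combined with $\er^{\ir m(\theta-\phi)}$, and that sign alternation is exactly what would produce the even/odd dichotomy.

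The main obstacle is rigour. The harmonic expansion gives the result only to leading order, and we must show that for every $\phi\in(0,\delta)$ the first zero strictly exceeds the endpoint value, uniformly in $n$. We would split into two regimes. For $n\ge n_0$ we bound the tail harmonics $m=\pm 8n,\dots$ explicitly using $|J_m(\rho)|\le(\rho/2)^m/m!$, so that the $4n$ term dominates. For $6\le n<n_0$ and $n\in\{2,3,4,5\}$ we verify the claim by validated interval arithmetic on the explicit trigonometric formula. This check covers three things: monotonicity of $\phi\mapsto\kappa_1(\ev\phi)$ on $[0,\delta]$, established via the sign of $\partial_\phi\fhat{}/\partial_\rho\fhat{}$ at the zero curve obtained from the implicit function theorem; simplicity of the zero; and the absence of earlier zeros. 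A residual difficulty is that $\kappa_1(\ev\phi)$ varies only by about $10^{-6}$ across $[0,\delta]$, so the enclosures must be very tight, and the implicit function argument near the endpoints needs a second-derivative sign check, since $\partial_\phi$ vanishes there by symmetry.
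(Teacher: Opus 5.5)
There is a genuine gap, and it sits exactly at the step that should produce the even/odd dichotomy.

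\textbf{The leading harmonic is wrong.} The symmetry group of $P_{2n}$ has order $4n$, but it contains only $2n$ rotations, by multiples of $2\delta=\pi/n$. So $\phi\mapsto\fhat{P_{2n}}(\rho\ev\phi)$ has period $\pi/n$, and the surviving harmonics are $\cos(2nm\phi)$, not $m\in4n\mathbb{Z}$. This is not cosmetic. $\cos(4n\phi)$ equals $1$ at both $\phi=0$ and $\phi=\delta$, so your leading term cannot distinguish midpoints from vertices at all. Yet already for the square the two directions have first zeros $2\sqrt\pi$ and $2\sqrt{2\pi}$. The correct leading term, from Jacobi--Anger, is $2(-1)^n\widetilde b_1(\rho)\cos(2n\phi)$ with $\widetilde b_1(\rho)=\int_{\widetilde P_{2n}}J_{2n}(\rho|\xb|)\cos(2n\theta)\dd\xb$, in the circumradius-one normalisation. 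The factor $(-1)^n$ is the phase $\ir^{2n}$; it does not come from expanding $\sec^k$ on the edges.

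\textbf{The sign of the leading coefficient is not established.} That $\widetilde b_1$ has a definite sign, with a lower bound large enough to beat the tail, is only \emph{expected} in your proposal. Without it, your tail bound via $|J_m(x)|\le(x/2)^m/m!$ does no work. The paper proves it by integrating radially first and folding the period cell about $\delta/2$:
\[
-\widetilde b_1(\rho)=4n\int_0^{\delta/2}\Bigl(\mathcal T_\rho\bigl(\widetilde{\mathcal R}(\tfrac\delta2+v)\bigr)-\mathcal T_\rho\bigl(\widetilde{\mathcal R}(\tfrac\delta2-v)\bigr)\Bigr)\sin(2nv)\dd v,
\qquad
\mathcal T_\rho(s)=\int_0^sJ_{2n}(\rho r)\,r\dd r .
\]
This is positive because $\widetilde{\mathcal R}$ increases from midpoint to vertex, and $\mathcal T_\rho$ is increasing since $\rho\le2\pi<j_{2n,1}$. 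Restricting to $v\in[\delta/4,\delta/2]$ gives an explicit lower bound proportional to $(\rho/2)^{2n}/(2n)!$. That bound dominates $\sum_{m\ge2}m^2|\widetilde b_m(\rho)|\le 4.01\cdot\frac{2\pi}{4n+2}\cdot\frac{(\rho/2)^{4n}}{(4n)!}$.

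\textbf{The localisation is invalid, and the comparison route is unnecessary.} There is no inclusion monotonicity for first zeros along rays. For the square of area $\pi$, the diagonal first zero $2\sqrt{2\pi}\approx5.01$ exceeds $\jone/A_2\approx4.32$, the value for the inscribed disk. More importantly, you do not need to compare $\kappa_1(\ev\phi)$ across directions at all. That route requires the implicit function theorem, simplicity of zeros, endpoint second-derivative checks, and enclosures resolving differences of order $10^{-6}$ that shrink superexponentially in $n$.

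The missing reduction is this. At $\rho=\kappa(\Omega)$, the function $\phi\mapsto\fhat{\Omega}(\kappa(\Omega)\ev\phi)$ is nonnegative, because there are no zeros inside the open disk. It vanishes exactly in the minimising directions, so these are precisely its global minimisers at that single radius. For $\widetilde P_{2n}$ we have $\widetilde\kappa\le2\pi$ by \eqref{eq:BLP48} in a vertex direction. So it suffices to show, for every $\rho\in(0,2\pi]$, that the global minimisers of the angular function are the midpoints ($n$ even) or the vertices ($n$ odd).

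Set $\psi=2n\phi$ and $\widetilde c_m=2(-1)^{nm}\widetilde b_m$. The derivative in $\psi$ is $-\widetilde c_1\sin\psi-\sum_{m\ge2}m\widetilde c_m\sin m\psi$. Since $|\sin m\psi|\le m\sin\psi$ and $\sum_{m\ge2}m^2|\widetilde b_m|<-\widetilde b_1$, it has the strict sign $(-1)^n$ on $(0,\pi)$. This compares coefficients with a wide margin, uniformly in $n\ge3$. Only $n=3,4,5$ need a few certified elementary constants, and $n=2$ is explicit.
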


\begin{example}\label{ex:square}
\Cref{thm:regular_polygon_active_direction} holds for $n=2$ as follows from an explicit calculation. Here $\delta=\frac{\pi}{4}$, and the square of area $\pi$ is
\[
Q=P_4=\left\{(x_1,x_2):\ |x_1|<s,\ |x_2|<s\right\},
\qquad
s=\frac{\sqrt\pi}{2},
\]
with the sides perpendicular to the axes. Then, with $\xib=(\xi_1,\xi_2)$,
\begin{equation}\label{eq:Fsquare}
\fhat{Q}(\xib)=
\begin{cases}
\frac{4\sin(s\xi_1)\sin(s\xi_2)}{\xi_1\xi_2}\qquad&\text{if }\xi_1,\xi_2\ne 0,\\
2s\cdot\frac{2\sin(s\xi_j)}{\xi_j}\qquad&\text{if }\xi_i=0\text{ and }\xi_j\ne 0, i,j=1,2,\\
4s^2\qquad&\text{if }\xi_1=\xi_2=0.
\end{cases}
\end{equation}
Thus, 
\[
\N(Q)=\left\{\xib:\ |\xi_1|\in\frac{\pi}{s}\mathbb{N}\right\}\cup
\left\{\xib:\ |\xi_2|\in\frac{\pi}{s}\mathbb{N}\right\},
\]
and $\kappa(Q)=\pi/s=2\sqrt\pi=3.5449\ldots$, attained exactly at the four points
$\pm\frac{\pi}{s}\ev0$, $\pm\frac{\pi}{s}\ev{\pi/2}$, i.e.\ exactly in the four
midpoint directions. This is the two-dimensional case of \cite[Example~3.6]{BLP09}.
Since $n=2$ is even, this is precisely the assertion of
\Cref{thm:regular_polygon_active_direction}.
\end{example}

\Cref{thm:regular_polygon_active_direction} reduces the proof of \Cref{thm:monot} to comparing zeros that are given explicitly.
Even though they only hold with small margins, the comparisons 
\[
\kappa(P_{12}) > \kappa(\D) > \kappa(P_{10}) >\kappa(P_8) > \kappa(P_6)> \kappa(P_4)
\]
become straightforward.
The infinite chain of comparisons $\kappa(P_{2n}) > \kappa(P_{2n+2})$ for $n \geq 6$ requires an asymptotic estimate of the form $\kappa(P_{2n})=j_{1,1}+C n^{-6}+O(n^{-8})$ for an explicit constant $C$ and an explicit bound on the $O(n^{-8})$-term.
Again, thanks to \Cref{thm:regular_polygon_active_direction}, we obtain an explicit expression for $\kappa(P_{2n})$ and use standard estimates together with certified numerics to bound an infinite tail in this expression.

\subsection{A note on computer-assisted proofs}\label{ssec:comp}

In this paper, several  results, including \Cref{thm:monot,thm:effective}, have some computer-assisted component. In recent years, these techniques have been more widely successful in mathematics. In the context of spectral problems, similar computational techniques were very recently applied in 
\cite{Filonov-Levitin-Polterovich-Sher:polya-conjecture-balls,Filonov-Levitin-Polterovich-Sher:polya-conjecture-higher-dimensional-neumann-balls,GHLZ2026,DGP2026} where rigorous asymptotic analysis of spectral properties was performed. We refer to the survey~\cite{GomezSerrano:survey-cap-in-pde} for a more specific
treatment of computer-assisted proofs in PDEs.

The main idea is to substitute
floating point computations by rigorous upper and lower bounds, which are then propagated through every computer instruction and augmented if necessary by
taking into account any error made throughout the process. In our
concrete case, we will use computer-assisted verification specifically in the proof of \Cref{prop:finite-kappa-comparison,prop:bridge}, and \Cref{lem:certified,lem:elemseries}. All of these verified intervals refer to a finite number of evaluations of expressions involving special functions. In all cases, the implementation was either direct or via a branch and bound method. The implementation is described in \Cref{ssec:certified}.

\subsection{Organisation of the paper} 

In \Cref{section:unboundedness-in-dim-geq-3} we give a proof of \eqref{eq:highd_main}, in particular constructing counterexamples to \cite[Conjecture 2.2]{BLP09} in dimension $d \geq 3$.
In \Cref{section:nearest-zero} we prove the crucial property that the zeros of $\widehat{\chi_{P_{2n}}}$ nearest to the origin occur at rays through vertices or side midpoints, depending on the parity of $n$, see \Cref{thm:regular_polygon_active_direction}.
We then use this property to prove another of our main results:
in \Cref{section:regular-polygons-in-the-plane}, we prove a statement about the values of $\kappa(P_{2n})$, in particular proving that the $12$-gon is a counterexample to \cite[Conjecture 2.2]{BLP09} in dimension $d=2$, see \Cref{thm:monot}.
\Cref{app:num,app:bessel} contain the proof of \eqref{eq:master}, the certified constants and elementary bounds used in \Cref{section:regular-polygons-in-the-plane}, and a toolbox of Bessel function estimates.

\section{Unboundedness of $\kappa$ in dimension $d \geq 3$}\label{section:unboundedness-in-dim-geq-3}

Throughout this section we fix $d\ge 3$, write $\ell:=d-1\ge 2$, and use coordinates $\xib=(u,\etab)\in\R\times\R^{d-1}$.
The set $\Omega_{\alpha,d}$ defined by \eqref{eq:bipyramid} is open, centrally symmetric, and convex (sublevel set of the norm $(x,\yb)\mapsto |x|+\alpha|\yb|$). Slicing perpendicular to the first coordinate axis, the cross-section at height $x\in(-1,1)$ is the $(d-1)$-ball of radius $(1-|x|)/\alpha$. Hence
\begin{equation}\label{eq:vol}
\left|\Omega_{\alpha,d}\right|
= 2\int_0^1 \omega_{\ell}\left(\frac{1-x}{\alpha}\right)^{\ell}\!\dd x
= \frac{2\omega_{\ell}}{d}\,\alpha^{-\ell},
\end{equation}
where $\omega_\ell$ is the volume of the unit ball $B_\ell$. Therefore the equal-volume ball $\Omega_{\alpha,d}^{\ast}$ has radius
\begin{equation}\label{eq:Rad}
R_{\alpha,d}
=\left(\left|\Omega_{\alpha,d}\right|/\omega_d\right)^{1/d}
=\left(\frac{2\omega_{\ell}}{d\,\omega_d}\right)^{\!1/d}\alpha^{-\ell/d},
\end{equation}
and consequently
\begin{equation}\label{eq:kappa_star}
\kappa(\Omega_{\alpha,d}^{\ast})
=\frac{j_{d/2,1}}{R_{\alpha,d}}
= j_{d/2,1}\left(\frac{d\,\omega_d}{2\omega_{\ell}}\right)^{\!1/d}\alpha^{\ell/d}
\end{equation}
by \cite[Lemma 3.2 and Example 3.5]{BLP09}.
In particular $\kappa(\Omega_{\alpha,d}^{\ast})$ grows like $\alpha^{(d-1)/d}$ as $\alpha\to\infty$.

For $\xib=(u,\etab)\in\R\times\R^{d-1}$, slicing perpendicular to the first axis and using
$\int_{B_{\ell}(r)}\er^{\ir\etab\cdot\yb}\dd \yb=r^{\ell}\mathcal{B}_{\ell}(r|\etab|)$, where
\[
\mathcal{B}_{\ell}(t):=\int_{B_{\ell}}\er^{\ir t z_1}\dd \zb=(2\pi)^{\ell/2}\,\frac{J_{\ell/2}(t)}{t^{\ell/2}},\qquad
\mathcal{B}_{\ell}(0)=\omega_{\ell},
\]
we obtain
\begin{equation}\label{eq:fhat_alphad}
\begin{split}
\widehat{\chi_{\Omega_{\alpha,d}}}(u,\etab)
&=
\int_{-1}^1 \er^{\ir ux} \left( \frac{1-|x|}{\alpha} \right)^\ell \mathcal{B}_\ell \left((1-|x|) \frac{|\etab|}{\alpha} \right) \dd x
\\
&=
2\alpha^{-\ell}
\int_0^1 \cos(ux)(1-x)^\ell \mathcal{B}_\ell \left((1-x) \frac{|\etab|}{\alpha} \right) \dd x
\\
&=
\frac{1}{\alpha^{\ell}}\,F_{\ell}\left(u,\frac{|\etab|}{\alpha}\right)
\end{split}
\end{equation}
where 
\[
F_{\ell}(u,s):=2\int_0^1 \cos(ux)\,(1-x)^{\ell}\,\mathcal{B}_{\ell}\left(s(1-x)\right)\dd x.
\]
Consequently a real zero of $\widehat{\chi_{\Omega_{\alpha,d}}}$ at $(u,\etab)$ corresponds exactly to a real zero of $F_{\ell}$ at $(u,s)$ with $s=|\etab|/\alpha$.

For $\ell\ge 0$ and $u\in\R$, set
\begin{equation}\label{eq:Im_def}
I_{\ell}(u):=\int_0^1 (1-x)^{\ell}\cos(ux)\dd x,
\end{equation}
so that $F_{\ell}(u,0)=2\omega_{\ell}\,I_{\ell}(u)$.

\begin{lemma}\label{lem:axial}
For every integer $\ell\ge 2$,
\begin{equation}\label{eq:axial_pos}
I_{\ell}(u)>0\qquad\text{for all }u\in\R.
\end{equation}
Equivalently, $F_{\ell}(u,0)>0$ for every $u\in\R$ whenever $\ell=d-1\ge 2$.
\end{lemma}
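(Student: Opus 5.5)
Our plan is to integrate by parts twice in \eqref{eq:Im_def} and reduce positivity to an elementary inequality for a cosine-type integral with a nonnegative, convex weight. For $u=0$ we have $I_\ell(0)=1/(\ell+1)>0$, and $I_\ell$ is even, so we may assume $u>0$. Put $g(x):=(1-x)^\ell$. Then $g(1)=g'(1)=0$ because $\ell\ge 2$, and $g'(0)=-\ell$. The first integration by parts gives
\[
I_\ell(u)=\Bigl[g(x)\tfrac{\sin(ux)}{u}\Bigr]_0^1-\frac1u\int_0^1 g'(x)\sin(ux)\dd x=\frac{\ell}{u}\int_0^1(1-x)^{\ell-1}\sin(ux)\dd x .
\]
It therefore suffices to show that $S_m(u):=\int_0^1(1-x)^{m}\sin(ux)\dd x>0$ for $m=\ell-1\ge1$ and all $u>0$.

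We will prove this positivity by a second integration by parts:
\[
S_m(u)=\Bigl[-(1-x)^m\tfrac{\cos(ux)}{u}\Bigr]_0^1-\frac{m}{u}\int_0^1(1-x)^{m-1}\cos(ux)\dd x=\frac1u-\frac{m}{u}\int_0^1(1-x)^{m-1}\cos(ux)\dd x,
\]
where the boundary term at $x=1$ vanishes since $m\ge1$. This reduces the claim to
\[
m\int_0^1(1-x)^{m-1}\cos(ux)\dd x<1=m\int_0^1(1-x)^{m-1}\dd x,
\]
that is, to $\int_0^1 (1-x)^{m-1}\bigl(1-\cos(ux)\bigr)\dd x>0$. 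This integral is strictly positive for $u>0$: the integrand is nonnegative, and it is strictly positive on a set of positive measure, namely near $x=0$ where $1-\cos(ux)>0$ for small $x>0$. Hence $S_m(u)>0$, and therefore $I_\ell(u)=\tfrac{\ell}{u}S_{\ell-1}(u)>0$.

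The equivalence with $F_\ell(u,0)>0$ is immediate from $F_\ell(u,0)=2\omega_\ell I_\ell(u)$ and $\omega_\ell>0$. The only real care needed is at the boundary terms. Both vanishings at $x=1$ require the exponent to be at least $1$ at each stage, which is exactly where $\ell\ge2$ is used. For $\ell=1$, one has $I_1(u)=(1-\cos u)/u^2$, which vanishes at $u=2\pi$; this is consistent with the restriction $d\ge3$ in \Cref{rem:worksd3}. I do not anticipate a genuine obstacle: the argument is just an application of the Pólya-type principle that a cosine transform of a convex decreasing function vanishing to order two at the endpoint is positive.
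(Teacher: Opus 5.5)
Your proposal is correct and follows essentially the same route as the paper. Both integrate by parts twice to get $u^{2}I_\ell(u)=\ell\bigl(1-(\ell-1)I_{\ell-2}(u)\bigr)=\ell(\ell-1)\int_0^1(1-x)^{\ell-2}\bigl(1-\cos(ux)\bigr)\dd x$, and then use that $\cos(ux)<1$ on a set of positive measure; you merely make the intermediate sine integral explicit.
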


\begin{proof}
At $u=0$, $I_{\ell}(0)=1/(\ell+1)>0$. For $u\ne 0$, two integrations by parts in \eqref{eq:Im_def} yield, since the boundary terms vanish at $x=1$ (factor $(1-x)$) and the sine boundary term vanishes at $x=0$,
\begin{equation}\label{eq:Im_rec}
u^{2}\,I_{\ell}(u)=\ell\left(1-(\ell-1)\,I_{\ell-2}(u)\right),\qquad \ell\ge 2.
\end{equation}
For $u\ne 0$, we have $\cos(ux)<1$ on a subset of $[0,1]$ of positive Lebesgue measure, hence
\begin{equation}\label{eq:Im_bound}
I_{\ell-2}(u)\;<\;\int_0^1(1-x)^{\ell-2}\dd x=\frac{1}{\ell-1}.
\end{equation}
Combined with \eqref{eq:Im_rec} this gives $u^{2}I_{\ell}(u)>0$, hence $I_{\ell}(u)>0$.
\end{proof}

\begin{remark}\label{rem:worksd3}
The inequality \eqref{eq:axial_pos} fails for $\ell=0,1$: $I_0(u)=\sin u/u$ vanishes at $u=k\pi$, $k\ne 0$, and $I_1(u)=(1-\cos u)/u^2$ vanishes at $u=2k\pi$, $k\ne 0$. Hence the threshold $\ell\ge 2$ --- i.e.\ $d\ge 3$ --- is sharp: it is exactly the regime in which the axial Fourier trace of $\Omega_{\alpha,d}$ stays uniformly above zero.
\end{remark}

The next lemma extends \Cref{lem:axial} from $s=0$ to a small interval $s\in[0,\sigma_d]$.

\begin{lemma}\label{lem:offaxis}
For every $d\ge 3$ there exists $\sigma_d>0$ such that
\begin{equation}\label{eq:offaxis}
F_{d-1}(u,s)>0\qquad\text{for every }u\in\R\ \text{and}\ s\in[0,\sigma_d].
\end{equation}
\end{lemma}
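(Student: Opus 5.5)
The strategy is to combine the strict positivity of \Cref{lem:axial} on compact $u$-sets with a uniform asymptotic lower bound for $F_{\ell}(u,s)$ as $|u|\to\infty$. Compactness in $u$ is not available for free, so the large-$|u|$ regime has to be handled separately. Throughout, $\ell=d-1\ge 2$.

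\emph{Step 1 (large $|u|$).} Put $g_s(x):=(1-x)^{\ell}\mathcal{B}_{\ell}(s(1-x))$. Then $F_\ell(u,s)=2\int_0^1\cos(ux)g_s(x)\dd x$, and $g_s$ is smooth on $[0,1]$ because $\mathcal{B}_\ell$ is even and entire. We integrate by parts as in the proof of \Cref{lem:axial}. The order of vanishing of $g_s$ at $x=1$ is $\ell\ge2$, so the boundary terms there vanish up to the relevant order. Two integrations by parts then give
\[
F_\ell(u,s)=\frac{2}{u^2}\Bigl(-g_s'(0)\Bigr)\;-\;\frac{2}{u^2}\int_0^1\cos(ux)\,g_s''(x)\dd x\cdot(\text{sign conventions}),
\]
which we rewrite as $F_\ell(u,s)=\frac{2}{u^2}\bigl(-g_s'(0)+R(u,s)\bigr)$. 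A further integration by parts shows $|R(u,s)|\le C/|u|$, uniformly for $s\in[0,1]$. The one subtlety is $\ell=2$: there $g_s''(1)\neq0$, and it produces a term of size $O(1/|u|)$, which is still fine. The leading coefficient is
\[
-g_s'(0)=\ell\,\mathcal{B}_\ell(s)+s\,\mathcal{B}_\ell'(s).
\]
At $s=0$ it equals $\ell\omega_\ell>0$, and it depends continuously on $s$. Hence there exist $s_0>0$ and $U>0$ such that $-g_s'(0)\ge \ell\omega_\ell/2$ and $|R|\le \ell\omega_\ell/4$ for all $s\in[0,s_0]$ and $|u|\ge U$. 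This gives $F_\ell(u,s)>0$ in that range.

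\emph{Step 2 (bounded $|u|$).} On the compact set $[-U,U]\times[0,s_0]$ the function $F_\ell$ is jointly continuous. By \Cref{lem:axial}, $F_\ell(u,0)=2\omega_\ell I_\ell(u)\ge m>0$ on $[-U,U]$. Uniform continuity in $s$, for example via the bound $|\partial_sF_\ell|\le C'$, which follows from $|\mathcal{B}_\ell'|\le$ const, then yields $\sigma\in(0,s_0]$ with $F_\ell>0$ on $[-U,U]\times[0,\sigma]$.

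Taking $\sigma_d:=\sigma$ combines the two ranges and proves \eqref{eq:offaxis}.

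The main obstacle is Step 1, namely making the remainder estimate uniform in $s$ while tracking the boundary terms at $x=1$. The cleanest route is to write $g_s(x)=(1-x)^{\ell}h(s(1-x))$ with $h$ smooth and even, and to bound $g_s''$ and $g_s'''$ on $[0,1]$ by constants independent of $s\in[0,1]$. The remaining boundary contributions at $x=1$ come from derivatives of order $\ge\ell$ and are $O(|u|^{-3})$.
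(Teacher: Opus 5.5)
Your proposal is correct and follows essentially the same route as the paper. Two integrations by parts give $u^2F_\ell(u,s)=-2g_s'(0)-2\int_0^1 g_s''(x)\cos(ux)\dd x$, whose leading coefficient equals $\ell\omega_\ell>0$ at $s=0$; a third integration by parts, with the extra boundary term $g_s''(1)\sin u/u$ when $\ell=2$, makes the remainder $O(1/|u|)$ uniformly in small $s$; and on $[-U,U]$ the paper likewise combines compactness and joint continuity with \Cref{lem:axial}, the only cosmetic difference being that you get the small-$s$ continuity from a bound on $\partial_sF_\ell$ rather than from joint continuity of the parameter integral.
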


\begin{proof}
Write $\ell=d-1\ge 2$ and $f_{s}(x):=(1-x)^{\ell}\mathcal{B}_{\ell}(s(1-x))$, so that $F_{\ell}(u,s)=2\int_0^1\cos(ux)f_{s}(x)\dd x$.
Two integrations by parts (justified by $f_{s}(1)=0$ for $\ell\ge 1$ and $f_{s}'(1)=0$ for $\ell\ge 2$, both of which apply since $\ell\ge 2$) give
\begin{equation}\label{eq:u2F}
u^{2} F_{\ell}(u,s)=-2f_{s}'(0)-2\int_0^1 f_{s}''(x)\cos(ux)\dd x,\qquad u\ne 0.
\end{equation}
Using the Bessel identity $\mathcal{B}_{\ell}'(t)=-t\,\mathcal{B}_{\ell+2}(t)/(2\pi)$ derived from \cite[Equation 10.6.6]{DLMF},
\begin{equation}\label{eq:g_prime_0}
-f_{s}'(0)=\ell\,\mathcal{B}_{\ell}(s)-\frac{s^{2}}{2\pi}\,\mathcal{B}_{\ell+2}(s),
\end{equation}
which is a continuous function of $s$ with value $\ell\,\omega_{\ell}>0$ at $s=0$.

By the Riemann--Lebesgue lemma applied to $f_{s}''\in L^{1}[0,1]$, $\int_0^1 f_{s}''(x)\cos(ux)\dd x\to 0$ as $|u|\to\infty$ for each fixed $s$. To make this decay uniform in $s\in[0,s_{0}]$ for some $s_{0}>0$ we use one further integration by parts when $\ell\ge 3$ (which is justified because then $f_{s}''(1)=0$), giving
\begin{equation}\label{eq:residual}
\int_0^1 f_{s}''(x)\cos(ux)\dd x = -\frac{1}{u}\int_0^1 f_{s}'''(x)\sin(ux)\dd x,
\end{equation}
hence
\begin{equation}\label{eq:residual_bound}
\left|\int_0^1 f_{s}''(x)\cos(ux)\dd x\right|\le\frac{\|f_{s}'''\|_{L^{1}[0,1]}}{|u|}.
\end{equation}
The numerator is uniformly bounded in $s\in[0,s_0]$: indeed, $(x,s)\mapsto f_{s}'''(x)$ is jointly continuous on the compact set $[0,1]\times[0,s_0]$ (since $\mathcal{B}_\ell$ is entire), hence bounded there, so $\sup_{s\in[0,s_0]}\|f_{s}'''\|_{L^1[0,1]}<\infty$. (For $\ell=2$, an extra boundary term $f_s''(1)\sin u/u=2\omega_2\sin u/u$ appears, uniformly bounded by $2\omega_2/|u|$ independently of $s$, and the same conclusion holds.)

Combining \eqref{eq:u2F}--\eqref{eq:residual_bound}, there exist $U_d>0$ and $\sigma_d^{(1)}>0$ such that
\begin{equation}\label{eq:large_u}
\begin{split}
&\quad
u^{2}F_{\ell}(u,s)
\\
&\geq
-2f_{s}'(0)- \left| 2\int_0^1 f_{s}''(x)\cos(ux)\dd x \right|
\\
&\geq
\frac{3}{2}
\ell\,\mathcal{B}_{\ell}(0)
- \left| 2\int_0^1 f_{s}''(x)\cos(ux)\dd x \right|
\qquad
\text{by \eqref{eq:g_prime_0} for $s$ small}
\\
&\geq
\frac{3}{2}
\ell\,\mathcal{B}_{\ell}(0)
- 2 \cdot \frac{1}{4} \ell \omega_\ell
\qquad
\text{by \eqref{eq:residual_bound} for $|u|$ large}
\\
&= \ell\omega_{\ell}
\end{split}
\end{equation}
for all $|u|\ge U_d\ \text{and}\ s\in[0,\sigma_d^{(1)}]$.
For $|u|\le U_d$, $F_\ell(\cdot,0)$ is continuous and strictly positive on the compact interval $[-U_d,U_d]$ (\Cref{lem:axial}), so it admits a positive minimum $c_d>0$; the joint continuity of $F_\ell$ on $[-U_d,U_d]\times[0,s_0]$ (a parameter integral whose integrand is jointly continuous and dominated by $\sup_{[0,s_0]}|\mathcal{B}_\ell|$) then yields a $\sigma_d^{(2)}>0$ with $F_\ell(u,s)\ge c_d/2$ for all $(u,s)\in[-U_d,U_d]\times[0,\sigma_d^{(2)}]$.

Setting $\sigma_d:=\min(\sigma_d^{(1)},\sigma_d^{(2)})$ yields \eqref{eq:offaxis}.
\end{proof}

\begin{remark}\label{rem:quant_sigma}
Using the proof above, one can give an explicit value of
$\sigma_d>0$. There are three steps to make quantitative. First, the
identity
$\mathcal B_\ell'(s)
=-\frac{s}{2\pi}\mathcal B_{\ell+2}(s)$
\cite[Equation 10.6.6]{DLMF}, together with \Cref{lem:poisson}, gives
explicit bounds for
$|\mathcal B_\ell(s)-\mathcal B_\ell(0)|$ and
$|\mathcal B_{\ell+2}(s)|$ for small $s$, and hence an explicit
$\sigma_d^{(1)}\in(0,1]$. Substitution in \eqref{eq:g_prime_0} gives
the second inequality in \eqref{eq:large_u}.

Second, differentiating $f_s$ three times and using the same Bessel
identities and bounds gives an explicit estimate, uniform for
$0\le s\le1$, of $\|f_s'''\|_{L^1[0,1]}$. For $d\ge4$,
\eqref{eq:residual_bound}, and for $d=3$ the corresponding
integration-by-parts estimate with its additional explicit boundary
term, then give the third inequality in \eqref{eq:large_u} for an
explicit value of $U_d\ge1$.

Finally, rewriting \eqref{eq:Im_rec} as an integral involving
$(1-\cos(ux))/u^2$, with the continuous interpretation at $u=0$,
and restricting this integral to $0\le x\le1/U_d$, where
$1-\cos(ux)\ge u^2x^2/4$, gives an explicit lower bound for
$F_\ell(u,0)$ on $|u|\le U_d$. The preceding bound for
$|\mathcal B_\ell(s)-\mathcal B_\ell(0)|$ gives an explicit modulus
of continuity for $F_\ell(u,s)$, uniform in $u$.
This gives an explicit
$\sigma_d^{(2)}$.
We do not record the resulting constants here.
\end{remark}

\begin{proof}[Proof of \Cref{thm:highd}]
Let $\xib=(u,\etab)\in\R^d\setminus\{0\}$ be a real zero of $\widehat{\chi_{\Omega_{\alpha,d}}}$. By the sentence following \eqref{eq:fhat_alphad}, $F_{d-1}(u,|\etab|/\alpha)=0$, and by \Cref{lem:offaxis} this forces $|\etab|/\alpha>\sigma_d$, i.e.\ $|\etab|>\alpha\sigma_d$. Hence
\begin{equation}\label{eq:kappa_lower}
\kappa(\Omega_{\alpha,d})\ge \inf_{\xib:\widehat{\chi}(\xib)=0}|\xib|\ge \alpha\,\sigma_d.
\end{equation}
On the other hand, by \eqref{eq:kappa_star}, $\kappa(\Omega_{\alpha,d}^{\ast})=c_d\,\alpha^{(d-1)/d}$ with $c_d:=j_{d/2,1}(d\omega_d/(2\omega_{\ell}))^{1/d}$.
Comparing,
\begin{equation}\label{eq:ratio}
\frac{\kappa(\Omega_{\alpha,d})}{\kappa(\Omega_{\alpha,d}^{\ast})}
\ge \frac{\sigma_d}{c_d}\,\alpha^{1/d}\;\longrightarrow\;\infty\qquad\text{as }\alpha\to\infty,
\end{equation}
which yields both \eqref{eq:highd_main} (with $\alpha_d^{\ast}:=(c_d/\sigma_d)^{d}$) and the explicit rate \eqref{eq:highd_rate}.
\end{proof}

\begin{remark}\label{rmk:highd_numerics}
The threshold $\alpha_d^{\ast}$ is well within reach in every dimension. Non-rigorous numerical evaluation of $\kappa(\Omega_{\alpha,d})$ via the smallest real zero of $F_{d-1}(u,s)$ gives, at $\alpha=100$, the ratios
\[
\begin{array}{c|cccccccc}
d & 3 & 4 & 5 & 6 & 7 & 8 & 9 & 10\\\hline
\kappa(\Omega_{\alpha,d})/\kappa(\Omega_{\alpha,d}^{\ast}) & 1.803 & 1.563 & 1.382 & 1.276 & 1.210 & 1.165 & 1.133 & 1.110
\end{array}
\]
Rigorous interval-arithmetic certification for any fixed $(\alpha,d)$ reduces to bounding $F_{d-1}(u,s)$ uniformly on the 2D region $\{u^{2}+(\alpha s)^{2}\le \kappa(\Omega_{\alpha,d}^{\ast})^{2}\}$ and is left to future work.
\end{remark}

\section{The nearest-zero direction for regular polygons: proof of \Cref{thm:regular_polygon_active_direction}}\label{section:nearest-zero}

\subsection{Notation and scaling for regular polygons}\label{ssec:polynotation}

Throughout this section and \Cref{section:regular-polygons-in-the-plane}, $P_{2n}$, $n\ge2$, denotes the regular $2n$-gon of area $\pi$, centred at the origin and positioned as in \Cref{ssec:ideas}, so that $\ev0$ is orthogonal to one of its sides; $\delta=\frac{\pi}{2n}$, and $\mathcal{V}$ and $\mathcal{M}$ are the sets of vertex and of side midpoint directions \eqref{eq:VM}.

By \eqref{eq:scaleFT}, any $\Omega$ and its rescaled copy $s\Omega$ have the same minimising directions. It will be convenient to work with the rescaled polygon of circumradius one, and we adopt throughout the following convention: a tilde marks a quantity attached to the circumradius-one polygon $\widetilde{P}_{2n}$, and the same letter without a tilde marks the corresponding quantity attached to the area-$\pi$ polygon $P_{2n}$. 

\begin{figure}[htb]
\centering
\includegraphics{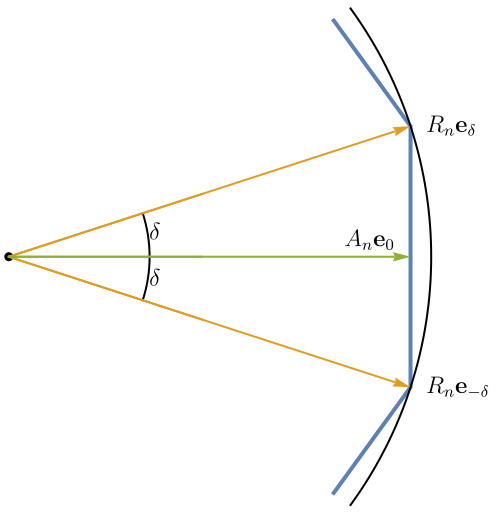}
\caption{The geometry of a regular polygon $P_{2n}$.}
\label{fig:poly}
\end{figure}

Thus $\widetilde{P}_{2n}$ is the regular $2n$-gon of circumradius one, centred at the origin, with the vertices at the points of $\mathcal{V}$ and the side midpoints at the points of $(\cos\delta)\,\mathcal{M}$. Its area is 
\[
\left|\widetilde{P}_{2n}\right|=n\sin2\delta, 
\]
so, writing $R_n$ for the dilation factor for which $P_{2n}=R_n\,\widetilde{P}_{2n}$, and using $2n\delta=\pi$, we get
\begin{equation}\label{eq:areapi-geom}
R_n=\sqrt{\pi/(n\sin2\delta)}=\sqrt{\frac{2\delta}{\sin2\delta}}
\qquad\text{and}\qquad
A_n:=R_n\cos\delta=\sqrt{\frac{\delta}{\tan\delta}}
\end{equation}
for the circumradius and the apothem of $P_{2n}$; the vertices of $P_{2n}$ are the points of $R_n\mathcal{V}$, and its side midpoints are the points of $A_n\mathcal{M}$. We record the identity and the inequalities
\begin{equation}\label{eq:areapi-geom2}
R_n^{2}-A_n^{2}=\delta\tan\delta,
\qquad
A_n<1<R_n,
\end{equation}
the latter following from $\tan\delta>\delta>\sin\delta\cos\delta$.

We define the \emph{radial function} $\widetilde{\mathcal{R}}(\theta)$ of $\widetilde{P}_{2n}$ by $\widetilde{P}_{2n}=\left\{r\,\ev\theta:\ 0\le r<\widetilde{\mathcal{R}}(\theta)\right\}$, thus by
\begin{equation}\label{eq:radialfn}
\begin{split}
\widetilde{\mathcal{R}}(\theta)&=\frac{\cos\delta}{\cos\theta}\qquad\text{if }\theta\in[-\delta,\delta],\\
\widetilde{\mathcal{R}}(\theta\pm 2\delta)&=\widetilde{\mathcal{R}}(\theta).
\end{split}
\end{equation}
The radial function is even, $(2\delta)$-periodic, and strictly increasing on $[0,\delta]$, from $\widetilde{\mathcal{R}}(0)=\cos\delta$ (side midpoint) to $\widetilde{\mathcal{R}}(\delta)=1$ (vertex). Accordingly, the radial function of $P_{2n}$ is
\begin{equation}\label{eq:radialfn-scaled}
\mathcal{R}:=R_n\widetilde{\mathcal{R}},
\qquad\text{with}\qquad
\mathcal{R}(\theta)=\frac{A_n}{\cos\theta}\quad\text{for }\theta\in[-\delta,\delta],
\end{equation}
extended evenly and $(2\delta)$-periodically.

Since $P_{2n}$ and $\widetilde{P}_{2n}$ have the same minimising directions, \Cref{thm:regular_polygon_active_direction} is equivalent to the assertion that the set of minimising directions of $\widetilde{P}_{2n}$ is $\mathcal{M}$ if $n$ is even, and $\mathcal{V}$ if $n$ is odd; it is the latter that we prove.

\subsection{Plan of the proof} 

As the case $n=2$ is covered by \Cref{ex:square}, we fix from now on $n\ge 3$ and write for brevity $\widetilde{P}:=\widetilde{P}_{2n}$ and $\widetilde{\kappa}:=\kappa\left(\widetilde{P}\right)$. 

Recall that the support function of a convex $\Omega$ is
$w_\Omega(\mathbf e)=\sup_{\xb\in\Omega}\xb\cdot\mathbf e$; for balanced
$\Omega$ it is the half-width of $\Omega$ in the direction $\mathbf e$, see
Figure~\ref{fig:geom}.

\begin{figure}[htb]
\centering
\includegraphics[width=0.6\textwidth]{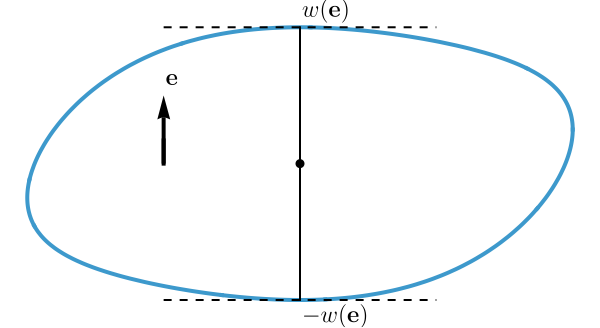}
\caption{The support function of $\Omega$.}
\label{fig:geom}
\end{figure}

For any convex balanced planar domain $\Omega\subset\mathbb{R}^2$, we have, by \cite[Lemma~4.8]{BLP09}, see also \cite{Zastavnyi},
\begin{equation}\label{eq:BLP48}
\kappa_1(\ev{})\ \le\ \frac{2\pi}{w(\ev{})}
\end{equation}
for any $\ev{}\in \mathbb{S}^1$. Since $\widetilde{P}$ has circumradius one, taking any $\ev{}\in\mathcal{V}$ gives $w_{\widetilde{P}}(\ev{})=1$, 
and therefore by \eqref{eq:kappa_as_min_over_e}, 
\begin{equation}\label{eq:kappa2pi}
0<\widetilde{\kappa}\le2\pi.
\end{equation}

Then,  
$\fhat{\widetilde{P}}(\rho\,\ev\phi)>0$ for all $0<\rho<\widetilde{\kappa}$ and all
$\phi$, and $\fhat{\widetilde{P}}(\widetilde{\kappa}\,\ev\phi)\ge0$ for all $\phi$.  If $\ev{\phi^\star}$ is a minimising direction, then $\fhat{\widetilde{P}}(\widetilde{\kappa}\,\ev{\phi^\star})=0$, and
$\phi^\star$ is a global minimiser of 
\[
\widetilde{\Phi}_\rho(\phi):=\fhat{\widetilde{P}}(\rho\,\ev\phi),\qquad\phi\in\mathbb{R},
\]
the minimum value being $0$; moreover $\kappa_1(\ev{\phi^\star})=\widetilde{\kappa}$.

\Cref{thm:regular_polygon_active_direction} will follow once we know, \emph{for
$\rho=\widetilde{\kappa}$}, that the global minimisers of $\widetilde{\Phi}_\rho(\phi)$ are
exactly the side midpoint directions ($n$ even) resp.\ the vertex directions ($n$ odd).  We shall prove this for \emph{every} $\rho\in(0,2\pi]$, which suffices by
\eqref{eq:kappa2pi}.

To proceed, we state 
\begin{proposition}\label{prop:fourier}
For all $\rho>0$ and $\phi\in\R$,
\begin{equation}\label{eq:fourier}
\widetilde{\Phi}_\rho(\phi)=\fhat{\widetilde{P}}(\rho\,\ev\phi)=\widetilde{a}_0(\rho)+2\sum_{m\ge1}(-1)^{nm}\widetilde{b}_m(\rho)\cos(2nm\,\phi),
\end{equation}
where 
\begin{equation}\label{eq:Bm}
\widetilde{a}_0(\rho)=\int_{\widetilde{P}} J_0(\rho|\xb|)\,\dr \xb,\qquad
\widetilde{b}_m(\rho)=\int_{\widetilde{P}} J_{2nm}(\rho|\xb|)\cos\left(2nm\theta\right)\,\dr \xb,\qquad \dr \xb = r\,\dr r\,\dr\theta,
\end{equation}
and the series \eqref{eq:fourier} together with all its $\phi$-derivatives converges
absolutely and uniformly on compact sets of $(\rho,\phi)$.  In particular,
the function $\widetilde{\Phi}_\rho(\phi)$ is $C^\infty$, even, and
$\frac{\pi}{n}$-periodic, and the series may be differentiated in $\phi$ term-wise.
\end{proposition}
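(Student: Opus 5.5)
We would prove \Cref{prop:fourier} by expanding the plane wave with the Jacobi--Anger identity and then using the dihedral symmetry of $\widetilde{P}$ to discard all but the harmonics of order divisible by $2n$.

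\textbf{Expansion.} In polar coordinates $\xb=r\ev\theta$, $\xib=\rho\ev\phi$, we have $\xib\cdot\xb=\rho r\cos(\theta-\phi)$. Jacobi--Anger gives
\[
\er^{\ir\rho r\cos(\theta-\phi)}=J_0(\rho r)+2\sum_{k\ge1}\ir^k J_k(\rho r)\cos\bigl(k(\theta-\phi)\bigr).
\]
The series converges absolutely and uniformly for $\rho r$ in compact sets, since $|J_k(t)|\le (t/2)^k/k!$. We integrate term by term over $\widetilde{P}\subset\{r\le1\}$; uniform convergence justifies the interchange. Expanding $\cos(k(\theta-\phi))=\cos k\theta\cos k\phi+\sin k\theta\sin k\phi$ yields
\[
\widetilde{\Phi}_\rho(\phi)=\widetilde{a}_0(\rho)+2\sum_{k\ge1}\ir^k\left(c_k\cos k\phi+s_k\sin k\phi\right),
\]
where $c_k=\int_{\widetilde P}J_k(\rho r)\cos k\theta\,\dr\xb$ and $s_k$ is the analogous integral with $\sin k\theta$.

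\textbf{Symmetry.} The radial function $\widetilde{\mathcal R}$ is even and $2\delta=\pi/n$-periodic, and we write $\int_{\widetilde P}g\,\dr\xb=\int_0^{2\pi}\int_0^{\widetilde{\mathcal R}(\theta)}g\,r\,\dr r\,\dr\theta$.
\begin{itemize}
\item Evenness of $\widetilde{\mathcal R}$ in $\theta$ gives $s_k=0$ for all $k$.
\item Invariance of $\widetilde{P}$ under rotation by $\pi/n$ means that $\int_0^{2\pi}h(\theta)\cos k\theta\,\dr\theta$, with $h(\theta)=\int_0^{\widetilde{\mathcal R}(\theta)}J_k(\rho r)r\,\dr r$ being $\pi/n$-periodic, vanishes unless $2n\mid k$.
\end{itemize}
Writing $k=2nm$, we get $\ir^{2nm}=(-1)^{nm}$ and $c_{2nm}=\widetilde b_m(\rho)$, which is exactly \eqref{eq:fourier}.

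\textbf{Convergence of derivatives.} We bound
\[
|\widetilde b_m(\rho)|\le |\widetilde P|\,\sup_{0\le t\le\rho}|J_{2nm}(t)|\le \pi\,\frac{(\rho/2)^{2nm}}{(2nm)!}.
\]
The $p$-th $\phi$-derivative of the $m$-th term is bounded by $(2nm)^p$ times this quantity. The resulting majorant series is summable, uniformly for $\rho$ in compact sets, by the factorial decay. Term-wise differentiation is therefore legitimate, and $\widetilde\Phi_\rho$ is $C^\infty$. Evenness and $\pi/n$-periodicity can be read off from the cosine series.

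The only point needing care is the interchange of sum and integral together with the derivative bounds. The factorial bound on $J_k$ handles this routinely, so no step is a real obstacle.
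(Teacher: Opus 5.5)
Your proposal is correct and follows essentially the same route as the paper: a Jacobi--Anger expansion, term-by-term integration justified by $|J_k(t)|\le (t/2)^k/k!$, dihedral symmetry to keep only the harmonics $k=2nm$, and the bound $|\widetilde b_m(\rho)|\le\pi(\rho/2)^{2nm}/(2nm)!$ to control the differentiated series. The only cosmetic difference is that the paper first uses central symmetry to write $\fhat{\widetilde P}$ as $\int\cos(\xb\cdot\xib)$ and expands with the cosine form of Jacobi--Anger, whereas you use the complex form and remove the sine terms and the other harmonics via the evenness and $\pi/n$-periodicity of $\widetilde{\mathcal R}$.
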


It will be convenient to substitute
\begin{equation}\label{eq:psi}
\begin{split}
\psi&:=2n\phi,\\
\widetilde{\Psi}_\rho(\psi)&:=\widetilde{\Phi}_\rho\left(\frac{\psi}{2n}\right)
=\widetilde{a}_0(\rho)+\sum_{m=1}^\infty \widetilde{c}_m(\rho)\cos(m\psi),\\
\widetilde{c}_m(\rho)&:=2(-1)^{nm}\widetilde{b}_m(\rho).
\end{split}
\end{equation}

The side midpoint directions correspond to $\psi\equiv0\pmod{2\pi}$ and the vertex
directions to $\psi\equiv\pi\pmod{2\pi}$; the fundamental arc $\phi\in[0,\delta]$,
which runs from a side midpoint ($\phi=0$) to a vertex ($\phi=\delta$), corresponds to
$\psi\in[0,\pi]$, and every direction $\phi\in\mathbb{R}$ is equivalent, under the symmetries of
$\widetilde{\Psi}_\rho$ (which is even and $2\pi$-periodic in $\psi$), to exactly one
$\psi\in[0,\pi]$.

Further on, we have 
\begin{proposition}\label{prop:domination}
Fix $\rho>0$ and assume that
\begin{equation}\label{eq:dom}
\sum_{m=2}^\infty m^2|\widetilde{b}_m(\rho)|<-\widetilde{b}_1(\rho),
\end{equation}
and, therefore, 
\begin{equation}\label{eq:b1neg}
\widetilde{b}_1(\rho)<0.
\end{equation}
Then,
\begin{enumerate}
\item[\textup{(i)}] $\widetilde{\Psi}_\rho'(\psi)\neq 0$ for all $\psi\in(0,\pi)$. Moreover,
\[
\sgn \widetilde{\Psi}_\rho'(\psi)=-\sgn \widetilde{c}_1(\rho)=(-1)^n,\qquad \psi\in(0,\pi),
\]
so $\widetilde{\Psi}_\rho$ is strictly increasing on $[0,\pi]$ if $n$ is even and strictly decreasing if $n$ is odd.
\item[\textup{(ii)}] The set of global minimisers of
$\widetilde{\Phi}_\rho(\phi)$ over $\R$ is exactly the set of side midpoint
directions if $n$ is even, and exactly the set of vertex directions if $n$ is odd.
\end{enumerate}
\end{proposition}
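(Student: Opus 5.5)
Differentiating the series \eqref{eq:psi} term by term, as \Cref{prop:fourier} permits, gives
\[
\widetilde{\Psi}_\rho'(\psi)=-\sum_{m\ge1} m\,\widetilde{c}_m(\rho)\sin(m\psi)=-\sin\psi\Bigl(\widetilde{c}_1(\rho)+\sum_{m\ge2}m\,\widetilde{c}_m(\rho)\,\frac{\sin(m\psi)}{\sin\psi}\Bigr),
\]
and on $(0,\pi)$ we have $\sin\psi>0$. The key elementary estimate is $|\sin(m\psi)|\le m\sin\psi$ for $\psi\in[0,\pi]$. It follows by induction from $\sin((m+1)\psi)=\sin(m\psi)\cos\psi+\cos(m\psi)\sin\psi$. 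Hence $\bigl|\sum_{m\ge2}m\,\widetilde{c}_m\sin(m\psi)/\sin\psi\bigr|\le\sum_{m\ge2}m^2|\widetilde{c}_m|=2\sum_{m\ge2}m^2|\widetilde{b}_m|$.

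Since $|\widetilde{c}_1|=2|\widetilde{b}_1|=-2\widetilde{b}_1$, hypothesis \eqref{eq:dom} says this bound is strictly smaller than $|\widetilde{c}_1|$. Two consequences follow. First, \eqref{eq:b1neg} holds, because the left side of \eqref{eq:dom} is nonnegative. Second, the bracket has the sign of $\widetilde{c}_1$ for every $\psi\in(0,\pi)$, so $\sgn\widetilde{\Psi}_\rho'=-\sgn\widetilde{c}_1$. Since $\widetilde{c}_1=2(-1)^n\widetilde{b}_1$ and $\widetilde{b}_1<0$, we get $\sgn\widetilde{c}_1=-(-1)^n$, which gives (i). The convergence of $\sum m^2|\widetilde{b}_m|$ is part of the hypothesis, so the rearrangement into the bracket is legitimate.

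For (ii), (i) shows that on $[0,\pi]$ the unique minimiser of $\widetilde{\Psi}_\rho$ is $\psi=0$ when $n$ is even and $\psi=\pi$ when $n$ is odd, because the function is strictly monotone there. By evenness and $2\pi$-periodicity of $\widetilde{\Psi}_\rho$, every $\psi\in\R$ is equivalent to exactly one point of $[0,\pi]$, as recorded after \eqref{eq:psi}. So the global minimisers over $\R$ are $\psi\in2\pi\mathbb{Z}$ ($n$ even), respectively $\psi\in\pi+2\pi\mathbb{Z}$ ($n$ odd).

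Translating back through $\phi=\psi/(2n)$ gives $\phi\in(\pi/n)\mathbb{Z}=2\delta\mathbb{Z}$, which are exactly the directions in $\mathcal{M}$. In the odd case it gives $\phi\in\delta+2\delta\mathbb{Z}$, which are exactly the directions in $\mathcal{V}$. The main point to check carefully is the inequality $|\sin m\psi|\le m\sin\psi$ and its uniform use in the sum; everything else is bookkeeping of signs.
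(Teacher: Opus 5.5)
Your proposal is correct and follows the paper's proof essentially line by line. It uses the same four steps: termwise differentiation; the bound $|\sin m\psi|\le m\sin\psi$ on $[0,\pi]$, which dominates the $m\ge2$ terms by $|\widetilde{c}_1(\rho)|\sin\psi$; the sign bookkeeping $\sgn\widetilde{\Psi}_\rho'=-\sgn\widetilde{c}_1(\rho)=(-1)^n$; and evenness plus $2\pi$-periodicity to identify the minimisers as $\phi\in2\delta\mathbb{Z}$ or $\phi\in\delta+2\delta\mathbb{Z}$. Your only addition is the inductive proof of the sine inequality, which the paper simply asserts.
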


It remains, therefore, to check that conditions \eqref{eq:dom}, \eqref{eq:b1neg} hold. This is done through a sequence of technical bounds.

Fix $n\ge3$ and $\rho\in(0,2\pi]$.  

\begin{lemma}\label{lem:upper}
For all $n\ge3$ and $0<\rho\le2\pi$,
\begin{equation}\label{eq:U}
\sum_{m=2}^\infty m^2 |\widetilde{b}_m(\rho)|\ \le\ U(\rho):=4.01\cdot\frac{2\pi}{4n+2}\cdot\frac{(\rho/2)^{4n}}{(4n)!}.
\end{equation}
\end{lemma}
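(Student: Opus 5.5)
We bound each coefficient $\widetilde{b}_m(\rho)$ directly from its definition \eqref{eq:Bm} via the power series of $J_{2nm}$, and then sum the resulting geometric-type series in $m$. The basic pointwise estimate is the standard bound
\[
|J_\nu(t)|\le \frac{(t/2)^\nu}{\Gamma(\nu+1)},\qquad t\ge0,\ \nu\ge0,
\]
which follows from the Poisson integral (\Cref{lem:poisson} in the appendix, presumably). Since $\widetilde{P}$ has circumradius one, $|\xb|=r\le 1$ on $\widetilde{P}$, so with $N:=2nm$,
\[
|\widetilde{b}_m(\rho)|\le \int_{\widetilde{P}} \frac{(\rho r/2)^{N}}{N!}\,r\,\dr r\,\dr\theta
\le \frac{(\rho/2)^N}{N!}\int_0^{2\pi}\!\!\int_0^{\widetilde{\mathcal{R}}(\theta)} r^{N+1}\dr r\,\dr\theta
\le \frac{(\rho/2)^N}{N!}\cdot\frac{2\pi}{N+2}.
\]
Here we use the crude bound $\widetilde{\mathcal{R}}\le1$. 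For $m=2$, this is exactly $\frac{2\pi}{4n+2}\frac{(\rho/2)^{4n}}{(4n)!}$, which explains the shape of $U(\rho)$. (One could sharpen the estimate using $\widetilde{\mathcal{R}}(\theta)<1$ away from vertices, but this should not be needed.)

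Next we sum the tail. Write $T_m:=m^2\frac{2\pi}{2nm+2}\frac{(\rho/2)^{2nm}}{(2nm)!}$, so that the claim reduces to $\sum_{m\ge2}T_m\le 4.01\,T_2/4$, i.e.\ $\sum_{m\ge 2}T_m/T_2\le 1.0025$. The ratio of consecutive terms is
\[
\frac{T_{m+1}}{T_m}=\frac{(m+1)^2}{m^2}\cdot\frac{2nm+2}{2n(m+1)+2}\cdot\frac{(\rho/2)^{2n}(2nm)!}{(2n(m+1))!}.
\]
With $\rho/2\le\pi$ and $n\ge3$, the last factor is at most $\pi^{2n}/\prod_{k=1}^{2n}(2nm+k)\le \pi^{2n}/(4n+1)^{2n}$ for $m\ge2$, which is at most $(\pi/13)^{6}\approx 2\times10^{-4}$ at $n=3$ and decreases in $n$. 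The prefactor $(m+1)^2/m^2\le 9/4$. Hence the ratio is at most $q\le 5\times10^{-4}$, and
\[
\sum_{m\ge2}T_m\le \frac{T_2}{1-q}\le 1.0005\,T_2\le \frac{4.01}{4}\,T_2 .
\]
This is the constant $4.01 = 4\times 1.0025$ in the statement, where the factor $4$ is $m^2$ at $m=2$. Combining the two steps gives \eqref{eq:U}.

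The argument has no real obstacle. The only care points are confirming that the Bessel bound holds for all $t\ge0$ rather than only small $t$ (it holds for all real $t\ge 0$ when $\nu>-1/2$, via Poisson's integral $|J_\nu(t)|\le \frac{(t/2)^\nu}{\Gamma(\nu+1/2)\Gamma(1/2)}\int_{-1}^1(1-s^2)^{\nu-1/2}\dr s$), and verifying that the ratio bound is uniform over $n\ge3$, $m\ge2$, $\rho\le2\pi$. The worst case is $n=3$, $m=2$, $\rho=2\pi$, which can be checked by hand.
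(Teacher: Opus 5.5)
Your proposal is correct and follows the paper's proof essentially step for step. You use the same Poisson bound together with the radial-moment estimate $\int_{\widetilde{P}}|\xb|^{2nm}\,\dr\xb\le 2\pi/(2nm+2)$, and the same geometric decay driven by $(\pi/(4n+1))^{2n}\le(\pi/13)^6<2\cdot10^{-4}$. The only cosmetic difference is that you apply a ratio test to the full term $T_m$, absorbing $(m+1)^2/m^2\le 9/4$ into $q$, whereas the paper factors out $\lambda^{m-2}$ and sums $\sum_{j\ge0}(j+2)^2\lambda^j$ in closed form; both give a constant comfortably below $4.01$.
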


We now bound $-\widetilde{b}_1(\rho)$ from below.  Define
\begin{equation}\label{eq:Phi}
\mathcal{T}_\rho(s):=\int_0^sJ_{2n}(\rho r)\,r\,\dr r,\qquad 0\le s\le1,
\end{equation}
which, by \Cref{lem:lorch} ($\rho s\le2\pi<j_{2n,1}$, so $J_{2n}(\rho r)>0$ for
$r\in(0,1]$), is strictly increasing on $[0,1]$, and set
\begin{equation}\label{eq:varrho12}
\widetilde{\mathcal{R}}_1:=\widetilde{\mathcal{R}}(3\delta/4)=\frac{\cos\delta}{\cos(3\delta/4)},\qquad
\widetilde{\mathcal{R}}_2:=\widetilde{\mathcal{R}}(\delta/4)=\frac{\cos\delta}{\cos(\delta/4)},
\qquad
\cos\delta\le\widetilde{\mathcal{R}}_2<\widetilde{\mathcal{R}}_1<1.
\end{equation}

\begin{lemma}\label{lem:pairing}
For all $n\ge3$ and $0<\rho\le2\pi$,
\begin{equation}\label{eq:pairing}
-\widetilde{b}_1(\rho)=4n\int_0^{\delta/2}
\left(\mathcal{T}_\rho\left(\widetilde{\mathcal{R}}(\frac\delta2+v)\right)
-\mathcal{T}_\rho\left(\widetilde{\mathcal{R}}(\frac\delta2-v)\right)\right)\sin(2nv)\,\dr v>0,
\end{equation}
and
\begin{equation}\label{eq:B1low}
-\widetilde{b}_1(\rho)\ \ge\ \frac{\pi\sqrt2}{4}\int_{\widetilde{\mathcal{R}}_2}^{\widetilde{\mathcal{R}}_1}J_{2n}(\rho r)\,r\,\dr r
\ \ge\ \frac{\pi\sqrt2}{4}\,(\widetilde{\mathcal{R}}_1-\widetilde{\mathcal{R}}_2)\,\widetilde{\mathcal{R}}_2\,
\min\left\{J_{2n}(\rho\widetilde{\mathcal{R}}_2),\,J_{2n}(\rho\widetilde{\mathcal{R}}_1)\right\}.
\end{equation}
\end{lemma}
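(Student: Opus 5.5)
We will prove Lemma~\ref{lem:pairing} by writing $\widetilde{b}_1(\rho)$ in polar coordinates and using the symmetries of $\widetilde{\mathcal{R}}$. First, by \eqref{eq:Bm} with $m=1$ and the definition \eqref{eq:Phi},
\[
\widetilde{b}_1(\rho)=\int_0^{2\pi}\cos(2n\theta)\,\mathcal{T}_\rho\bigl(\widetilde{\mathcal{R}}(\theta)\bigr)\,\dr\theta .
\]
The integrand is even and $2\delta$-periodic in $\theta$, since $\cos(2n\theta)$ has period $\pi/n=2\delta$. So the integral over $[0,2\pi]$ equals $4n\int_0^{\delta}\cos(2n\theta)\,\mathcal{T}_\rho(\widetilde{\mathcal{R}}(\theta))\,\dr\theta$, because there are $2n$ periods, each contributing twice the half-period integral.

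Next substitute $\theta=\frac\delta2\pm v$ with $v\in[0,\delta/2]$. Since $2n\delta/2=\pi/2$, we have
\[
\cos\bigl(2n(\tfrac\delta2\pm v)\bigr)=\cos(\tfrac\pi2\pm 2nv)=\mp\sin(2nv).
\]
Pairing the two halves gives exactly \eqref{eq:pairing}, with the sign flip producing $-\widetilde{b}_1$. Positivity then follows from three facts:
\begin{itemize}
\item $\widetilde{\mathcal{R}}$ is strictly increasing on $[0,\delta]$, so $\widetilde{\mathcal{R}}(\frac\delta2+v)>\widetilde{\mathcal{R}}(\frac\delta2-v)$ for $v>0$;
\item $\mathcal{T}_\rho$ is strictly increasing on $[0,1]$, as noted above, by Lemma~\ref{lem:lorch};
\item $\sin(2nv)>0$ on $(0,\delta/2)$.
\end{itemize}

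For \eqref{eq:B1low}, we drop part of the $v$-range, since the integrand is nonnegative. Restrict to $v\in[\delta/4,\delta/2]$. There $\sin(2nv)\ge\sin(\pi/4)=\sqrt2/2$, and by monotonicity of $\widetilde{\mathcal{R}}$ we have
\[
\widetilde{\mathcal{R}}(\tfrac\delta2+v)\ge\widetilde{\mathcal{R}}_1
\quad\text{and}\quad
\widetilde{\mathcal{R}}(\tfrac\delta2-v)\le\widetilde{\mathcal{R}}_2 .
\]
Hence the bracket is at least $\mathcal{T}_\rho(\widetilde{\mathcal{R}}_1)-\mathcal{T}_\rho(\widetilde{\mathcal{R}}_2)=\int_{\widetilde{\mathcal{R}}_2}^{\widetilde{\mathcal{R}}_1}J_{2n}(\rho r)\,r\,\dr r$. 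Multiplying by the length $\delta/4$ of the interval and by $4n\cdot\sqrt2/2$ gives the constant $4n\cdot\frac{\delta}{4}\cdot\frac{\sqrt2}{2}=\frac{\pi\sqrt2}{4}$, using $n\delta=\pi/2$.

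The final inequality in \eqref{eq:B1low} comes from bounding $r\ge\widetilde{\mathcal{R}}_2$ on the integration range and bounding $J_{2n}(\rho r)$ below by its minimum at the endpoints. This last step needs $J_{2n}$ to have no interior minimum on $[\rho\widetilde{\mathcal{R}}_2,\rho\widetilde{\mathcal{R}}_1]$. We take this from the known behaviour of Bessel functions: $J_{2n}$ is increasing and then decreasing on $[0,j_{2n,1}]$, because it increases up to $j'_{2n,1}$ and is positive until $j_{2n,1}>2\pi$ (Lemma~\ref{lem:lorch} and the toolbox in the appendix). A function that is unimodal on an interval is bounded below there by the minimum of its endpoint values, which gives the stated bound.

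The main obstacle is this unimodality fact, together with checking the symmetry bookkeeping (the factor $4n$ and the sign conventions). Everything else is elementary.
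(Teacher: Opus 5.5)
Your proposal is correct and follows the paper's proof essentially step for step: the polar form of $\widetilde{b}_1$, the reduction to $4n\int_0^\delta$ by evenness and $2\delta$-periodicity, the substitution $\theta=\frac\delta2\pm v$, the restriction to $v\in[\frac\delta4,\frac\delta2]$, and the endpoint-minimum argument. The unimodality you invoke is exactly \Cref{lem:unimodal}; note that there the decrease on $[j'_{2n,1},j_{2n,1})$ comes from the interlacing $j'_{\nu,1}<j_{\nu,1}<j'_{\nu,2}$, not from positivity of $J_{2n}$ alone.
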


\begin{corollary}\label{cor:lower}
For all $n\ge3$ and $0<\rho\le2\pi$,
\begin{equation}\label{eq:V}
-\widetilde{b}_1(\rho)\ \ge\ V(\rho):=
\frac{\pi\sqrt2}{4}\,(\widetilde{\mathcal{R}}_1-\widetilde{\mathcal{R}}_2)\,\widetilde{\mathcal{R}}_2^{\,2n+1}\,
\underline{\mathcal{J}}_{2n}\left(\pi^2\widetilde{\mathcal{R}}_1^2\right)\cdot\frac{(\rho/2)^{2n}}{(2n)!}\,,
\end{equation}
provided $\underline{\mathcal{J}}_{2n}(\pi^2\widetilde{\mathcal{R}}_1^2)>0$ \textup{(}which is verified in
\Cref{prop:master} below for every $n\ge3$, see also \Cref{ssec:proofmaster}\textup{)}; here $\underline{\mathcal{J}}_{2n}$ is the
polynomial of \Cref{lem:phi4} with $\nu=2n\ge 6$.
\end{corollary}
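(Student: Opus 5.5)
We start from the second inequality in \eqref{eq:B1low} of \Cref{lem:pairing}:
\[
-\widetilde{b}_1(\rho)\ \ge\ \frac{\pi\sqrt2}{4}\,(\widetilde{\mathcal{R}}_1-\widetilde{\mathcal{R}}_2)\,\widetilde{\mathcal{R}}_2\,
\min\left\{J_{2n}(\rho\widetilde{\mathcal{R}}_2),\,J_{2n}(\rho\widetilde{\mathcal{R}}_1)\right\}.
\]
It then suffices to bound each of $J_{2n}(\rho\widetilde{\mathcal{R}}_j)$, $j=1,2$, from below by
\[
\widetilde{\mathcal{R}}_2^{\,2n}\,\underline{\mathcal{J}}_{2n}\left(\pi^2\widetilde{\mathcal{R}}_1^2\right)\frac{(\rho/2)^{2n}}{(2n)!}.
\]
Multiplying this by the extra factor $\widetilde{\mathcal{R}}_2$ produces exactly the power $\widetilde{\mathcal{R}}_2^{\,2n+1}$ in \eqref{eq:V}.

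The tool is the factorisation $J_\nu(t)=\frac{(t/2)^\nu}{\Gamma(\nu+1)}\,g_\nu(t^2/4)$, where
\[
g_\nu(y)=\sum_{k\ge0}\frac{(-y)^k}{k!\,(\nu+1)_k}.
\]
I expect \Cref{lem:phi4} to supply a polynomial $\underline{\mathcal{J}}_\nu$, presumably a truncation of this alternating series at an odd-indexed term, such that $g_\nu(y)\ge\underline{\mathcal{J}}_\nu(y)$ on the relevant range. I also expect $\underline{\mathcal{J}}_\nu$ to be nonincreasing in $y$ there, or else that $g_\nu$ itself is decreasing and bounded below by $\underline{\mathcal{J}}_\nu$ at the right endpoint. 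With $t=\rho\widetilde{\mathcal{R}}_j$ and $\nu=2n$ we get
\[
J_{2n}(\rho\widetilde{\mathcal{R}}_j)=\widetilde{\mathcal{R}}_j^{\,2n}\,\frac{(\rho/2)^{2n}}{(2n)!}\,g_{2n}\!\left(\rho^2\widetilde{\mathcal{R}}_j^2/4\right).
\]

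We handle the two factors separately. For the power, \eqref{eq:varrho12} gives $\widetilde{\mathcal{R}}_j\ge\widetilde{\mathcal{R}}_2$, so $\widetilde{\mathcal{R}}_j^{\,2n}\ge\widetilde{\mathcal{R}}_2^{\,2n}$ for both $j$; this step needs the other factor to be positive. For the argument of $g_{2n}$, since $\rho\le2\pi$ and $\widetilde{\mathcal{R}}_j\le\widetilde{\mathcal{R}}_1$,
\[
y_j:=\frac{\rho^2\widetilde{\mathcal{R}}_j^2}{4}\in\left[0,\ \pi^2\widetilde{\mathcal{R}}_1^2\right].
\]
Monotonicity then gives $g_{2n}(y_j)\ge\underline{\mathcal{J}}_{2n}(\pi^2\widetilde{\mathcal{R}}_1^2)$, and this quantity is positive by assumption. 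The monotone decrease of $g_\nu$ on $[0,\pi^2]$ for $\nu\ge6$ can be checked directly: the terms of the alternating series decrease in size because $y<(k+1)(\nu+k+1)$. Alternatively it follows because
\[
g_\nu'(y)=-\frac{g_{\nu+1}(y)}{\nu+1}<0,
\]
with $g_{\nu+1}>0$ there since $2\sqrt y<j_{\nu+1,1}$ by \Cref{lem:lorch}.

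The step I expect to need the most care is matching the exact form of $\underline{\mathcal{J}}_{2n}$ in \Cref{lem:phi4}. If it is only a lower bound at each point rather than a monotone function, I would route the argument through the monotonicity of $g_{2n}$ as above:
\[
g_{2n}(y_j)\ge g_{2n}\!\left(\pi^2\widetilde{\mathcal{R}}_1^2\right)\ge\underline{\mathcal{J}}_{2n}\!\left(\pi^2\widetilde{\mathcal{R}}_1^2\right).
\]
Combining the two bounds in the $\min$ and inserting them into \eqref{eq:B1low} gives \eqref{eq:V}.
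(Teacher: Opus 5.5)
Your proof is correct and follows the paper's argument. The paper also writes $J_{2n}(\rho\widetilde{\mathcal{R}}_i)=\frac{(\rho\widetilde{\mathcal{R}}_i/2)^{2n}}{(2n)!}\mathcal{J}_{2n}(y_i)$ and bounds it below using \Cref{lem:phi4}(i)--(ii) together with $\widetilde{\mathcal{R}}_i\ge\widetilde{\mathcal{R}}_2$, $y_i\le\pi^2\widetilde{\mathcal{R}}_1^2$ and the positivity of $\underline{\mathcal{J}}_{2n}$; your fallback ``direct'' check that $g_\nu$ is monotone via decreasing terms fails at $k=0$ for $n=3$, since $y$ can reach $\pi^2\widetilde{\mathcal{R}}_1^2\approx 8.67>\nu+1=7$, but this does no harm because your Lorch-based argument via $g_\nu'=-g_{\nu+1}/(\nu+1)$ is valid and \Cref{lem:phi4}(ii) makes the detour unnecessary anyway.
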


Finally, we have

\begin{proposition}\label{prop:master}
For every $n\ge3$,
\begin{equation}\label{eq:master}
\frac{\pi^{2n}\,(2n)!}{(4n)!}\;<\;
\mathcal{S}_n:=\frac{\sqrt2\,(4n+2)}{32.08}\;
(\widetilde{\mathcal{R}}_1-\widetilde{\mathcal{R}}_2)\,\widetilde{\mathcal{R}}_2^{\,2n+1}\,\underline{\mathcal{J}}_{2n}\left(\pi^2\widetilde{\mathcal{R}}_1^2\right),
\end{equation}
where $\widetilde{\mathcal{R}}_1,\widetilde{\mathcal{R}}_2$ are given by \eqref{eq:varrho12}. 
Consequently
\begin{equation}\label{eq:UV}
\sum_{m=2}^\infty m^2|\widetilde{b}_m(\rho)|\ \le\ U(\rho)\ <\ V(\rho)\ \le\ -\widetilde{b}_1(\rho)
\qquad\text{for all }0<\rho\le2\pi.
\end{equation}
\end{proposition}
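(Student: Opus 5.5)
The claim has two parts: the numerical inequality \eqref{eq:master} for every $n\ge3$, and the consequence \eqref{eq:UV}. I would do the consequence first, since it is pure algebra. By \Cref{lem:upper} and \Cref{cor:lower}, it suffices to show $U(\rho)<V(\rho)$ for $0<\rho\le2\pi$.

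The ratio is
\[
\frac{U(\rho)}{V(\rho)}=\frac{4.01\cdot 2\pi\cdot 4\,(2n)!}{(4n+2)\,\pi\sqrt2\,(\widetilde{\mathcal{R}}_1-\widetilde{\mathcal{R}}_2)\widetilde{\mathcal{R}}_2^{2n+1}\underline{\mathcal{J}}_{2n}(\pi^2\widetilde{\mathcal{R}}_1^2)\,(4n)!}\left(\frac{\rho}{2}\right)^{2n}.
\]
This is increasing in $\rho$, so the worst case is $\rho=2\pi$, where $(\rho/2)^{2n}=\pi^{2n}$. Since $4.01\cdot8=32.08$, the condition $U(2\pi)<V(2\pi)$ rearranges exactly into \eqref{eq:master}. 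Positivity of $\underline{\mathcal{J}}_{2n}(\pi^2\widetilde{\mathcal{R}}_1^2)$ is needed for \Cref{cor:lower}, and it follows from \eqref{eq:master} itself, because the left-hand side is positive and all the other factors in $\mathcal{S}_n$ are positive by \eqref{eq:varrho12}.

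For \eqref{eq:master} itself, I split into a finite range $3\le n\le N_0$ and a tail $n>N_0$.

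\textbf{Finite range.} Every quantity is an explicit elementary expression in $\delta=\pi/(2n)$: $\widetilde{\mathcal{R}}_1,\widetilde{\mathcal{R}}_2$ involve cosines, and $\underline{\mathcal{J}}_{2n}$ is an explicit polynomial (from \Cref{lem:phi4}, presumably a truncated Bessel series lower bound of the form $1-\frac{x}{4(\nu+1)}+\dots$). So for each such $n$ I would check the inequality with interval arithmetic, as described in \Cref{ssec:comp}.

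\textbf{Tail.} Here I would get asymptotic estimates.
\begin{itemize}
\item Since $\cos\delta/\cos(c\delta)=1-\tfrac{1-c^2}{2}\delta^2+O(\delta^4)$, we have $\widetilde{\mathcal{R}}_1-\widetilde{\mathcal{R}}_2=\tfrac{\delta^2}{4}(1+O(\delta^2))$, which is of order $\pi^2/(16n^2)$.
\item Likewise $\widetilde{\mathcal{R}}_2^{2n+1}\ge(1-\delta^2/2)^{2n+1}$, which tends to $1$ and is bounded below by $1-\pi^2/(4n)\cdot(1+o(1))$.
\item $\underline{\mathcal{J}}_{2n}(\pi^2\widetilde{\mathcal{R}}_1^2)\ge1-\pi^2/(4(2n+1))-\dots$, which is bounded below by, say, $1/2$ for $n\ge3$.
\end{itemize}
Together these give $\mathcal{S}_n\gtrsim c/n$ with an explicit constant $c$. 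For the left-hand side I would use the crude bound
\[
\frac{(2n)!}{(4n)!}\le\frac{1}{(2n+1)^{2n}},
\]
so that $\pi^{2n}(2n)!/(4n)!\le(\pi/(2n+1))^{2n}$. This decays super-exponentially, so it beats $c/n$ with enormous room once $n\ge N_0$ for a modest $N_0$; the elementary inequalities can be made fully explicit with Taylor remainders such as $\cos x\ge1-x^2/2$ and $\cos x\le1-x^2/2+x^4/24$. In fact the margin looks large even at $n=3$: at $n=3$ the left-hand side is about $961\cdot720/4.8\times10^{8}\approx1.4\times10^{-3}$, while $\mathcal{S}_3$ is roughly $0.6\cdot0.05\approx0.03$. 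So a uniform explicit argument might cover all $n\ge3$, and the certified check would then only be needed for a few small $n$.

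\textbf{Main obstacle.} I expect the only genuine difficulty to be producing clean explicit lower bounds for $\widetilde{\mathcal{R}}_1-\widetilde{\mathcal{R}}_2$ and for $\underline{\mathcal{J}}_{2n}$ that hold uniformly in $n$. I would handle this by writing
\[
\widetilde{\mathcal{R}}_1-\widetilde{\mathcal{R}}_2=\cos\delta\,\frac{\cos(\delta/4)-\cos(3\delta/4)}{\cos(\delta/4)\cos(3\delta/4)}\ge\cos\delta\cdot2\sin(\delta/2)\sin(\delta/4)
\]
and then using $\sin x\ge x-x^3/6$.
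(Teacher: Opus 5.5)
Your plan is correct and is essentially the paper's proof: the same reduction of \eqref{eq:UV} to the single case $\rho=2\pi$, a certified check of finitely many small cases (the paper does $n=3,4,5$ from tabulated certified values), and for $n\ge6$ the same elementary bounds, namely $(4n)!/(2n)!\ge(2n+1)^{2n}$, $\widetilde{\mathcal{R}}_1-\widetilde{\mathcal{R}}_2\ge 2\cos\delta\sin\frac\delta2\sin\frac\delta4$, Bernoulli for $\widetilde{\mathcal{R}}_2^{2n+1}$, and \Cref{lem:phi4}(ii)--(iii), which give $\underline{\mathcal{J}}_{2n}(\pi^2\widetilde{\mathcal{R}}_1^2)\ge1-\pi^2/(2n+1)$. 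One correction to your numerical asides: the argument $y=\pi^2\widetilde{\mathcal{R}}_1^2$ of $\underline{\mathcal{J}}_{2n}$ already plays the role of $(x/2)^2$, so the linear term is $-y/(2n+1)$, not $-y/(4(2n+1))$; this gives $\underline{\mathcal{J}}_6(\pi^2\widetilde{\mathcal{R}}_1^2)\approx0.217$ (not $\ge\frac12$) and $\mathcal{S}_3\approx3.3\cdot10^{-3}$ (not $0.03$), so the margin at $n=3$ is only a factor of about $2.3$, and crude uniform bounds cannot reach $n=3$ (Bernoulli alone gives only $\widetilde{\mathcal{R}}_2^{7}\ge0.04$ there), which is why the certified small-$n$ check is genuinely needed rather than optional.
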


Together, these results easily imply \Cref{thm:regular_polygon_active_direction}. 
By \Cref{prop:master}, hypothesis \eqref{eq:dom}, and, therefore, 
inequality \eqref{eq:b1neg}, hold for every $n\ge3$ and every $\rho\in(0,2\pi]$.  By \eqref{eq:kappa2pi},
we have $\widetilde\kappa\in(0,2\pi]$, so \Cref{prop:domination}(ii) applies at
$\rho=\widetilde\kappa$ and identifies the global minimisers of
$\widetilde\Phi_{\widetilde\kappa}$, and, equivalently, the minimising directions of $\widetilde P_{2n}$,  as $\mathcal M$ for
$n$ even and $\mathcal V$ for $n$ odd.  Since $P_{2n}$ and $\widetilde P_{2n}$
have the same minimising directions by \eqref{eq:scaleFT}, this proves \Cref{thm:regular_polygon_active_direction}  for
$n\ge3$; the case $n=2$ is \Cref{ex:square}.

\subsection{Proofs of \Cref{prop:fourier,prop:domination}}

\begin{proof}[Proof of \Cref{prop:fourier}]
We have, with $\xb=r\ev{\theta}$ and $\xib=\rho\ev{\phi}$, 
\[
\begin{split}
\fhat{\widetilde{P}}(\rho\,\ev\phi) &= \int_{\widetilde{P}} \cos(\xb\cdot\xib)\,\dr \xb
= \int_{\widetilde{P}} \cos\left(r\rho(\cos\theta\cos\phi+\sin\theta\sin\phi)\right)\,\dr \xb\\
&= \int_{\widetilde{P}} \cos\left(r\rho \cos(\theta - \phi)\right)\,\dr \xb.
\end{split}
\]

The Jacobi--Anger expansion, see  \cite[\S2.22]{Watson} or
\cite[\S10.12.3]{DLMF}, states that for all real $z,\alpha$,
\[
\cos(z\cos\alpha)=J_0(z)+2\sum_{k=1}^\infty (-1)^kJ_{2k}(z)\cos(2k\alpha).
\]
With $z=\rho r$ and $\alpha=\theta-\phi$ and the bound
$|J_\nu(z)|\le(z/2)^\nu/\nu!$ of \Cref{lem:poisson}, the series is
dominated on $\widetilde{P}$ (where $|\xb|\le1$) by $2\sum_k (\rho/2)^{2k}/(2k)!<\infty$, hence
converges uniformly on $\widetilde{P}$, and term-by-term integration over $\widetilde{P}$ is legitimate, yielding
\[
\fhat{\widetilde{P}}(\rho\,\ev\phi)=\widetilde{a}_0(\rho)+2\sum_{k=1}^\infty (-1)^k
\int_{\widetilde{P}} J_{2k}(\rho\,|\xb|)\cos\left(2k(\theta-\phi)\right)\,\dr \xb.
\]
By dihedral symmetry, the integrals vanish unless $k=nm$, $m\in\mathbb{N}$, which yields, after minor simplifications, \eqref{eq:fourier}--\eqref{eq:Bm}.  Finally, by \Cref{lem:poisson},
$|\widetilde{b}_m(\rho)|\le(\rho/2)^{2nm}\pi/(2nm)!$, so for each fixed $\rho$ the coefficient
sequence decays superexponentially, which proves the remaining assertions.
\end{proof}

\begin{proof}[Proof of \Cref{prop:domination}]

By \Cref{prop:fourier} we may differentiate term-wise:
\[
\widetilde{\Psi}_\rho'(\psi)=-\widetilde{c}_1(\rho)\sin\psi-\sum_{m=2}^\infty m \widetilde{c}_m(\rho)\sin m\psi.
\]
As for every $m\in\mathbb{N}$ and every $\psi\in[0,\pi]$, we have $|\sin m\psi|\le m\sin\psi$, and since $|\widetilde{c}_m(\rho)|=2|\widetilde{b}_m(\rho)|$, we have, by the assumption in the statement of the proposition, 
\[
\left|\sum_{m=2}^\infty m \widetilde{c}_m(\rho)\sin m\psi\right|
\le\left(\sum_{m=2}^\infty m^2|\widetilde{c}_m(\rho)|\right)\sin\psi
<|\widetilde{c}_1(\rho)|\sin\psi,
\]
hence 
\[
\begin{split}
\sgn(\widetilde{\Psi}_\rho'(\psi))&=\sgn\left(-\widetilde{c}_1(\rho)\sin\psi-\sum_{m=2}^\infty  m \widetilde{c}_m(\rho)\sin m\psi\right)
=\sgn\left(-\widetilde{c}_1(\rho)\sin\psi\right)\\
&=\sgn(-\widetilde{c}_1(\rho))=\sgn\left(-2(-1)^n \widetilde{b}_1(\rho)\right)=(-1)^n
\end{split}
\]
for all of $(0,\pi)$.  Thus, $\widetilde{\Psi}_\rho$ is strictly
increasing on $[0,\pi]$ for $n$ even and strictly decreasing for $n$ odd, which proves (i).

For (ii), suppose $n$ even (the odd case is similar).  Strict monotonicity gives
$\widetilde{\Psi}(0)<\widetilde{\Psi}(\psi)$ for every $\psi\in(0,\pi]$.  As $\widetilde{\Psi}$ is even and $2\pi$-periodic,
the global minimisers of $\widetilde{\Psi}$ on $\R$ are exactly $\{2\pi k:k\in\mathbb{Z}\}$. Going back to the variable $\phi$ by \eqref{eq:psi}, we conclude that global minimisers of $\widetilde{\Phi}_\rho(\phi)$ satisfy
$\phi= 0\pmod{\pi/n}$, that is, they are exactly the $2n$ side midpoint
directions.  For $n$ odd the minimisers are $\phi\equiv\delta\pmod{\pi/n}$, that is the $2n$ vertex directions.
\end{proof}

\subsection{Proofs of technical bounds}

\begin{proof}[Proof of \Cref{lem:upper}]
We use the radial moments
\begin{equation}\label{eq:moments}
\widetilde{\mu}_p:=\int_{\widetilde{P}}|\xb|^p\,\dr \xb
=\int_0^{2\pi} \int_0^{\widetilde{\mathcal{R}}(\theta)} r^{p+1}\,\dr r\,\dr \theta=
\frac{1}{p+2}\int_0^{2\pi}\left(\widetilde{\mathcal{R}}(\theta)\right)^{p+2}\, \dr\theta
\ \le\ \frac{2\pi}{p+2},\qquad p\ge0,
\end{equation}
where we used the fact that $\widetilde{\mathcal{R}}(\theta)\le 1$.

By \Cref{lem:poisson} and \eqref{eq:moments}, for every $m\ge2$,
\[
|\widetilde{b}_m(\rho)|\le\int_{\widetilde{P}}\left|J_{2nm}(\rho|\xb|)\right|\,\dr \xb
\le\frac{(\rho/2)^{2nm}}{(2nm)!}\,\widetilde{\mu}_{2nm}
\le\frac{2\pi}{2nm+2}\cdot\frac{(\rho/2)^{2nm}}{(2nm)!}
\le\frac{2\pi}{4n+2}\cdot\frac{(\rho/2)^{2nm}}{(2nm)!}\,.
\]
Next, $(2nm)!=(4n)!\,(4n+1)(4n+2)\cdots(2nm)\ge(4n)!\,(4n+1)^{2n(m-2)}$, so with
\[
\lambda:=\left(\frac{\rho/2}{4n+1}\right)^{2n},
\]
we have
\[
\frac{(\rho/2)^{2nm}}{(2nm)!}\le\frac{(\rho/2)^{4n}}{(4n)!}\,\lambda^{\,m-2}.
\]
Since $\rho/2\le\pi$ and $n\ge3$, $0<\frac{\pi}{4n+1}\le\frac{\pi}{13}<1$ and
$2n\ge6$ give $\lambda\le(\pi/13)^6<2\cdot10^{-4}$.  Finally, by the standard 
identities $\sum_{j\ge0}\lambda^j=\frac1{1-\lambda}$, $\sum_{j\ge0}j\lambda^j=\frac{\lambda}{(1-\lambda)^2}$,
$\sum_{j\ge0}j^2\lambda^j=\frac{\lambda(1+\lambda)}{(1-\lambda)^3}$, we estimate
\[
\sum_{m=2}^\infty m^2\lambda^{\,m-2}
=\sum_{j\ge0}(j+2)^2\lambda^{\,j}
=\frac4{1-\lambda}+\frac{4\lambda}{(1-\lambda)^2}+\frac{\lambda(1+\lambda)}{(1-\lambda)^3}
\le 4.0009+0.0009+0.0003<4.01
\]
for $\lambda\le2\cdot10^{-4}$.  Combining the bounds yields \eqref{eq:U}.
\end{proof}

\begin{proof}[Proof of \Cref{lem:pairing}]
Passing to polar coordinates in \eqref{eq:Bm} and integrating in the radial variable
first,
\[
\widetilde{b}_1(\rho)=\int_0^{2\pi}\mathcal{T}_\rho\left(\widetilde{\mathcal{R}}(\theta)\right)\cos(2n\theta)\,d\theta
=:\int_0^{2\pi}H(\theta)\,\dr\theta.
\]
Both $\widetilde{\mathcal{R}}(\theta)$ and $\cos(2n\theta)$ are even and have the period $2\delta=\pi/n$, hence
so is $H(\theta)$, and
\[
\int_0^{2\pi}H(\theta)\,\dr\theta = 4n\int_0^{\delta}H(\theta)\,\dr\theta.
\]
Substituting $\theta=\frac\delta2+v$, $v\in\left[-\frac\delta2,\frac\delta2\right]$, and using
$\cos\left(2n(\frac\delta2+v)\right)=\cos(\frac\pi2+2nv)=-\sin2nv$, we get
\[
\begin{split}
\int_0^\delta H(\theta)\,\dr\theta
&=-\int_{-\delta/2}^{\delta/2}\mathcal{T}_\rho\left(\widetilde{\mathcal{R}}\left(\frac\delta2+v\right)\right)\sin(2nv)\,\dr v
\\&=-\int_0^{\delta/2}\left(\mathcal{T}_\rho\left(\widetilde{\mathcal{R}}\left(\frac\delta2+v\right)\right)
-\mathcal{T}_\rho\left(\widetilde{\mathcal{R}}\left(\frac\delta2-v\right)\right)\right)\sin(2nv)\,\dr v.
\end{split}
\]
This proves the identity in \eqref{eq:pairing}.  

For
$v\in\left(0,\frac\delta2\right]$ we have $0\le\frac\delta2-v<\frac\delta2+v\le\delta$, hence
$\widetilde{\mathcal{R}}(\frac\delta2+v)>\widetilde{\mathcal{R}}(\frac\delta2-v)$ (as $\widetilde{\mathcal{R}}$ strictly increasing on
$[0,\delta]$), hence the bracket in \eqref{eq:pairing} is strictly positive
($\mathcal{T}_\rho$ strictly increasing); also $\sin2nv>0$ since $2nv\in(0,\frac\pi2]$.
Thus the integrand in \eqref{eq:pairing} is positive and continuous, and
$-\widetilde{b}_1(\rho)>0$.

For the quantitative bound, restrict the integral \eqref{eq:pairing} to
$v\in\left[\frac\delta4,\frac\delta2\right]$, thus
$2nv\in\left[\frac\pi4,\frac\pi2\right]$,  $\sin2nv\ge\sin\frac\pi4=\frac{\sqrt2}2$,
and $\frac\delta2-v\in\left[0,\frac\delta4\right]$,
$\frac\delta2+v\in\left[\frac{3\delta}4,\delta\right]$. By the monotonicity of $\widetilde{\mathcal{R}}$
and $\mathcal{T}_\rho$,
\[
\mathcal{T}_\rho\left(\widetilde{\mathcal{R}}(\frac\delta2+v)\right)-\mathcal{T}_\rho\left(\widetilde{\mathcal{R}}(\frac\delta2-v)\right)
\ \ge\ \mathcal{T}_\rho(\widetilde{\mathcal{R}}_1)-\mathcal{T}_\rho(\widetilde{\mathcal{R}}_2)
=\int_{\widetilde{\mathcal{R}}_2}^{\widetilde{\mathcal{R}}_1}J_{2n}(\rho r)\,r\,\dr r.
\]
Since the discarded part of the integrand is nonnegative,
\[
-\widetilde{b}_1(\rho)\ \ge\ 4n\cdot\frac{\delta}{4}\cdot\frac{\sqrt2}{2}\cdot
\int_{\widetilde{\mathcal{R}}_2}^{\widetilde{\mathcal{R}}_1}J_{2n}(\rho r)\,r\,dr
=\frac{\pi\sqrt2}{4}\int_{\widetilde{\mathcal{R}}_2}^{\widetilde{\mathcal{R}}_1}J_{2n}(\rho r)\,r\,dr,
\]
using $n\delta=\pi/2$.  Finally $r\ge\widetilde{\mathcal{R}}_2$ on the integration range, and by
\Cref{lem:unimodal} (applicable since
$[\rho\widetilde{\mathcal{R}}_2,\rho\widetilde{\mathcal{R}}_1]\subset(0,2\pi]\subset(0,j_{2n,1})$ by
\Cref{lem:lorch}) the minimum of $J_{2n}$ on $[\rho\widetilde{\mathcal{R}}_2,\rho\widetilde{\mathcal{R}}_1]$ is
attained at an endpoint.  This gives the second inequality in \eqref{eq:B1low}.
\end{proof}

\begin{proof}[Proof of \Cref{cor:lower}]
For $i\in\{1,2\}$ set $y_i:=(\rho\widetilde{\mathcal{R}}_i/2)^2$.  Then
$0<y_i\le\pi^2\widetilde{\mathcal{R}}_1^2<\pi^2$, so by \Cref{lem:phi4}(i)--(ii),
\[
J_{2n}(\rho\widetilde{\mathcal{R}}_i)=\frac{(\rho\widetilde{\mathcal{R}}_i/2)^{2n}}{(2n)!}\,\mathcal{J}_{2n}(y_i)
\ \ge\ \frac{(\rho\widetilde{\mathcal{R}}_2/2)^{2n}}{(2n)!}\,\underline{\mathcal{J}}_{2n}(y_i)
\ \ge\ \frac{(\rho\widetilde{\mathcal{R}}_2/2)^{2n}}{(2n)!}\,\underline{\mathcal{J}}_{2n}(\pi^2\widetilde{\mathcal{R}}_1^2),
\]
where we used $\widetilde{\mathcal{R}}_i\ge\widetilde{\mathcal{R}}_2$, $\mathcal{J}_{2n}\ge\underline{\mathcal{J}}_{2n}$ on $[0,\pi^2]$, and the
monotonicity of $\underline{\mathcal{J}}_{2n}$ together with $y_i\le\pi^2\widetilde{\mathcal{R}}_1^2$.  (If
$\underline{\mathcal{J}}_{2n}(\pi^2\widetilde{\mathcal{R}}_1^2)>0$, the first inequality also uses $\underline{\mathcal{J}}_{2n}(y_i)>0$, which
holds since $\underline{\mathcal{J}}_{2n}$ is decreasing.)  Insert this into \eqref{eq:B1low} and use
$(\widetilde{\mathcal{R}}_2)^{2n}\cdot\widetilde{\mathcal{R}}_2=\widetilde{\mathcal{R}}_2^{2n+1}$.
\end{proof}

\begin{proof}[Proof of \Cref{prop:master}]

We will show here that \eqref{eq:master} implies \eqref{eq:UV}. The proof of \eqref{eq:master} is postponed to the appendix.  By
\eqref{eq:U} and \eqref{eq:V},
\[
\frac{U(\rho)}{V(\rho)}
=\left(\frac \rho2\right)^{2n}\cdot
\frac{4.01\cdot\frac{2\pi}{4n+2}\cdot\frac{1}{(4n)!}}
{\frac{\pi\sqrt2}{4}(\widetilde{\mathcal{R}}_1-\widetilde{\mathcal{R}}_2)\widetilde{\mathcal{R}}_2^{2n+1}\underline{\mathcal{J}}_{2n}(\pi^2\widetilde{\mathcal{R}}_1^2)\cdot
\frac{1}{(2n)!}}\,,
\]
which is increasing in $\rho$ on $(0,2\pi]$; hence $U(\rho)<V(\rho)$ for all such
$\rho$ iff it holds at $\rho=2\pi$, and at $\rho=2\pi$ the inequality $U<V$ rearranges
(multiply both sides by $\frac{(4n+2)(2n)!}{8.02\,\pi\cdot\pi^{2n}}$, note
$4\cdot 8.02=32.08$) precisely to \eqref{eq:master}.  It remains to prove
\eqref{eq:master}, together with $\underline{\mathcal{J}}_{2n}(\pi^2\widetilde{\mathcal{R}}_1^2)>0$, for every $n\ge3$, which is done in \Cref{ssec:proofmaster}.
\end{proof}

\section{Regular polygons in the plane}
\label{section:regular-polygons-in-the-plane}

In this section we prove \Cref{thm:monot}, using throughout the notation of \Cref{ssec:polynotation}: $P_{2n}$ is the regular $2n$-gon of \emph{area $\pi$}, $\widetilde{P}_{2n}$ is its rescaling of circumradius one, $R_n$ and $A_n$ are the circumradius and the apothem of $P_{2n}$, and $\mathcal{R}=R_n\widetilde{\mathcal{R}}$ is its radial function. By \Cref{thm:regular_polygon_active_direction}, $\kappa(P_{2n})$ is the first positive zero of an explicit entire function of one variable (\Cref{cor:endpoint} below), and the comparisons in \Cref{thm:monot} split into two regimes: for $2\le n\le21$ the relevant first zeros are enclosed rigorously by interval arithmetic (\Cref{prop:finite-kappa-comparison,prop:bridge}), while for $n\ge21$ we prove the effective asymptotics of \Cref{thm:effective} by comparison with $\fhat{\D}$. The mechanism behind the smallness of $\kappa(P_{2n})-\jone$ is that the area normalisation cancels the first \emph{two} terms in the expansion of the angular mean of $\fhat{P_{2n}}$ around the disk, leaving a \emph{cubic} correction which produces the $n^{-6}$ term in \eqref{eq:effective}. All computer-assisted steps reduce to the finite list of checks described in \Cref{ssec:certified}.

\subsection{Reduction to one function of one variable}\label{ssec:endpoint-reduction}

By \Cref{thm:regular_polygon_active_direction}, the set of minimising directions of $P_{2n}$ is $\mathcal{M}$ if $n$ is even, and $\mathcal{V}$ if $n$ is odd. We fix one of them, and consider the corresponding radial profile of the Fourier transform,
\begin{equation}\label{eq:qdef}
\ev{}^{\star}:=
\begin{cases}
\ev0,&n\text{ even (a side-midpoint direction)},\\
\ev\delta,&n\text{ odd (a vertex direction)},
\end{cases}
\qquad\qquad
q_n(\rho):=\fhat{P_{2n}}\left(\rho\,\ev{}^{\star}\right).
\end{equation}

The profile $q_n$ has a simple closed form. Below, $\operatorname{sinc}z:=\frac{\sin z}{z}$ for $z\ne0$ and $\operatorname{sinc}0:=1$.

\begin{lemma}\label{lem:qformula}
For every $\rho>0$ and every $\phi\in\R$,
\begin{equation}\label{eq:qformula}
\widetilde{\Phi}_\rho(\phi)=\fhat{\widetilde{P}_{2n}}\left(\rho\ev\phi\right)
=4\cos\delta\sin\delta\sum_{k=0}^{n-1}\cos^{2}\alpha_k\,
\operatorname{sinc}\left(\rho\cos\delta\cos\alpha_k\right)
\operatorname{sinc}\left(\rho\sin\delta\sin\alpha_k\right),
\end{equation}
where $\alpha_k:=\phi-\frac{k\pi}{n}$, and each summand is an entire function of $\rho$. Consequently, by \eqref{eq:scaleFT},
\begin{equation}\label{eq:qn-scaled}
q_n(\rho)=R_n^{2}\,\widetilde{\Phi}_{R_n\rho}\left(\phi^{\star}\right),
\end{equation}
where $\phi^{\star}=0$ ($n$ even), resp.\ $\phi^{\star}=\delta$ ($n$ odd), is the polar angle of the minimising direction $\ev{}^{\star}$.
\end{lemma}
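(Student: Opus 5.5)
The plan is to prove \eqref{eq:qformula} by comparing radial derivatives rather than evaluating the Fourier integral directly. Fix $\phi$ and write $F(\rho):=\fhat{\widetilde{P}_{2n}}(\rho\ev\phi)$. Let $G(\rho)$ denote the right-hand side of \eqref{eq:qformula}, and set $a:=\cos\delta$ and $h:=\sin\delta$ (the apothem and the half side length of $\widetilde{P}_{2n}$).

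\textbf{Step 1: an Euler-type identity for $F$.} Since $\xib\cdot\nabla_{\xib}\er^{\ir\xib\cdot\xb}=\xb\cdot\nabla_{\xb}\er^{\ir\xib\cdot\xb}$, the divergence theorem on $\widetilde{P}_{2n}$ gives
\[
\frac{\dr}{\dr\rho}\left(\rho^2F(\rho)\right)=\rho\int_{\partial\widetilde{P}_{2n}}(\xb\cdot\nu)\,\er^{\ir\rho\ev\phi\cdot\xb}\,\dr s .
\]
On each side $\xb\cdot\nu=a$ is constant. The side with outer normal $\ev{k\pi/n}$ is the segment $a\ev{k\pi/n}+t\,h\,\ev{k\pi/n}^{\perp}$, $t\in[-1,1]$. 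Pairing it with the opposite side, its contribution is
\[
4ah\cos\left(\rho a\cos\alpha_k\right)\operatorname{sinc}\left(\rho h\sin\alpha_k\right).
\]
Hence $(\rho^2F)'=4ah\,\rho\sum_{k=0}^{n-1}\cos(\rho a\cos\alpha_k)\operatorname{sinc}(\rho h\sin\alpha_k)$.

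\textbf{Step 2: differentiate $\rho^2G$.} Put $A_k:=a\cos\alpha_k$ and $B_k:=h\sin\alpha_k$. When $A_kB_k\neq0$, each summand of $\rho^2G$ equals $4ah\cos^2\alpha_k\sin(\rho A_k)\sin(\rho B_k)/(A_kB_k)$. Its derivative is
\[
4ah\,\rho\cos^2\alpha_k\left[\cos(\rho A_k)\operatorname{sinc}(\rho B_k)+\operatorname{sinc}(\rho A_k)\cos(\rho B_k)\right].
\]
The degenerate cases $A_kB_k=0$ follow by continuity. Both $\rho^2F$ and $\rho^2G$ vanish at $\rho=0$, and $\rho^2F$ is $C^1$ with the stated derivative. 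It therefore suffices to prove the trigonometric identity
\[
\sum_{k}\cos^2\alpha_k\left[\cos(\rho A_k)\operatorname{sinc}(\rho B_k)+\operatorname{sinc}(\rho A_k)\cos(\rho B_k)\right]=\sum_k\cos(\rho A_k)\operatorname{sinc}(\rho B_k).
\]

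\textbf{Step 3: the identity (main obstacle).} I would multiply through by $\rho$ and use product-to-sum formulas. The combinations $A_k\pm B_k=\cos(\alpha_k\mp\delta)$ are exactly the projections onto $\ev\phi$ of the two vertices of side $k$. Each vertex is shared by two adjacent sides, and $\alpha_{k}-\delta=\alpha_{k+1}+\delta$ modulo the index shift, using $\pi/n=2\delta$. After rewriting, every term becomes $\sin(\rho\cos\beta_j)$ times a rational trigonometric coefficient in the vertex angles $\beta_j$. The identity should then reduce to the vanishing of these coefficients vertex by vertex, as a telescoping cancellation between neighbouring sides, after using $\cos^2\alpha-1=-\sin^2\alpha$ on the left-hand side. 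Verifying this coefficient identity is where I expect the real work.

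If the telescoping turns out to be messy, my fallback is to bypass Step 3 altogether. I would decompose $\widetilde{P}_{2n}$ into the $2n$ triangles over its sides, compute each triangle's transform in closed form (a divided difference of $\er^{\ir\rho\cos\beta}$ over the two vertices of that side), and then check that the sum over $k$ matches $G$.

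\textbf{Step 4: entirety and scaling.} Each summand of $G$ is a product of sinc functions, so it is entire in $\rho$. Finally, \eqref{eq:qn-scaled} follows immediately from $P_{2n}=R_n\widetilde{P}_{2n}$, the scaling law \eqref{eq:scaleFT} with $s=R_n$, and the choice $\ev{}^\star=\ev{\phi^\star}$.
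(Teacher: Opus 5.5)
Your argument is correct and takes a genuinely different route from the paper. However, Step 3 carries the whole content of the lemma once Steps 1--2 are in place, and you only predict it rather than check it. It does close as you anticipate.

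Put $f(\beta):=\sin\beta\,\sin(\rho\cos\beta)$. First note that $\rho\cos^2\alpha_k\operatorname{sinc}(\rho A_k)=\frac{\cos\alpha_k}{\cos\delta}\sin(\rho A_k)$ and $\rho\sin^2\alpha_k\operatorname{sinc}(\rho B_k)=\frac{\sin\alpha_k}{\sin\delta}\sin(\rho B_k)$. These hold also when $A_k$ or $B_k$ vanishes, so no case distinction is needed. Multiply the $k$-th summand of the difference of the two sides of your identity by $\rho$. By the product-to-sum formulas and $A_k\pm B_k=\cos(\alpha_k\mp\delta)$, it becomes
\[
\frac{\cos\alpha_k}{\cos\delta}\sin(\rho A_k)\cos(\rho B_k)-\frac{\sin\alpha_k}{\sin\delta}\cos(\rho A_k)\sin(\rho B_k)=\frac{f(\alpha_k+\delta)-f(\alpha_k-\delta)}{\sin 2\delta}.
\]
Since $\alpha_{k+1}+\delta=\alpha_k-\delta$, the sum over $0\le k\le n-1$ telescopes to $\bigl(f(\alpha_0+\delta)-f(\alpha_{n-1}-\delta)\bigr)/\sin2\delta$. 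Moreover $\alpha_{n-1}-\delta=\alpha_0+\delta-\pi$.

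So the cancellation is not purely between neighbouring sides among the $n$ sides you kept. The wrap-around uses $f(\beta-\pi)=f(\beta)$, i.e.\ the antipodal vertex, and this is where your pairing of opposite sides enters. With this filled in, Steps 1--4 form a complete proof, and the triangle-decomposition fallback is unnecessary.

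The paper never needs such an identity because it uses a different vector field. Since $\er^{\ir\xib\cdot\xb}=\operatorname{div}_{\xb}\bigl(-\ir\xib\,\er^{\ir\xib\cdot\xb}/|\xib|^{2}\bigr)$, the divergence theorem expresses $\fhat{\widetilde{P}_{2n}}(\xib)$ itself, rather than $(\rho^2F)'$, as $-\frac{\ir}{\rho^{2}}\sum_k(\xib\cdot\nj{k})\int_{E_k}\er^{\ir\xib\cdot\xb}\dd s$. Sides $k$ and $k+n$ have opposite normals, so pairing them combines the two exponentials into $\er^{\ir X}-\er^{-\ir X}=2\ir\sin X$, with $X=\rho\cos\delta\cos\alpha_k$. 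The prefactor $\xib\cdot\nj{k}/\rho^{2}=\cos\alpha_k/\rho$ then turns $\sin X$ into $\cos\delta\cos^{2}\alpha_k\operatorname{sinc}X$. The formula drops out in one line, with no integration in $\rho$.

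Your Euler field $\xb$ has the attraction that $\xb\cdot\nu\equiv\cos\delta$ on every side. But it only delivers the derivative of $\rho^2F$, and undoing that differentiation is exactly what costs you the telescoping identity. Both arguments are rigorous; the paper's is strictly shorter. Yours produces along the way the formula $(\rho^2F)'=4\cos\delta\sin\delta\,\rho\sum_k\cos(\rho A_k)\operatorname{sinc}(\rho B_k)$, which is not needed anywhere in the paper.
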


\begin{proof}
Write $\xib=\rho\ev\phi$, and let $\nj{k}:=\ev{k\pi/n}$, $k=0,\dots,2n-1$, be the outward unit normals of the sides $E_k$ of $\widetilde{P}_{2n}$. The side $E_k$ has unit normal $\nj k\in\mathcal{M}$ and midpoint $(\cos\delta)\,\nj k$, and is parametrised as $\xb=(\cos\delta)\,\nj k+t\,\nj k^{\perp}$ with $t\in[-\sin\delta,\sin\delta]$. Since $\er^{\ir\xib\cdot\xb}=\operatorname{div}\left(-\ir\xib\,\er^{\ir\xib\cdot\xb}/|\xib|^{2}\right)$, the divergence theorem gives
\begin{equation}\label{eq:chisum}
\begin{gathered}
\fhat{\widetilde{P}_{2n}}(\xib)=-\frac{\ir}{\rho^{2}}\sum_{k=0}^{2n-1}\left(\xib\cdot\nj k\right)\int_{E_k}\er^{\ir\xib\cdot\xb}\dd s,\\
\int_{E_k}\er^{\ir\xib\cdot\xb}\dd s
=2\sin\delta\,\er^{\ir\rho\cos\delta\cos\alpha_k}\operatorname{sinc}\left(\rho\sin\delta\sin\alpha_k\right),
\end{gathered}
\end{equation}
where we used $\xib\cdot\nj k=\rho\cos\alpha_k$ and $\xib\cdot\xb=\rho\left(\cos\delta\cos\alpha_k+t\sin\alpha_k\right)$ on $E_k$. Since $\nj{k+n}=-\nj{k}$, the terms $k$ and $k+n$ in the sum \eqref{eq:chisum} combine to
\[
4\cos\delta\sin\delta\,\cos^{2}\alpha_k\,\operatorname{sinc}\left(\rho\cos\delta\cos\alpha_k\right)\operatorname{sinc}\left(\rho\sin\delta\sin\alpha_k\right),
\]
which yields \eqref{eq:qformula}; and then \eqref{eq:qn-scaled} is \eqref{eq:scaleFT} with $s=R_n$.
\end{proof}

Thanks to \Cref{thm:regular_polygon_active_direction}, it suffices to compute $q_n$ along a minimising direction in order to compute $\kappa(P_{2n})$.
Thus, later on, we will apply the formula \eqref{eq:qformula} in the minimising direction $\ev{}^{\star}$.
The following Corollary is a direct consequence of \Cref{thm:regular_polygon_active_direction}:

\begin{corollary}\label{cor:endpoint}
For every $n\ge2$,
\begin{equation}\label{eq:firstroot}
\kappa(P_{2n})=\min\left\{\rho>0:\ q_n(\rho)=0\right\},
\end{equation}
and $q_n>0$ on $\left(0,\kappa(P_{2n})\right)$.
\end{corollary}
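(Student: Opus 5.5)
The plan is to read \Cref{cor:endpoint} off from the definitions of $\kappa$, $\kappa_1$ and minimising directions, using \Cref{thm:regular_polygon_active_direction} to know that $\ev{}^{\star}$ is a minimising direction. By \eqref{eq:qdef}, $q_n(\rho)=\fhat{P_{2n}}(\rho\,\ev{}^{\star})$. By \Cref{thm:regular_polygon_active_direction}, $\ev{}^{\star}$ lies in $\mathcal{M}$ when $n$ is even and in $\mathcal{V}$ when $n$ is odd. Either way it is a minimising direction for $P_{2n}$, so $\kappa(P_{2n})\ev{}^{\star}\in\N(P_{2n})$. In other words, $q_n(\kappa(P_{2n}))=0$. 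For $n=2$ this can also be checked directly from \Cref{ex:square}.

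Next I will show that no smaller positive $\rho$ is a zero. By the definition $\kappa(\Omega)=\operatorname{dist}(\N(\Omega),0)$, every $\xib$ with $0<|\xib|<\kappa(P_{2n})$ satisfies $\fhat{P_{2n}}(\xib)\neq0$. Taking $\xib=\rho\,\ev{}^{\star}$ gives $q_n(\rho)\ne0$ for $0<\rho<\kappa(P_{2n})$. Together with the first step, this shows that $\kappa(P_{2n})$ is the smallest positive zero of $q_n$. The set in \eqref{eq:firstroot} is nonempty and has a least element, so the minimum is attained; this is equivalently $\kappa_1(\ev{}^{\star})=\kappa(P_{2n})$.

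For the sign, I will use that $q_n$ is continuous on $[0,\kappa(P_{2n})]$. This follows because $\fhat{P_{2n}}$ is continuous, and it is also entire by \Cref{lem:qformula}. We have $q_n(0)=|P_{2n}|=\pi>0$, and $q_n$ has no zero on $(0,\kappa(P_{2n}))$. The intermediate value theorem then gives $q_n>0$ on that interval.

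There is no real obstacle here. The one point needing care is that a minimising direction means the zero is attained at distance exactly $\kappa$ along that ray, not merely approached. This holds by the definition of minimising direction together with the closedness of $\N(P_{2n})$, both recorded in \Cref{ssec:ideas}.
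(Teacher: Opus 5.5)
Your proposal is correct and is essentially the paper's proof. Both get $q_n(\kappa(P_{2n}))=0$ from \Cref{thm:regular_polygon_active_direction}: you note directly that $\ev{}^{\star}\in\mathcal{M}$ resp.\ $\mathcal{V}$, while the paper uses the symmetry equivalence of all such directions. Both get positivity from $\fhat{P_{2n}}(0)=\pi$ together with the absence of zeros in $\{|\xib|<\kappa(P_{2n})\}$: you apply the intermediate value theorem on the ray, while the paper uses connectedness of the disk. The only point you leave implicit is $0<\kappa(P_{2n})<\infty$; the paper gets this from \Cref{prop:existence} and continuity of $\fhat{P_{2n}}$ at the origin.
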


\begin{proof}
Set $\kappa:=\kappa(P_{2n})$, which is finite by \Cref{prop:existence} and positive since $\fhat{P_{2n}}$ is continuous with $\fhat{P_{2n}}(0)=\pi$. On the disk $\left\{|\xib|<\kappa\right\}$ the function $\fhat{P_{2n}}$ has no zeros and is positive at the origin, hence positive throughout; in particular $q_n>0$ on $(0,\kappa)$. Moreover, $\kappa\,\ev{}\in\N(P_{2n})$ for some minimising direction $\ev{}$, which by \Cref{thm:regular_polygon_active_direction} and \eqref{eq:scaleFT} is a side-midpoint ($n$ even) or a vertex ($n$ odd) direction. All such directions are equivalent under the symmetries of $P_{2n}$, and $\ev{}^{\star}$ is one of them; hence $q_n(\kappa)=0$.
\end{proof}

The following lemma gives a lower bound for the distance from the origin where zeros of $q_n$ can occur:

\begin{lemma}\label{lem:smallr}
For every $n\ge2$ and $\rho>0$,
\begin{equation}\label{eq:smallr}
q_n(\rho)\ \ge\ \pi\left(1-\frac{\rho^{2}R_n^{2}}{4}\right);
\end{equation}
in particular, $q_n>0$ on $\left(0,\,2/R_n\right)$.
\end{lemma}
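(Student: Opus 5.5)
The plan is to use the elementary inequality $\cos t\ge 1-t^2/2$, valid for all real $t$, directly in the defining integral of the Fourier transform. Since $P_{2n}$ is balanced, $q_n(\rho)=\fhat{P_{2n}}(\rho\ev{}^{\star})=\int_{P_{2n}}\cos(\rho\,\ev{}^{\star}\cdot\xb)\dd\xb$, and hence
\[
q_n(\rho)\ \ge\ |P_{2n}|-\frac{\rho^2}{2}\int_{P_{2n}}(\ev{}^{\star}\cdot\xb)^2\dd\xb .
\]
By the dihedral symmetry of $P_{2n}$ (invariance under rotation by $\pi/n$ with $n\ge2$), the second-moment matrix $\int_{P_{2n}}\xb\xb^{T}\dd\xb$ commutes with a rotation by an angle not a multiple of $\pi$, so it is a multiple of the identity. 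Therefore $\int_{P_{2n}}(\ev{}\cdot\xb)^2\dd\xb=\frac12\int_{P_{2n}}|\xb|^2\dd\xb$ for every unit vector $\ev{}$, in particular for $\ev{}^{\star}$.

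Next, bound the polar moment using $|\xb|\le R_n$ on $P_{2n}$:
\[
\int_{P_{2n}}|\xb|^2\dd\xb\ \le\ R_n^2\,|P_{2n}|=\pi R_n^2 .
\]
This already gives $q_n(\rho)\ge \pi-\frac{\rho^2}{4}\pi R_n^2=\pi\left(1-\frac{\rho^2R_n^2}{4}\right)$, which is exactly \eqref{eq:smallr}. One could sharpen the estimate with the exact moment $\frac12\int_0^{2\pi}\mathcal{R}(\theta)^4\dd\theta\cdot\frac12$, but the crude bound matches the stated constant, so no refinement is needed. Alternatively, one can scale to $\widetilde P_{2n}$ using \eqref{eq:qn-scaled} and $|\xb|\le 1$ there; the two routes are equivalent.

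For the final claim, note that for $0<\rho<2/R_n$ the right-hand side of \eqref{eq:smallr} is strictly positive, so $q_n>0$ on that interval. The only step needing any care is the isotropy of the second moment; if one prefers to avoid the symmetry argument, one can instead use the cruder bound $(\ev{}^{\star}\cdot\xb)^2\le|\xb|^2\le R_n^2$. That gives only $\pi(1-\rho^2R_n^2/2)$, so the symmetry step is genuinely needed to obtain the factor $1/4$.
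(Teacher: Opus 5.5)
Your proposal is correct and follows essentially the same route as the paper. Both arguments use the isotropy of the second-moment matrix under rotation by $2\delta=\pi/n$, then the bound $|\xb|\le R_n$ together with $|P_{2n}|=\pi$, and finally $\cos y\ge 1-y^2/2$.
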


\begin{proof}
The symmetric matrix $\mathsf{M}:=\int_{P_{2n}}\xb\,\xb^{\mathsf{T}}\dd \xb$ satisfies
$\mathsf{M}=\mathfrak{R}\mathsf{M}\mathfrak{R}^{\mathsf{T}}$, where $\mathfrak{R}$ is the
rotation by the angle $2\delta$, since $\mathfrak{R}P_{2n}=P_{2n}$ and
$\mathfrak{R}\mathfrak{R}^{\mathsf{T}}=\mathsf{I}$. Hence $\mathsf{M}$ commutes with $\mathfrak{R}$; as $\mathsf{M}$ is
symmetric and $2\delta\notin\pi\mathbb{Z}$ for $n\ge2$, its eigenvalues must coincide, so
$\mathsf{M}=\lambda\mathsf{I}$. Taking traces, $2\lambda=\int_{P_{2n}}|\xb|^{2}\dd \xb$, whence
\[
\int_{P_{2n}}\left(\ev{}^{\star}\cdot\xb\right)^{2}\dd \xb
=\frac12\int_{P_{2n}}|\xb|^{2}\dd \xb
\le\frac{\pi R_n^{2}}{2},
\]
using $|\xb|\le R_n$ on $P_{2n}$ and $\left|P_{2n}\right|=\pi$. Now \eqref{eq:smallr}
follows from $\cos y\ge1-y^{2}/2$.
\end{proof}

\subsection{Certified enclosures for \texorpdfstring{$2\le n\le 21$}{2<=n<=21}}\label{ssec:finite}

Recall $\kappa(\D)=\jone$. 
For $2 \leq n \leq 21$ we enclose $\kappa(P_{2n})$ as follows:
since
$R_n\le\sqrt{\pi/2}<4/3$, \Cref{lem:smallr} gives
$q_n>0$ on $\left(0,\frac32\right]$. 
We then choose $\frac{3}{2} < l_n < u_n$ so that $q_n(l_n) > 0 > q_n(u_n)$ and certify $q_n>0$ on $\left[\frac{3}{2}, l_n \right]$ using verified numerics.
\Cref{cor:endpoint} implies $\kappa(P_{2n}) \in [l_n, u_n]$.  The resulting enclosures are collected in \Cref{tab:enclosures}, with $u_n=l_n+10^{-12}$ in every case.

\begin{table}[htb]
\centering
\renewcommand{\arraystretch}{1.1}
\begin{tabular}{c|l|c|l}
\toprule
$n$ & $l_n$ & $n$ & $l_n$\\
\midrule
$2$ & $3.544907701811$ & $12$ & $3.831706071593$ \\
$3$ & $3.809251227455$ & $13$ & $3.831706032783$ \\
$4$ & $3.830839192487$ & $14$ & $3.831706010248$ \\
$5$ & $3.831700570935$ & $15$ & $3.831705996637$ \\
$6$ & $3.831712243963$ & $16$ & $3.831705988130$ \\
$7$ & $3.831708614742$ & $17$ & $3.831705982652$ \\
$8$ & $3.831707147744$ & $18$ & $3.831705979032$ \\
$9$ & $3.831706546813$ & $19$ & $3.831705976582$ \\
$10$ & $3.831706275027$ & $20$ & $3.831705974891$ \\
$11$ & $3.831706141601$ & $21$ & $3.831705973700$ \\
\bottomrule
\end{tabular}

\caption{Certified enclosures $\kappa(P_{2n})\in\left[l_n,\,l_n+10^{-12}\right]$.  For comparison, $\jone=3.831705970207\ldots$}
\label{tab:enclosures}
\end{table}

Together with \Cref{lem:certified}(i)--(ii) we get

\begin{proposition}\label{prop:finite-kappa-comparison}
The following inequalities hold:
\begin{equation}\label{eq:finite-chain}
\kappa(P_{12})>\kappa(P_{14})>\kappa(P_{16})>\kappa(P_{18})>\kappa(\D)>\kappa(P_{10})>\kappa(P_{8})>\kappa(P_{6})>\kappa(P_{4}).
\end{equation}
\end{proposition}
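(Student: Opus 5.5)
The plan is to read the chain \eqref{eq:finite-chain} directly off the certified enclosures of \Cref{tab:enclosures}, together with a certified enclosure of $\jone$ (which I take to be the content of \Cref{lem:certified}(i)--(ii)). By \Cref{cor:endpoint}, for each $2\le n\le 9$ we have $\kappa(P_{2n})\in[l_n,u_n]$ with $u_n=l_n+10^{-12}$, because $q_n>0$ on $(0,l_n]$ and $q_n(u_n)<0$. For this step to hold we need two certified facts for each $n$. The first is positivity of $q_n$ on $[\tfrac32,l_n]$; for this I would use a branch-and-bound over subintervals, evaluating the finite sum \eqref{eq:qformula} at the rescaled argument $R_n\rho$ in interval arithmetic, with $\operatorname{sinc}$ enclosed by Taylor models near $0$. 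The second is $q_n(u_n)<0$, which is a single interval evaluation. On $(0,\tfrac32]$ positivity comes from \Cref{lem:smallr}, since $R_n<4/3$.

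Given these enclosures, each strict inequality $\kappa(P_{2n})>\kappa(P_{2m})$ reduces to checking $l_n>u_m=l_m+10^{-12}$. From the table:
\begin{itemize}
\item $l_6=3.831712243963>l_7+10^{-12}$;
\item $l_7>l_8+10^{-12}$;
\item $l_8>l_9+10^{-12}$;
\item $l_5+10^{-12}<l_4$, $l_4+10^{-12}<l_3$, and $l_3+10^{-12}<l_2$, read in the reversed direction for the lower part of the chain.
\end{itemize}
The gaps between consecutive entries are of order $10^{-7}$ or larger, so the width $10^{-12}$ is harmless.

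For the comparisons with the disk, I would use a certified enclosure $\jone\in[3.831705970207,\,3.831705970208]$. This can be obtained by an interval evaluation of $J_1$ at both endpoints showing a sign change, together with monotonicity of $J_1$ near its zero to exclude any earlier zero. Then:
\begin{itemize}
\item $\kappa(P_{18})\ge l_9=3.831706546813>\jone$;
\item $\kappa(P_{10})\le u_5=3.831700570936<\jone$.
\end{itemize}
Since $\kappa(\D)=\jone$, this completes the chain.

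The main obstacle is not the comparisons, which are trivial once enclosures exist. It is certifying that $q_n$ has no zero on $[\tfrac32,l_n]$, so that $l_n$ really lower-bounds the \emph{first} zero rather than a later one. Near $l_n$, the function $q_n$ is small: its derivative is of order $1$ but it must be resolved to about $10^{-12}$. Subdivision must therefore be adaptive, with interval widths shrinking towards $l_n$. I would first bound $q_n'$ on the whole interval by interval evaluation, and then use a mean-value (monotonicity) argument on the last short segment instead of subdividing it finely. A further care point is the dependence of the sinc arguments on $R_n$ and $\delta$, which must themselves be enclosed rigorously via \eqref{eq:areapi-geom}.
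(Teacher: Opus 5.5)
Your proposal follows the paper's route. The chain is read off the enclosures of \Cref{tab:enclosures}, which rest on \Cref{cor:endpoint}, on \Cref{lem:smallr} for $(0,\frac32]$, on certified positivity of $q_n$ on $[\frac32,l_n]$, and on the sign of $q_n(u_n)$. These are combined with the enclosure of $\jone$ from \Cref{lem:certified}(i).

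Two small corrections are needed.

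First, the three inequalities you list for the lower part of the chain are written backwards, and as stated they are false; for example, $l_5=3.8317005\ldots>l_4=3.8308\ldots$. What $\kappa(P_{10})>\kappa(P_8)>\kappa(P_6)>\kappa(P_4)$ actually requires is $l_4+10^{-12}<l_5$, $l_3+10^{-12}<l_4$ and $l_2+10^{-12}<l_3$. These do hold, with margins of at least $8\cdot10^{-4}$.

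Second, a sign change of $J_1$ at the endpoints of your bracket, plus monotonicity near the zero, only shows that \emph{some} zero of $J_1$ lies in that bracket. To know it is the first zero $\jone=\kappa(\D)$, you must also certify $J_1>0$ on $(0,3.831705970207]$. The paper does this by bisection; alternatively, simply quote \Cref{lem:certified}(i).
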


\begin{proposition}\label{prop:bridge}
For every integer $9\le n\le 20$,
\begin{equation}\label{eq:bridge}
\left|\,\kappa(P_{2n})-\jone-\frac{C_0}{n^{6}}\,\right|\ \le\ \frac{4}{n^{8}},
\qquad C_0=\frac{\jone^{3}\pi^{6}}{181440}.
\end{equation}
\end{proposition}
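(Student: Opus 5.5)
The statement concerns only the twelve integers $9\le n\le 20$, and for each of them \Cref{tab:enclosures} already provides a certified enclosure $\kappa(P_{2n})\in[l_n,l_n+10^{-12}]$. So the proof is a finite check: for each $n$ we compare this enclosure with the explicit target interval
\[
\left[\jone+\frac{C_0}{n^6}-\frac{4}{n^8},\ \jone+\frac{C_0}{n^6}+\frac{4}{n^8}\right].
\]
It suffices to verify, for each such $n$,
\[
l_n-\jone-\frac{C_0}{n^6}\ \ge\ -\frac{4}{n^8}
\qquad\text{and}\qquad
l_n+10^{-12}-\jone-\frac{C_0}{n^6}\ \le\ \frac{4}{n^8}.
\]
Both are inequalities between explicit real numbers. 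We evaluate them in interval arithmetic, using a rigorous enclosure of $\jone$ (available, e.g., from certified Bessel-zero routines) and of $C_0=\jone^3\pi^6/181440$.

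The steps are as follows. First, compute $C_0$ with rigorous bounds; numerically $C_0\approx 0.2981$. Second, for each $n$, form the interval enclosing $\kappa(P_{2n})-\jone-C_0n^{-6}$, using the enclosure of $\kappa(P_{2n})$ from \Cref{tab:enclosures}. Third, check that this interval lies inside $[-4n^{-8},4n^{-8}]$.

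The data are consistent with this. For $n=10$ the table gives $\kappa-\jone\approx3.048\times10^{-7}$, while $C_0n^{-6}\approx2.981\times10^{-7}$; the difference $\approx6.7\times10^{-9}$ is well within $4\cdot10^{-8}$. For $n=20$ the allowed error is $4/20^8\approx1.56\times10^{-10}$. This is still much larger than the enclosure width $10^{-12}$ and the uncertainty in $\jone$, so the check should pass with margin.

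The main obstacle is not the verification itself but the reliability of its inputs. The enclosures in \Cref{tab:enclosures} rest on \Cref{cor:endpoint} (hence on \Cref{thm:regular_polygon_active_direction}) and on the certified positivity of $q_n$ on $[3/2,l_n]$ via \eqref{eq:qformula}. That positivity certification is done by branch and bound on a finite sum of sinc products, which is routine but must be carried out carefully for $n$ up to $21$.

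The second concern is the margin near $n=20$. The constant $4$ in \eqref{eq:bridge} must actually dominate the true $O(n^{-8})$ coefficient there. If the margin turned out to be too thin, I would tighten the enclosures with $u_n-l_n$ much smaller than $10^{-12}$, which is cheap, rather than change the statement. One should also double-check the origin of $C_0$: it is the cubic term in the angular-mean expansion mentioned at the start of \Cref{section:regular-polygons-in-the-plane}. Its exact value matters only through the numerics here, not through any analytic derivation.
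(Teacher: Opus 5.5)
Your proposal is correct and takes essentially the paper's approach. For each of the twelve values of $n$, you compare the certified enclosure $[l_n,l_n+10^{-12}]$ from \Cref{tab:enclosures} with a rigorous enclosure of $\jone+C_0n^{-6}\pm4n^{-8}$, obtained from the certified bounds on $\jone$ and $C_0$ in \Cref{lem:certified}(i)--(ii), and you check by interval arithmetic that the first interval lies inside the second (the tightest case, $n=20$, has margin about $1.3\cdot10^{-10}$, as you anticipated).
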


\begin{proof}
Each of the twelve inequalities \eqref{eq:bridge} is a comparison between the enclosure of $\kappa(P_{2n})$ given by \Cref{tab:enclosures} and the enclosure of $\jone+C_0n^{-6}\pm4n^{-8}$ obtained from \Cref{lem:certified}(i)--(ii); all twelve are verified by the routine of \Cref{ssec:certified}.
\end{proof}

\subsection{Effective asymptotics of \texorpdfstring{$\kappa(P_{2n})$}{kappa(P2n)}}\label{ssec:effective}

Recall that $\fhat{\D}\left(\rho\,\ev{}\right)=D(\rho):=2\pi J_1(\rho)/\rho$ for every unit vector $\ev{}$. Set
\begin{equation}\label{eq:d0-def}
D_0:=D'(\jone)=\frac{2\pi J_0(\jone)}{\jone}<0,
\qquad
B_0:=-\frac{\jone^{2}J_0(\jone)\,\pi^{7}}{90720}>0,
\end{equation}
where $J_1'(\jone)=J_0(\jone)<0$; note that the constant $C_0$ satisfies
\begin{equation}\label{eq:BdC}
C_0=\frac{\jone^{3}\pi^{6}}{181440}=-\frac{B_0}{D_0}\,,
\qquad\text{i.e.}\qquad
B_0+D_0C_0=0.
\end{equation}
As will become clear below, $B_0n^{-6}$ is the leading value of $q_n$ at $\jone$ and $D_0$ is its leading slope, so \eqref{eq:BdC} makes $\jone+C_0n^{-6}$ the natural candidate for the first zero of $q_n$.

\begin{theorem}\label{thm:effective}
For every integer $n\ge 9$,
\begin{equation}\label{eq:effective}
\left|\,\kappa(P_{2n})-\jone-\frac{C_0}{n^{6}}\,\right|\;\le\;\frac{4}{n^{8}},
\qquad C_0=\frac{\jone^{3}\pi^{6}}{181440}.
\end{equation}
\end{theorem}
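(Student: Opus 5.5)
The idea is to treat $q_n$ as a small perturbation of the disk profile $D(\rho)=2\pi J_1(\rho)/\rho$. By \Cref{cor:endpoint}, $\kappa(P_{2n})$ is the first positive zero of $q_n$. For $9\le n\le 20$ the claim is already \Cref{prop:bridge}, so it remains to treat $n\ge 21$.

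The first step is an explicit expansion of $q_n$ near $\jone$. By \Cref{prop:fourier} and \eqref{eq:qn-scaled},
\[
q_n(\rho)=R_n^2\Bigl(\widetilde a_0(R_n\rho)+2\sum_{m\ge1}(-1)^{nm}\widetilde b_m(R_n\rho)\cos(2nm\phi^\star)\Bigr).
\]
Let $a_0(\rho):=R_n^2\widetilde a_0(R_n\rho)$, which is the angular mean $\int_{P_{2n}}J_0(\rho|\xb|)\dd\xb$. The harmonic terms are bounded using \Cref{lem:poisson}, as in the proof of \Cref{lem:upper}: each is $O\bigl((\rho R_n/2)^{2n}/(2n)!\bigr)$. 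Near $\rho\approx 3.84$ and for $n\ge21$ this is astronomically smaller than $n^{-10}$, so the whole tail is negligible.

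For $a_0$, expand $J_0(\rho r)=\sum_k (-1)^k(\rho r/2)^{2k}/(k!)^2$ and integrate termwise. This uses the moments $\mu_{2k}=\frac{1}{2k+2}\int_0^{2\pi}\mathcal R(\theta)^{2k+2}\dd\theta$, where $\mathcal R(\theta)=A_n/\cos\theta$ on $[-\delta,\delta]$. Compare these with the disk moments $2\pi/(2k+2)$. Writing $\mathcal R^{2k+2}=A_n^{2k+2}\sec^{2k+2}\theta$, one expands $\frac{1}{2\delta}\int_{-\delta}^{\delta}\sec^{p}\theta\dd\theta$ and $A_n=\sqrt{\delta/\tan\delta}$ in powers of $\delta^2$. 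The area normalisation $\mu_0=\pi$ kills the constant correction, and one checks that the $\delta^2$ and $\delta^4$ corrections to $\mu_{2k}/\mu_{2k}^{\D}$ vanish identically in $k$, i.e.\ the two cancellations mentioned in the text. The $\delta^6$ coefficient is a cubic polynomial in $k$. Resumming $\sum_k(-1)^k(\rho/2)^{2k}k^j/(k!)^2(k+1)$ expresses it through $J_0,J_1$, which gives $a_0(\rho)=D(\rho)+n^{-6}B(\rho)+E_n(\rho)$ with $B(\jone)=B_0$. This should be checked against \eqref{eq:d0-def}. The remainder should satisfy an explicit bound $|E_n(\rho)|\le K n^{-8}$ on $\rho\in[\jone-0.01,\jone+0.01]$, together with a bound on $|B'|$ there. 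These explicit constants would come from \Cref{lem:certified,lem:elemseries} (certified numerics for the $\delta$-series coefficients and the tail in $k$).

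The second step locates the zero. Set $\rho_\pm=\jone+C_0n^{-6}\pm4n^{-8}$. Taylor-expand $D$ at $\jone$ as $D(\rho)=D_0(\rho-\jone)+\tfrac12 D''(\xi)(\rho-\jone)^2$, and use \eqref{eq:BdC}. This gives
\[
q_n(\rho_\pm)=\pm4D_0n^{-8}+O(n^{-12})+n^{-6}\bigl(B(\rho_\pm)-B_0\bigr)+E_n+\text{tail}.
\]
Since $|D_0|=2\pi|J_0(\jone)|/\jone\approx0.66$, we have $4|D_0|\approx2.64$. It then suffices to bound $|E_n|n^{8}+|B'|C_0n^{-4}+\dots<2.64$ for $n\ge21$; the generous constant $4$ suggests this margin is available. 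This yields $q_n(\rho_-)>0>q_n(\rho_+)$.

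Finally, to see that the root in $[\rho_-,\rho_+]$ is the first zero, one needs $q_n>0$ on $(0,\rho_-]$. Near $\jone$ this follows from $q_n=D+O(n^{-6})$ and $D>0$ on $(0,\jone)$, with $D$ bounded below away from $\jone$. Close to $\jone$, the derivative estimate $q_n'\approx D_0<0$ gives monotonicity. Small $\rho$ is covered by \Cref{lem:smallr}. The main obstacle is the first step: proving the exact vanishing of the $\delta^2,\delta^4$ terms and getting a rigorous, uniform-in-$k$ explicit remainder at order $\delta^8$, since the moment series must be controlled simultaneously in $k$ and in $\delta$.
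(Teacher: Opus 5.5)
Your overall architecture matches the paper's: the angular mean $a_0$ plus a negligible mode tail, an area-normalisation cancellation leaving a cubic correction worth $B_0n^{-6}$ at $\jone$, then trapping the zero near $\jone+C_0n^{-6}$ with a slope bound and checking positivity to the left. The gap is that the cancellation your first step rests on is false as stated. Take $p=2k+2$ and the exact identity $\mathcal{I}_1=-\tfrac12\mathcal{I}_2$ that comes from the area normalisation. Then
\[
\frac{\mu_{2k}}{\mu_{2k}^{\D}}=\frac{1}{2\pi}\int_0^{2\pi}\left(1+h_n(\theta)\right)^{p}\dd\theta
=1+\frac{p(p-2)}{2}\cdot\frac{\mathcal{I}_2}{2\pi}+\binom{p}{3}\frac{\mathcal{I}_3}{2\pi}+\cdots
=1+\frac{2k(k+1)}{45}\,\delta^{4}+O(\delta^{6}).
\]
So only the $\delta^2$ correction vanishes identically; the $\delta^4$ one vanishes only for $k=0$. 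Resumming it with $\sum_{k\ge0}(-1)^k k\,(\rho/2)^{2k}/(k!)^2=-\tfrac{\rho}{2}J_1(\rho)$ gives
\[
a_0(\rho)-D(\rho)=-\frac{\pi^{5}}{720\,n^{4}}\,\rho J_1(\rho)+O(n^{-6}).
\]
This vanishes at $\rho=\jone$ only because $J_1(\jone)=0$. The ``two cancellations'' are therefore a pointwise fact at $\jone$, not an identity in $k$ or in $\rho$. In the paper's terms: $g_{\jone}(1)=0$ and $g_{\jone}'(1)=g_{\jone}''(1)=J_0(\jone)$, so the first two Taylor orders combine into $J_0(\jone)\int_0^{2\pi}(h_n+h_n^2/2)\dd\theta=0$. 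Consequently your expansion $a_0=D+n^{-6}B+E_n$ with $|E_n|\le Kn^{-8}$ uniformly on $[\jone-0.01,\jone+0.01]$ cannot hold. The leftover term is about $0.66\,|\rho-\jone|\,n^{-4}$, roughly $0.0066\,n^{-4}$ at the endpoints of that interval. Likewise $q_n=D+O(n^{-6})$ is false away from $\jone$, where the true size is $n^{-4}$. For your positivity step, though, a crude bound such as $|q_n-D|\le\Delta_n=O(n^{-2})$ already suffices.

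The argument is repairable. At $\rho_\pm$ the extra term is only about $0.66\,C_0\,n^{-10}$, so your margin $4|D_0|\approx2.64$ survives once you restrict the expansion to $|\rho-\jone|=O(n^{-4})$ and carry this term explicitly. The paper is organised along these lines:
\begin{itemize}
\item it evaluates only $q_n(\jone)$, where the quadratic term really does vanish, by expanding $g_{\jone}(s)=sJ_1(\jone s)/\jone$ at $s=1$ pointwise in $h_n(\theta)$, with Lagrange remainder bounded by $\tfrac{13}{24}\mathcal{I}_4$;
\item it treats the slope separately via $|q_n'(\jone)-D_0|\le2n^{-4}$; that $n^{-4}$ is exactly your non-vanishing term, harmless because it multiplies $C_0n^{-6}$;
\item it closes with $|q_n''|\le\pi R_n^2/2$ and the uniform bound $q_n'\le-0.53$ near $\jone$.
\end{itemize}
Expanding pointwise in $h_n$ rather than through the moments $\mu_{2k}$ also removes the uniform-in-$k$ bookkeeping at order $\delta^8$ that you identify as the main obstacle. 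Only $\mathcal{I}_2,\mathcal{I}_3,\mathcal{I}_4$ and certified bounds on $g_{\jone}''''$ and $\gamma_{\jone}''$ over $[0.99,1.01]$ are needed.
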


The range $9\le n\le 20$ is covered by \Cref{prop:bridge}; 
for $n\ge21$ the proof occupies the rest of this subsection. It uses the certified enclosures of a few Bessel quantities collected in \Cref{lem:certified}, and the Bessel facts of \Cref{lem:gbessel,lem:J2mono}.

Write $h_n(\theta):=\mathcal{R}(\theta)-1$. Since $P_{2n}$ and $\D$ both have area $\pi$,
\begin{equation}\label{eq:areaid}
\int_0^{2\pi}\left(\mathcal{R}(\theta)\right)^{2}\dd\theta=2\pi
\qquad\Longleftrightarrow\qquad
\int_0^{2\pi}\left(h_n(\theta)+\frac{\left(h_n(\theta)\right)^{2}}{2}\right)\dd\theta=0 ;
\end{equation}
this exact identity will cancel the first- and second-order terms in \Cref{prop:qvalue} below and is the source of the smallness of $\kappa(P_{2n})-\jone$.

\begin{lemma}\label{lem:profile}
For $n\ge20$ (so that $\delta\le\frac2{25}$) and $|\theta|\le\delta$,
\begin{equation}\label{eq:profile}
\left|\,h_n(\theta)-\left(\frac{\theta^{2}}{2}-\frac{\delta^{2}}{6}\right)\right|\ \le\ \frac{\delta^{4}}{9}\,.
\end{equation}
\end{lemma}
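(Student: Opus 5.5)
Since $h_n(\theta)=\mathcal{R}(\theta)-1=A_n/\cos\theta-1$ for $|\theta|\le\delta$, with $A_n=\sqrt{\delta/\tan\delta}$ by \eqref{eq:areapi-geom}, the plan is to Taylor-expand both factors in the small quantities $\delta$ and $\theta$ (with $|\theta|\le\delta\le 2/25$) and control the remainders explicitly. Write
\[
h_n(\theta)=(A_n-1)+A_n\left(\frac{1}{\cos\theta}-1\right).
\]
The main term $\theta^2/2-\delta^2/6$ comes from $A_n-1\approx-\delta^2/6$ and $1/\cos\theta-1\approx\theta^2/2$. The error should be $O(\delta^4)$ with a constant well below $1/9$.

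For the first piece, use $\tan\delta/\delta=1+\delta^2/3+2\delta^4/15+\cdots$, which has positive coefficients. This gives $\delta/\tan\delta\in[1-\delta^2/3-c_1\delta^4,\,1-\delta^2/3]$ for an explicit small $c_1$; one can take $c_1=1/45$, since $\delta/\tan\delta=1-\delta^2/3-\delta^4/45-\cdots$ with all later terms negative. A bound of the form $c_1\le 0.03$, valid for $\delta\le 2/25$, is enough. Next apply $\sqrt{1-x}\in[1-x/2-x^2/2,\,1-x/2]$ for $0\le x\le 1/2$. Then $A_n-1=-\delta^2/6+E_1$ with $|E_1|\le c\,\delta^4$, where $c$ is about $1/72+1/90<0.03$.

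For the second piece, use $1/\cos\theta-1=\theta^2/2+E_2$ with $0\le E_2\le \tfrac{5}{24}\theta^4(1+\epsilon)$. The series of $\sec$ has positive coefficients, so for $|\theta|\le 2/25$ the tail beyond the $\theta^4$ term is tiny. Hence $E_2\le 0.21\,\delta^4$. Multiplying by $A_n\in(1-\delta^2/6-\ldots,\,1)$, using $A_n<1$ from \eqref{eq:areapi-geom2}, gives
\[
A_n\left(\frac{1}{\cos\theta}-1\right)=\frac{\theta^2}{2}+E_3,
\qquad
|E_3|\le E_2+(1-A_n)\frac{\theta^2}{2}\le 0.21\,\delta^4+\frac{\delta^2}{6}\cdot\frac{\delta^2}{2}(1+\epsilon).
\]

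The main obstacle is keeping the constants under $1/9\approx0.111$, since the worst case $\theta=\pm\delta$ already costs about $5/24\approx0.208$ from the secant term alone. A crude triangle inequality will therefore fail, and the signs must be tracked. The $\delta^4$ contributions are: from $A_n-1$, about $-\delta^4/72-\delta^4/90$, a negative term of size about $0.025$; from $\sec$, $+\tfrac{5}{24}\theta^4$; and from the product, $-\delta^2\theta^2/12$. The total is the polynomial $\tfrac{5}{24}t^2-\tfrac{1}{12}t-0.025$ in $t=\theta^2/\delta^2\in[0,1]$. Its values range over roughly $[-0.033,\,0.1]$, so its absolute value stays below $0.1$ on $[0,1]$. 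That leaves a margin of about $0.01\,\delta^4$ for the genuinely higher-order terms, which are $O(\delta^6)$ with $\delta^2\le 0.0064$.

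So I would write $h_n(\theta)-(\theta^2/2-\delta^2/6)=\delta^4 P(\theta^2/\delta^2)+E$. Here $P$ is the explicit quadratic above, and $E$ collects all remainders, bounded by $C\delta^6$ with a crude explicit $C$ obtained from the positive-coefficient series tails and $\delta\le2/25$. The proof then closes by checking $\max_{[0,1]}|P|+C\delta^2\le 1/9$.
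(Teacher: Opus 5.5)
Your plan is the paper's proof. Your quadratic $P(t)=\frac{5}{24}t^{2}-\frac{1}{12}t-\frac{1}{40}$ is exactly the paper's $G(\theta)/\delta^{4}$, with maximum modulus $\frac1{10}$ at $\theta=\pm\delta$; the paper bounds everything else by $\delta^{6}/6$, closes with $\frac{\delta^{4}}{10}+\frac{\delta^{6}}{6}\le\frac{\delta^{4}}{9}$, and its grouping $h_n=(A_n-1)\sec\theta+(\sec\theta-1)$ instead of yours is immaterial. One caution: the intervals you wrote for $\delta\cot\delta$ and $\sqrt{1-x}$ are too lossy for the signed step. With $c_1=1/45$ the lower bound is false, since the omitted terms are negative, and $\sqrt{1-x}\le1-\frac{x}{2}$ gives at best $A_n\le1-\frac{\delta^{2}}{6}-\frac{\delta^{4}}{90}$, which at $\theta=\pm\delta$ only yields $\left(\frac18-\frac1{90}\right)\delta^{4}>\frac{\delta^{4}}{9}$. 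So you need the two-sided bound $1-\frac{\delta^{2}}{6}-\frac{\delta^{4}}{40}-\frac{\delta^{6}}{125}\le A_n\le1-\frac{\delta^{2}}{6}-\frac{\delta^{4}}{40}$. The paper gets it from $1-\frac y2-\frac{y^2}8-\frac{y^3}8\le\sqrt{1-y}\le1-\frac y2-\frac{y^2}8$; it also follows from your tail trick, since $1-A_n$ is itself a series in $\delta^{2}$ with nonnegative coefficients.
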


\begin{proof}
Since $h_n(\theta)=A_n\sec\theta-1=(A_n-1)\sec\theta+(\sec\theta-1)$, setting
\[
T_1:=\left(A_n-1+\frac{\delta^{2}}{6}+\frac{\delta^{4}}{40}\right)\sec\theta,
\qquad
T_2:=\sec\theta-1-\frac{\theta^{2}}{2}\,,
\qquad
G(\theta):=\frac{5\theta^{4}}{24}-\frac{\delta^{2}\theta^{2}}{12}-\frac{\delta^{4}}{40}\,,
\]
we have
\[
h_n(\theta)-\left(\frac{\theta^{2}}{2}-\frac{\delta^{2}}{6}\right)
=T_1-\frac{\delta^{4}}{40}\sec\theta
+T_2-\frac{\delta^{2}}{6}\left(\sec\theta-1\right)
=G(\theta)+\Theta(\theta),
\]
where, after regrouping,
\[
\Theta(\theta)=T_1+\left(T_2-\frac{5\theta^{4}}{24}\right)
-\frac{\delta^{2}}{6}\,T_2-\frac{\delta^{4}}{40}\left(\sec\theta-1\right).
\]
By \Cref{lem:elemseries}(iii), $|T_1|\le\frac{\delta^{6}}{125}\sec\delta\le\frac{\delta^{6}}{124}$; by \Cref{lem:elemseries}(i), $\left|T_2-\frac{5\theta^{4}}{24}\right|\le\frac{\theta^{6}}{11}$ and $0\le T_2\le\frac{\theta^{4}}{4}$, so that $\frac{\delta^{2}}{6}T_2\le\frac{\delta^{6}}{24}$ and $\frac{\delta^{4}}{40}\left(\sec\theta-1\right)\le\frac{\delta^{6}}{79}$. Hence
\[
\left|\Theta(\theta)\right|\ \le\
\delta^{6}\left(\frac1{124}+\frac1{11}+\frac1{24}+\frac1{79}\right)\ \le\ \frac{\delta^{6}}{6}\,.
\]
Finally, an elementary computation gives $\max_{|\theta|\le\delta}|G|=G(\delta)=\frac{\delta^{4}}{10}$, whence \eqref{eq:profile} follows from $\frac{\delta^{4}}{10}+\frac{\delta^{6}}{6}\le\frac{\delta^{4}}{9}$ for $\delta\le\frac2{25}$.
\end{proof}

For $k\in\mathbb{N}$ put $\mathcal{I}_k:=\int_0^{2\pi}\left(h_n(\theta)\right)^{k}\dd\theta$. Substituting $\theta=u/n$ and using the $(2\delta)$-periodicity,
\begin{equation}\label{eq:rescaled}
\mathcal{I}_k=2\int_{-\pi/2}^{\pi/2}\left(h_n\left(\frac un\right)\right)^{k}\dd u,
\qquad
h_n\left(\frac un\right)=\frac{G_0(u)}{n^{2}}+\frac{Z_n(u)}{n^{4}},
\qquad
G_0(u):=\frac{u^{2}}{2}-\frac{\pi^{2}}{24},
\end{equation}
where $\left\|Z_n\right\|_{\infty}\le\frac19\left(\frac\pi2\right)^{4}<0.677$ for $n\ge20$ by \Cref{lem:profile}. One computes in closed form $\int_{-\pi/2}^{\pi/2}G_0(u)\dd u=0$ and $\int_{-\pi/2}^{\pi/2} \left|G_0(u)\right|\dd u=\frac{\pi^{3}}{18\sqrt3}<1$, and
\begin{equation}\label{eq:G0moments}
\begin{gathered}
\int_{-\pi/2}^{\pi/2}\left(G_0(u)\right)^{2}\dd u=\frac{\pi^{5}}{720},\qquad
\int_{-\pi/2}^{\pi/2}\left(G_0(u)\right)^{3}\dd u=\frac{\pi^{7}}{30240},\\
\int_{-\pi/2}^{\pi/2}\left(G_0(u)\right)^{4}\dd u=\frac{\pi^{9}}{241920}.
\end{gathered}
\end{equation}

\begin{lemma}\label{lem:moments}
For every $n\ge21$,
\begin{equation}\label{eq:I3est}
\mathcal{I}_2\ \le\ \frac{1}{n^{4}},
\qquad
\left|\,\mathcal{I}_3-\frac{\pi^{7}}{15120\,n^{6}}\right|\ \le\ \frac{7}{4\,n^{8}},
\qquad
\mathcal{I}_4\ \le\ \frac{3}{10\,n^{8}}\,,
\qquad \left\|h_n\right\|_\infty\le\frac1{500}\,.
\end{equation}
\end{lemma}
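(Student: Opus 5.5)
All four bounds follow from the decomposition \eqref{eq:rescaled}, $h_n(u/n)=G_0(u)/n^2+Z_n(u)/n^4$ with $\|Z_n\|_\infty\le 0.677$ (from \Cref{lem:profile}). We expand the powers binomially and estimate each remainder term using the closed-form moments \eqref{eq:G0moments}, the bound $\int|G_0|<1$, and $\|G_0\|_\infty=\max(\pi^2/12,\pi^2/24)=\pi^2/12<0.823$ on $[-\pi/2,\pi/2]$.

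First, the sup bound. On $|\theta|\le\delta$, \Cref{lem:profile} gives $|h_n(\theta)|\le \max(\delta^2/3,\delta^2/6)+\delta^4/9$. With $\delta=\pi/(2n)\le\pi/42$ for $n\ge21$, this is about $0.00187<1/500$. Periodicity then extends the bound to all $\theta$.

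For $\mathcal I_2$, we have $\mathcal I_2=2\int(G_0/n^2+Z_n/n^4)^2$. The leading term is $2\pi^5/(720n^4)\approx0.85/n^4$. The cross term is bounded by $4\cdot0.677\int|G_0|/n^6$, and the square of the remainder by $2\pi(0.677)^2/n^8$. For $n\ge21$ these two are negligible, so the total stays below $1/n^4$.

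For $\mathcal I_4$, the crude bound $|h_n(u/n)|\le (0.823+0.677/n^2)/n^2$ is too lossy, since $0.83^4\pi\cdot 2\approx3$. Instead we write $h^4\le h^2\|h\|^2$ and use the exact moment. Expanding $(G_0+Z_n/n^2)^4$, the main term is $2\pi^9/(241920n^8)\approx0.246/n^8$. The corrections are $O(n^{-10})$: each is bounded by $4\|G_0\|^3\cdot0.677\,\pi$ over $n^{10}$ and similar terms, and at $n=21$ the total correction is at most about $0.02/n^8$. Hence $\mathcal I_4\le 0.3/n^8$.

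For $\mathcal I_3$, the main term is $2\pi^7/(30240n^6)=\pi^7/(15120n^6)$. The error is
\[
2\int\bigl(3G_0^2Z_n n^{-8}+3G_0Z_n^2n^{-10}+Z_n^3n^{-12}\bigr).
\]
The first piece is bounded by $6\cdot0.677\cdot\pi^5/720\approx1.73$ over $n^8$. This is the tight step: the constant $7/4$ leaves little margin. The remaining two pieces contribute $\lesssim 6\cdot0.677^2\cdot1/n^{10}+2\pi(0.677)^3/n^{12}$, i.e. about $0.006/n^8$ at $n=21$. The total, roughly $1.736/n^8$, is below $7/4$. If that margin fails, I would sharpen the bound on $Z_n$ pointwise, using $|Z_n(u)|\le u$-dependent bounds from the proof of \Cref{lem:profile}. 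Alternatively, I would certify the constants numerically via \Cref{lem:elemseries}.
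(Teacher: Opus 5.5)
Your proposal is correct and is essentially the paper's proof. The $\mathcal I_3$ estimate is the same cube expansion with the same constants ($6\cdot0.677\cdot\pi^5/720\approx1.73$ plus terms of order $n^{-10}$), and the sup bound is the same $n^{-2}(\pi^2/12+0.677\,n^{-2})$.

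For $\mathcal I_2$ and $\mathcal I_4$ the paper uses the triangle inequality in $L^2$ and $L^4$, giving about $0.86/n^4$ and $0.25/n^8$. Your binomial expansion works equally well and gives about $0.86/n^4$ and $0.27/n^8$. In $\mathcal I_4$ the leading correction is $8\|G_0\|_\infty^3\cdot0.677\,\pi/n^{10}$, because the factor $2$ in $\mathcal I_k=2\int$ must be kept. You should delete the aside ``$h^4\le h^2\|h\|^2$'': on its own it only yields about $0.58/n^8$, so it is not what carries the argument.
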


\begin{proof}
By the triangle inequality in $L^2\left(-\frac\pi2,\frac\pi2\right)$ and \eqref{eq:G0moments},
\[
\mathcal{I}_2=\frac{2}{n^{4}}\left\|G_0+\frac{Z_n}{n^{2}}\right\|_{L^2}^{2}
\le\frac{2}{n^{4}}\left(\sqrt{\frac{\pi^{5}}{720}}+\frac{0.677\sqrt\pi}{441}\right)^{2}
\le\frac{1}{n^{4}}\,.
\]
Expanding the cube in \eqref{eq:rescaled} and using \eqref{eq:G0moments}, $\left\|Z_n\right\|_\infty\le0.677$ and $\int_{-\pi/2}^{\pi/2} \left|G_0(u)\right|\dd u\le1$,
\[
\left|\,\mathcal{I}_3-\frac{\pi^{7}}{15120\,n^{6}}\right|
\le\frac{2}{n^{8}}\left(3\cdot0.677\cdot\frac{\pi^{5}}{720}
+\frac{3\cdot0.677^{2}}{n^{2}}
+\frac{\pi\cdot0.677^{3}}{n^{4}}\right)
\le\frac{7}{4n^{8}}
\]
by direct evaluation. Similarly, by the triangle inequality in $L^4\left(-\frac\pi2,\frac\pi2\right)$,
\[
\mathcal{I}_4
\le\frac{2}{n^{8}}\left(\left(\frac{\pi^{9}}{241920}\right)^{1/4}+\frac{0.677\,\pi^{1/4}}{441}\right)^{4}
\le\frac{3}{10\,n^{8}}\,,
\]
and $\left\|h_n\right\|_\infty\le n^{-2}\left(\frac{\pi^{2}}{12}+0.677\,n^{-2}\right)<\frac1{500}$.
\end{proof}

\begin{lemma}\label{lem:meantail}
For all $\rho>0$ and $\phi\in\R$,
\begin{equation}\label{eq:Adecomp}
\fhat{P_{2n}}\left(\rho\,\ev\phi\right)
=a_0(\rho)+2\sum_{m=1}^{\infty}(-1)^{nm}b_m(\rho)\cos\left(2nm\phi\right),
\end{equation}
where
\begin{equation}\label{eq:Andef}
a_0(\rho):=\int_{P_{2n}}J_0\left(\rho|\xb|\right)\dd \xb
=\frac{1}{\rho}\int_0^{2\pi}\mathcal{R}(\theta)\,J_1\left(\rho\,\mathcal{R}(\theta)\right)\dd\theta,
\qquad
\left|b_m(\rho)\right|\le\pi\,\frac{\left(\rho R_n/2\right)^{2nm}}{(2nm)!}\,.
\end{equation}
Consequently, in the minimising direction $\ev{}^{\star}$,
\begin{equation}\label{eq:tailbound}
q_n(\rho)=a_0(\rho)+\tau_n(\rho),
\qquad
\left|\tau_n(\rho)\right|\le 4\pi\,\frac{\left(\rho R_n/2\right)^{2n}}{(2n)!}
\qquad\text{whenever }\rho R_n\le4.
\end{equation}
\end{lemma}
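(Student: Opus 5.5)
The decomposition \eqref{eq:Adecomp} is \Cref{prop:fourier} transported by scaling. Since $P_{2n}=R_n\widetilde{P}_{2n}$, \eqref{eq:scaleFT} gives $\fhat{P_{2n}}(\rho\ev\phi)=R_n^2\,\widetilde{\Phi}_{R_n\rho}(\phi)$. Alternatively, and more transparently, I would repeat the Jacobi--Anger argument of the proof of \Cref{prop:fourier} directly on $P_{2n}$. I would expand $\cos(\rho r\cos(\theta-\phi))$ and integrate term by term, which is legitimate because the series is dominated on $P_{2n}$ by $2\sum_k(\rho R_n/2)^{2k}/(2k)!$, using \Cref{lem:poisson} and $|\xb|\le R_n$. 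The dihedral symmetry of $P_{2n}$ kills all $k\notin n\mathbb{N}$. This yields \eqref{eq:Adecomp} with
\[
a_0(\rho)=\int_{P_{2n}}J_0(\rho|\xb|)\dd\xb,\qquad b_m(\rho)=\int_{P_{2n}}J_{2nm}(\rho|\xb|)\cos(2nm\theta)\dd\xb.
\]

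The closed form of $a_0$ follows from polar coordinates and the identity $\frac{\dr}{\dr r}\left(rJ_1(\rho r)\right)=\rho rJ_0(\rho r)$, which gives $\int_0^{\mathcal{R}(\theta)}J_0(\rho r)\,r\dd r=\mathcal{R}(\theta)J_1(\rho\mathcal{R}(\theta))/\rho$. For the coefficient bound, \Cref{lem:poisson} gives $|J_{2nm}(\rho|\xb|)|\le(\rho|\xb|/2)^{2nm}/(2nm)!\le(\rho R_n/2)^{2nm}/(2nm)!$ on $P_{2n}$. Integrating over a set of area $\pi$ yields $|b_m(\rho)|\le\pi(\rho R_n/2)^{2nm}/(2nm)!$.

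For \eqref{eq:tailbound}, I would set $\tau_n:=2\sum_{m\ge1}(-1)^{nm}b_m(\rho)\cos(2nm\phi^\star)$, so that $|\tau_n|\le2\pi\sum_{m\ge1}x^{2nm}/(2nm)!$ with $x:=\rho R_n/2\le2$. Factoring out the $m=1$ term, the ratio of the $m$-th term to it is
\[
\frac{x^{2n(m-1)}(2n)!}{(2nm)!}\le\left(\frac{x}{2n+1}\right)^{2n(m-1)}\le\left(\frac25\right)^{4(m-1)},
\]
using $n\ge2$. The tail is therefore at most $\bigl(1-(2/5)^4\bigr)^{-1}<2$ times the leading term, which gives the constant $4\pi$.

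The only mild subtlety is the bookkeeping of the sum in the last step. The geometric comparison there needs $x/(2n+1)<1$ with some room, and this is exactly where the hypothesis $\rho R_n\le4$ enters. Everything else is a direct transcription of the argument already used for \Cref{prop:fourier}.
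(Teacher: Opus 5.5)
Your proposal is correct and follows essentially the same route as the paper: scaling \Cref{prop:fourier} (or, equivalently, redoing the Jacobi--Anger expansion directly on $P_{2n}$), then the identity $(yJ_1(y))'=yJ_0(y)$ for $a_0$, and \Cref{lem:poisson} with $|\xb|\le R_n$ and $|P_{2n}|=\pi$ for $b_m$. The tail is summed geometrically in both; the only cosmetic difference is that you compare each term to the $m=1$ term, getting ratio at most $(2/5)^{4(m-1)}$, whereas the paper bounds consecutive ratios by $\frac12$.
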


\begin{proof}
By \eqref{eq:scaleFT}, $\fhat{P_{2n}}\left(\rho\,\ev\phi\right)=R_n^{2}\,\fhat{\widetilde{P}_{2n}}\left(R_n\rho\,\ev\phi\right)$, so \Cref{prop:fourier}, applied to $\widetilde{P}_{2n}$, gives \eqref{eq:Adecomp} with $a_0(\rho)=R_n^{2}\,\widetilde{a}_0(R_n\rho)$ and $b_m(\rho)=R_n^{2}\,\widetilde{b}_m(R_n\rho)=\int_{P_{2n}}J_{2nm}(\rho|\xb|)\cos(2nm\theta)\dd \xb$, where the last equality follows by substituting $\xb\mapsto R_n^{-1}\xb$; the bound on $b_m$ then follows from \Cref{lem:poisson} with $|\xb|\le R_n$ and $\left|P_{2n}\right|=\pi$, and the radial formula for $a_0$ from $\left(yJ_1(y)\right)'=yJ_0(y)$. In the minimising direction, $\cos(2nm\phi)$ equals $1$ for all $m$ ($n$ even) or $(-1)^{m}$ ($n$ odd, since $2nm\delta=m\pi$), so $|\tau_n|\le2\sum_{m\ge1}|b_m|$; for $\rho R_n\le4$, the ratio of the bounds in \eqref{eq:Andef}
for consecutive $m$ is at most
\[
4^n\frac{(2nm)!}{(2n(m+1))!}
\le 4^n\frac{(2n)!}{(4n)!}
\le\left(\frac{4}{(2n+1)^2}\right)^n
\le\frac12,
\]
so the sum is at most twice its first term.
\end{proof}

\begin{proposition}\label{prop:qvalue}
For every $n\ge21$,
\begin{equation}\label{eq:qvalue}
\left|\,q_n(\jone)-\frac{B_0}{n^{6}}\right|\ \le\ \frac{19}{10\,n^{8}}\,.
\end{equation}
\end{proposition}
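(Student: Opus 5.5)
We split $q_n(\jone)=a_0(\jone)+\tau_n(\jone)$ as in \Cref{lem:meantail}. Since $R_n\le\sqrt{\pi/2}$, we have $\jone R_n<4$, so \eqref{eq:tailbound} gives $|\tau_n(\jone)|\le4\pi(\jone R_n/2)^{2n}/(2n)!$. For $n\ge21$ this is astronomically smaller than $n^{-8}$: the quantity $(2.41)^{42}/42!$ is below $10^{-34}$, and the ratio of consecutive terms in $n$ is tiny. It therefore costs only a negligible fraction of the allowance $\frac{19}{10}n^{-8}$. The real work is to expand the angular mean $a_0(\jone)$ around the disk value $D(\jone)=0$.

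Write $g(y):=yJ_1(\jone y)$. By \eqref{eq:Andef}, $a_0(\jone)=\jone^{-1}\int_0^{2\pi}g(1+h_n(\theta))\dd\theta$. Taylor-expand $g$ about $y=1$ to third order with a fourth-order Lagrange remainder:
\[
g(1+h)=g(1)+g'(1)h+\tfrac12g''(1)h^2+\tfrac16g'''(1)h^3+\tfrac1{24}g^{(4)}(\eta)h^4 .
\]
Here $g(1)=J_1(\jone)=0$, and by $(yJ_1(\jone y))'=\jone yJ_0(\jone y)$ we get $g'(1)=\jone J_0(\jone)$. Differentiating this identity once more gives $g''(1)=\jone J_0(\jone)-\jone^2J_1(\jone)=\jone J_0(\jone)=g'(1)$. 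This is exactly what the area identity \eqref{eq:areaid} needs: the first- and second-order terms combine into $g'(1)\int_0^{2\pi}(h_n+\frac12h_n^2)\dd\theta=0$. Hence
\[
a_0(\jone)=\frac{g'''(1)}{6\jone}\,\mathcal{I}_3+\frac{1}{24\jone}\int_0^{2\pi} g^{(4)}(\eta(\theta))h_n^4\dd\theta .
\]
Using Bessel's equation at $\jone$, one computes $g'''(1)=-2\jone^2 J_0(\jone)$. Then $\frac{g'''(1)}{6\jone}\cdot\frac{\pi^7}{15120}=-\frac{\jone J_0(\jone)\pi^7}{45360}$. This should match $B_0=-\jone^2J_0(\jone)\pi^7/90720$ after accounting for the factor $\jone/2$; the constant $g'''(1)$ has to be rechecked carefully so that it equals $-\jone^3J_0(\jone)$. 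The target is that the leading term is exactly $B_0n^{-6}$.

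The error terms are then handled with \Cref{lem:moments}. The cubic term contributes $\frac{|g'''(1)|}{6\jone}\cdot\frac{7}{4}n^{-8}$. The quartic term is bounded by $\frac{\sup|g^{(4)}|}{24\jone}\mathcal{I}_4\le\frac{\sup|g^{(4)}|}{24\jone}\cdot\frac{3}{10}n^{-8}$. The supremum is taken over $|y-1|\le\|h_n\|_\infty\le1/500$, and we bound it crudely from $|J_\nu|\le1$ and the recurrences, or via the certified values of \Cref{lem:certified} and \Cref{lem:gbessel}. Adding these two contributions to the negligible tail must give at most $\frac{19}{10}n^{-8}$.

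\emph{Main obstacle.} The hard step is making the numerical constants fit under $19/10$. The bound on $|J_0(\jone)|\approx0.403$ has to be sharp, and the fourth derivative has to be estimated reasonably tightly. If the crude bound on $g^{(4)}$ turns out too large, I would use certified enclosures of $J_0,J_1$ near $\jone$ from \Cref{lem:certified} instead of $|J_\nu|\le1$.
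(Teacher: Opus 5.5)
Your argument is the paper's own: split $q_n=a_0+\tau_n$, Taylor-expand $s\mapsto sJ_1(\jone s)$ about $s=1$ to third order with a fourth-order Lagrange remainder, let \eqref{eq:areaid} cancel the first two orders (because $g''(1)=g'(1)$), read off $B_0n^{-6}$ from $\mathcal I_3$, and control the rest with \Cref{lem:moments} and certified constants. The structure is right, but three details as written are wrong and need repair.

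First, $g'''(1)=-\jone^3J_0(\jone)$, not $-2\jone^2J_0(\jone)$. Differentiating $g''(y)=\jone J_0(\jone y)-\jone^2yJ_1(\jone y)$ gives $g'''(1)=-2\jone^2J_1(\jone)-\jone^3J_1'(\jone)=-\jone^3J_0(\jone)$. Hence $\frac{g'''(1)}{6\jone}\cdot\frac{\pi^7}{15120}=B_0$ exactly, and the cubic error coefficient is $\jone^2|J_0(\jone)|/6<0.9856$.

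Second, $R_n\le\sqrt{\pi/2}$ does not give $\jone R_n<4$, since $\jone\sqrt{\pi/2}\approx4.80$. Your base $2.41$ reflects exactly this, and it lies outside the hypothesis $\rho R_n\le4$ of \eqref{eq:tailbound}. Use instead that $R_n$ decreases in $n$, so $\jone R_n\le\jone R_{21}<3.84$.

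Third, the crude route to the quartic remainder fails. The cubic term already consumes $0.9856\cdot\frac74\approx1.725$ of the $\frac{19}{10}$ budget, so you need $\sup|g^{(4)}|/\jone$ at most about $14$ on the relevant interval. The bound $|J_\nu|\le1$ only gives about $2\jone^2+\jone+\jone^3\approx89$. Your fallback is the right fix and is what the paper does. The certified bound $|g_{\jone}''''|\le13$ on $[0.99,1.01]$ from \Cref{lem:certified}(v) (in your normalisation $g_{\jone}=g/\jone$) contributes $\frac{13}{24}\cdot\frac{3}{10}<0.163$. The total is then below $1.89$, and the tail $\tau_n(\jone)$ is far below the remaining $0.01\,n^{-8}$.
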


\begin{proof}
Let $g=g_{\jone}$ be as in \Cref{lem:gbessel}, so that $a_0(\jone)=\int_0^{2\pi}g\left(\mathcal{R}(\theta)\right)\dd\theta$ by \eqref{eq:Andef}. By \Cref{lem:moments}, $\mathcal{R}(\theta)$ takes values in $[0.99,1.01]$, so Taylor's formula with Lagrange remainder, \Cref{lem:gbessel} and \Cref{lem:certified}(v) give, pointwise in $\theta$,
\[
\begin{split}
g\left(1+h_n(\theta)\right)&=J_0(\jone)\left(h_n(\theta)+\frac{\left(h_n(\theta)\right)^{2}}{2}\right)
-\frac{\jone^{2}J_0(\jone)}{6}\,\left(h_n(\theta)\right)^{3}+\mathcal{E}(\theta),
\\
\left|\mathcal{E}(\theta)\right|&\le\frac{13}{24}\,\left(h_n(\theta)\right)^{4}.
\end{split}
\]
Integrating in $\theta$ and recalling that $\mathcal{I}_k=\int_0^{2\pi}\left(h_n(\theta)\right)^{k}\dd\theta$,
\[
a_0(\jone)=J_0(\jone)\int_0^{2\pi}\left(h_n(\theta)+\frac{\left(h_n(\theta)\right)^{2}}{2}\right)\dd\theta
-\frac{\jone^{2}J_0(\jone)}{6}\,\mathcal{I}_3+\int_0^{2\pi}\mathcal{E}(\theta)\dd\theta,
\]
where the first integral vanishes \emph{exactly} by \eqref{eq:areaid}. Since
$-\frac{\jone^{2}J_0(\jone)}{6}\cdot\frac{\pi^{7}}{15120}=B_0$ and
$\left|\int_0^{2\pi}\mathcal{E}(\theta)\dd\theta\right|\le\frac{13}{24}\,\mathcal{I}_4$, this gives
\[
\left|\,a_0(\jone)-\frac{B_0}{n^{6}}\right|
\ \le\ \frac{\jone^{2}\left|J_0(\jone)\right|}{6}\left|\,\mathcal{I}_3-\frac{\pi^{7}}{15120\,n^{6}}\right|
+\frac{13}{24}\,\mathcal{I}_4,
\]
and the second and third estimates of \eqref{eq:I3est}, together with
$\frac{\jone^{2}\left|J_0(\jone)\right|}{6}<0.9856$ from \Cref{lem:certified}(i), yield
\[
\left|\,a_0(\jone)-\frac{B_0}{n^{6}}\right|
\ \le\ 0.9856\cdot\frac{7}{4n^{8}}+\frac{13}{24}\cdot\frac{3}{10\,n^{8}}
\ \le\ \frac{1.89}{n^{8}}\,.
\]
Finally, $\jone R_n\le\jone R_{21}<3.84$, so \eqref{eq:tailbound} gives $|\tau_n(\jone)|\le4\pi\,(1.92)^{2n}/(2n)!\le10^{-20}n^{-8}$ for $n\ge21$ (the sequence $n^{8}\cdot4\pi\,(1.92)^{2n}/(2n)!$ decreases in $n$), and \eqref{eq:qvalue} follows.
\end{proof}

\begin{lemma}\label{lem:slope}
Let $\Delta_n:=\left|P_{2n}\triangle\D\right|$ denote the area of the symmetric difference of $P_{2n}$ and $\D$. Then $\Delta_n\le\pi\,\delta\tan\delta$, and for all $\rho>0$,
\begin{enumerate}
\item[\textup{(i)}] $\left|q_n'(\rho)-D'(\rho)\right|\ \le\ \rho\,R_n^{2}\,\Delta_n$, where $D'(\rho)=-2\pi J_2(\rho)/\rho$;
\item[\textup{(ii)}] $\left|q_n''(\rho)\right|\ \le\ \pi R_n^{2}/2$;
\item[\textup{(iii)}] if $n\ge21$, then $q_n'\le-\dfrac{53}{100}$ on $\left[\jone-\tfrac1{10},\,\jone+\tfrac1{10}\right]$, and $q_n>0$ on $\left(0,\jone\right]$;
\item[\textup{(iv)}] if $n\ge21$, then $\left|q_n'(\jone)-D_0\right|\le 2\,n^{-4}$.\end{enumerate}
\end{lemma}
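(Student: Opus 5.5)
We prove everything from the integral representation $q_n(\rho)=\int_{P_{2n}}\cos(\rho\,\ev{}^{\star}\cdot\xb)\dd\xb$ together with $D(\rho)=\int_{\D}\cos(\rho\,\ev{}^{\star}\cdot\xb)\dd\xb$.

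\emph{Bound on $\Delta_n$.} Since both sets have area $\pi$, $|P_{2n}\setminus\D|=|\D\setminus P_{2n}|$, so
\[
\Delta_n=2|P_{2n}\setminus\D|=\int_0^{2\pi}\bigl(\mathcal{R}^2-1\bigr)_+\dd\theta\le\int_0^{2\pi}\bigl(\mathcal{R}^2-A_n^2\bigr)\dd\theta .
\]
Each set difference is covered by the annulus $\{A_n\le|\xb|\le R_n\}$, where $A_n\le\mathcal{R}\le R_n$ and $A_n<1<R_n$. Using $R_n^2-A_n^2=\delta\tan\delta$ from \eqref{eq:areapi-geom2}, the right-hand side is at most $\int_0^{2\pi}(R_n^2-A_n^2)\dd\theta$. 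Some care is needed to get the constant $\pi$ rather than $2\pi$, and we would aim for it as follows. Write $|P_{2n}\setminus\D|\le\frac12\int(\mathcal{R}^2-1)_+\dd\theta\le\frac12\cdot 2\pi(R_n^2-1)$ and $|\D\setminus P_{2n}|\le\pi(1-A_n^2)$. Adding these gives $\Delta_n\le\pi(R_n^2-A_n^2)=\pi\delta\tan\delta$.

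\emph{(i) and (ii).} Differentiate under the integral:
\[
q_n'(\rho)-D'(\rho)=-\int\bigl(\chi_{P_{2n}}-\chi_{\D}\bigr)(\xb)\,(\ev{}^{\star}\cdot\xb)\sin(\rho\,\ev{}^{\star}\cdot\xb)\dd\xb .
\]
Since $|t\sin(\rho t)|\le\rho t^2\le\rho R_n^2$ on $P_{2n}\cup\D$ (recall $R_n>1$), (i) follows. The formula $D'=-2\pi J_2/\rho$ comes from $(J_1/\rho)'=-J_2/\rho$. For (ii), $|q_n''|\le\int_{P_{2n}}(\ev{}^{\star}\cdot\xb)^2\dd\xb$, and this equals $\frac12\int_{P_{2n}}|\xb|^2\dd\xb\le\pi R_n^2/2$, exactly as in the proof of \Cref{lem:smallr}.

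\emph{(iii) and (iv).} For $n\ge21$ we have $\delta\le\pi/42$, so $\Delta_n\le\pi\delta\tan\delta$ is tiny, and $R_n^2=2\delta/\sin2\delta\le1.004$. By (i), on $[\jone-0.1,\jone+0.1]$,
\[
q_n'\le -\frac{2\pi J_2(\rho)}{\rho}+(\jone+0.1)R_n^2\Delta_n .
\]
To bound $J_2(\rho)/\rho$ from below on this interval, use \Cref{lem:J2mono} or the certified values of \Cref{lem:certified}. Near $\jone$ we have $J_2(\jone)=-J_0(\jone)\approx0.4028$, so $2\pi J_2/\rho\approx0.66$. The margin down to $0.53$ should absorb both the variation over the interval and the error term $\approx0.005$. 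The endpoint check of $J_2$ at $\jone\pm0.1$ is the step needing certified numerics, and I expect it to be the main obstacle, though it is a finite check.

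For the positivity $q_n>0$ on $(0,\jone]$, split the interval:
\begin{itemize}
\item On $(0,2/R_n)$, use \Cref{lem:smallr}.
\item On the middle range $[2/R_n,\jone-0.1]$, compare with the disk: $|q_n-D|\le\Delta_n$ trivially. Since $D\ge D(\jone-0.1)\approx0.1\cdot|D_0|\approx0.066$ there (by monotonicity of $J_1(\rho)/\rho$ up to $\jone$), this exceeds $\Delta_n\le0.006$.
\item On $[\jone-0.1,\jone]$, $q_n$ is decreasing by the slope bound. Its value at $\jone$ is $\ge B_0n^{-6}-1.9n^{-8}>0$ by \Cref{prop:qvalue}, since $B_0\approx0.98\cdot\pi^7/15120\cdot6\ldots$ is an explicit positive constant far exceeding $1.9/441$.
\end{itemize}

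Finally, (iv) is immediate from (i) at $\rho=\jone$: the bound is $\jone R_n^2\pi\delta\tan\delta\lesssim\jone\cdot1.004\cdot\pi\cdot1.01\,\pi^2/(4n^2)\approx30/n^2$, which is not $2n^{-4}$. So (i) is too crude for (iv), and we will instead run the moment method of \Cref{prop:qvalue} on the derivative. Write
\[
q_n'(\rho)=a_0'(\rho)+\tau_n'(\rho),\qquad a_0'(\rho)=\int_0^{2\pi}\partial_\rho g_\rho(\mathcal{R}(\theta))\dd\theta .
\]
Expand $\partial_\rho g_\rho(1+h)$ to second order in $h$. The first-order term vanishes by combining it with the area identity \eqref{eq:areaid}, up to an $O(\mathcal{I}_2)$ correction. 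With $\mathcal{I}_2\le n^{-4}$ from \Cref{lem:moments} and explicit bounds on the second derivative of the integrand, this gives $|a_0'(\jone)-D_0|\le2n^{-4}$. The tail $\tau_n'$ is handled by differentiating the bound of \Cref{lem:meantail} via Cauchy estimates or a termwise derivative of the $b_m$, and is negligible.
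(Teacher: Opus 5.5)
Most of your proposal follows the paper's proof. The common steps are:
\begin{enumerate}
\item the annulus bound for $\Delta_n$ (your final two-term estimate is exactly $\pi(R_n^2-A_n^2)$);
\item differentiation under the integral for (i)--(ii);
\item the perturbation $q_n'\le -2\pi J_2(\rho)/\rho+(\jone+\tfrac1{10})R_n^2\Delta_n$ for the slope;
\item for (iv), a Taylor expansion of $\gamma_{\jone}=\partial_\rho g_\rho|_{\rho=\jone}$ about $s=1$ with $\mathcal{I}_2\le n^{-4}$, plus a termwise bound on $\tau_n'$.
\end{enumerate}

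Two corrections are needed.

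\emph{Your numbers are off.} At $n=21$, $\pi\delta\tan\delta\approx0.0176$; apparently you dropped the $\pi$. So the error term in the slope bound is about $0.07$, not $0.005$. The margin is then tight: $2\pi J_2(\rho)/\rho$ falls to about $0.608$ at $\jone+\tfrac1{10}$. The argument closes only as $\tfrac35-\tfrac7{100}=\tfrac{53}{100}$, using the certified bound of \Cref{lem:certified}(iii) on the whole interval. The alternative you mention, a single application of \Cref{lem:J2mono} across the interval, gives only $J_2(\rho)/\rho\ge(\jone-\tfrac1{10})J_2(\jone+\tfrac1{10})/(\jone+\tfrac1{10})^2$, about $0.577$ after multiplying by $2\pi$. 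That is not enough.

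\emph{The first-order term in (iv).} Its coefficient is $\gamma_{\jone}'(1)=-J_1(\jone)=0$ exactly, so the area identity plays no role there. Your version is still correct, since the extra term $\tfrac12|\gamma_{\jone}'(1)|\,\mathcal{I}_2$ you would carry is zero. This exact vanishing, together with $\sup_{[0.99,1.01]}|\gamma_{\jone}''|\le2$, is what produces the constant $2$.

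The one genuine difference is the positivity of $q_n$ on $(0,\jone]$.
\begin{itemize}
\item The paper shows $q_n$ is strictly decreasing on all of $(0,\jone]$ in one stroke. From (i), $-q_n'(\rho)\ge\rho\,(2\pi J_2(\rho)/\rho^2-R_n^2\Delta_n)$. Since $y^{-2}J_2(y)$ decreases (\Cref{lem:J2mono}), $2\pi J_2(\rho)/\rho^2\ge2\pi J_2(\jone)/\jone^2\ge\tfrac{17}{100}>\tfrac2{100}>R_n^2\Delta_n$. Then $q_n(\jone)>0$ finishes.
\item Your three-piece split also works once the numbers are corrected. The pieces are \Cref{lem:smallr} near $0$, the zeroth-order comparison $|q_n-D|\le\Delta_n$ in the middle, and the slope bound near $\jone$. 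For the middle piece, $D$ decreases on $(0,\jone]$, and by \Cref{lem:certified}(iii),
\[
D(\jone-\tfrac1{10})=\int_{\jone-1/10}^{\jone}2\pi J_2(\rho)\rho^{-1}\dd\rho\ge\tfrac{6}{100},
\]
which beats $\Delta_n<0.018$.
\end{itemize}
Your route replaces \Cref{lem:J2mono} and \Cref{lem:certified}(iv) with a second use of \Cref{lem:certified}(iii). It yields positivity but not global monotonicity on $(0,\jone]$; positivity is all the proof of \Cref{thm:effective} uses.
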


\begin{proof}
Since $A_n<1<R_n$, every point of $P_{2n}\triangle\D$ lies in the annulus bounded by the circles of radii $A_n$ and $R_n$; hence $\Delta_n\le\pi\left(R_n^{2}-A_n^{2}\right)=\pi\,\delta\tan\delta$ by \eqref{eq:areapi-geom2}.

(i) Differentiating under the integral sign and using $|\sin y|\le|y|$ and $\left|\ev{}^{\star}\cdot\xb\right|\le R_n$ on $P_{2n}\cup\D$,
\[
\left|q_n'(\rho)-D'(\rho)\right|
=\left|\int_{P_{2n}\triangle\D}\pm\left(\ev{}^{\star}\cdot\xb\right)\sin\left(\rho\,\ev{}^{\star}\cdot\xb\right)\dd \xb\right|
\le\rho\,R_n^{2}\,\Delta_n,
\]
while $D'(\rho)=-2\pi J_2(\rho)/\rho$ by \cite[\S10.6]{DLMF}. 

(ii) Likewise $|q_n''(\rho)|\le\int_{P_{2n}}\left(\ev{}^{\star}\cdot\xb\right)^{2}\dd \xb\le\pi R_n^{2}/2$, as in the proof of \Cref{lem:smallr}.

(iii) Since $R_n$ and $\delta$ decrease in $n$, a direct evaluation gives, for $n\ge21$ and $\rho\le\jone+\frac1{10}$,
\[
\rho R_n^{2}\Delta_n\ \le\ \left(\jone+\tfrac1{10}\right)R_{21}^{2}\,\pi\,\delta_{21}\tan\delta_{21}\ <\ \frac{7}{100}\,,
\]
which together with (i) and \Cref{lem:certified}(iii) yields $q_n'\le-\frac35+\frac7{100}=-\frac{53}{100}$ on the stated interval. For the positivity, $y^{-2}J_2(y)$ is strictly decreasing on $\left(0,\jone\right]$ by \Cref{lem:J2mono}, so \Cref{lem:certified}(iv) and $R_n^{2}\Delta_n\le R_{21}^{2}\,\pi\delta_{21}\tan\delta_{21}<\frac2{100}$ give, for $0<\rho\le\jone$,
\[
-q_n'(\rho)\ \ge\ \rho\left(\frac{2\pi J_2(\rho)}{\rho^{2}}-R_n^{2}\Delta_n\right)
\ \ge\ \rho\left(\frac{17}{100}-\frac{2}{100}\right)\ >\ 0.
\]
Thus $q_n$ is strictly decreasing on $\left(0,\jone\right]$, while $q_n(\jone)\ge B_0n^{-6}-\frac{19}{10}n^{-8}>0$ by \Cref{prop:qvalue} and \Cref{lem:certified}(i); hence $q_n>0$ on $\left(0,\jone\right]$.

(iv) Since $\int_0^{X}J_0(\rho r)\,r\dd r=XJ_1(\rho X)/\rho$ and $\frac{\dd}{\dd z}\left(z^{2}J_2(z)\right)=z^{2}J_1(z)$ \cite[\S10.6]{DLMF}, differentiating \eqref{eq:Andef} gives
\[
a_0'(\rho)=-\frac1\rho\int_0^{2\pi}\left(\mathcal{R}(\theta)\right)^{2}J_2\left(\rho\,\mathcal{R}(\theta)\right)\dd\theta,
\qquad\text{so}\qquad
a_0'(\jone)=\int_0^{2\pi}\gamma\left(\mathcal{R}(\theta)\right)\dd\theta,
\]
where $\gamma:=\gamma_{\jone}$ is the function of \Cref{lem:gbessel}. By that lemma, $\gamma(1)=J_0(\jone)/\jone$, so that $\int_0^{2\pi}\gamma(1)\dd\theta=D_0$, and $\gamma'(1)=0$, so the first-order term
\emph{cancels}. Since $\mathcal{R}(\theta)$ takes values in
$[0.99,1.01]$ by \Cref{lem:moments}, Taylor's formula with
Lagrange remainder and \Cref{lem:certified}(vi) give
\[
\left|a_0'(\jone)-D_0\right|\ \le\ \frac12\sup_{[0.99,1.01]}\left|\gamma''\right|\cdot\mathcal{I}_2\ \le\ \frac{1}{n^{4}},
\]
using \eqref{eq:I3est}. Finally, differentiating \eqref{eq:Adecomp} term by term and using $\left|J_\nu'\right|\le(z/2)^{\nu-1}/(\nu-1)!$ --- which follows from $J_\nu'=\frac12\left(J_{\nu-1}-J_{\nu+1}\right)$ \cite[\S10.6]{DLMF}, \Cref{lem:poisson}, and $(z/2)^{2}\le\nu(\nu+1)$ --- together with $\jone R_n\le4$, the mode tail satisfies $\left|\tau_n'(\jone)\right|\le4\pi R_n\left(\jone R_n/2\right)^{2n-1}/(2n-1)!\le n^{-4}$ for $n\ge21$.
Since $q_n=a_0+\tau_n$ by \eqref{eq:tailbound}, the two preceding
estimates give
$\left|q_n'(\jone)-D_0\right|
\le
\left|a_0'(\jone)-D_0\right|+\left|\tau_n'(\jone)\right|
\le 2n^{-4}$.
\end{proof}

\begin{proof}[Proof of \Cref{thm:effective}]
For $9\le n\le20$ this is \Cref{prop:bridge}, so let $n\ge21$ and set $y_n:=\jone+C_0n^{-6}$. By Taylor's formula around $\jone$ and \Cref{lem:slope}(ii), for some $|\zeta|\le1$,
\[
q_n\left(y_n\right)=q_n(\jone)+q_n'(\jone)\,\frac{C_0}{n^{6}}+\zeta\,\frac{\pi R_n^{2}}{4}\,\frac{C_0^{2}}{n^{12}}\,.
\]
Insert \Cref{prop:qvalue} together with the bound $\left|q_n'(\jone)-D_0\right|\le 2 n^{-4}$ of \Cref{lem:slope}(iv). The leading terms then cancel \emph{exactly} by \eqref{eq:BdC}, and for $n\ge21$,
\[
\left|q_n\left(y_n\right)\right|\ \le\ \frac{1}{n^{8}}\left(\frac{19}{10}+\frac{2\,C_0}{n^{2}}+\frac{\pi R_n^2C_0^{2}}{4n^{4}}\right)
\ \le\ \frac{1.93}{n^{8}}\ =:\ \varepsilon_n.
\]
Now set $L:=\frac{53}{100}$ and $I:=\left[\jone-\tfrac1{10},\jone+\tfrac1{10}\right]$, and note that $\left[y_n-\varepsilon_n/L,\,y_n+\varepsilon_n/L\right]\subset I$, with $\varepsilon_n/L\le4n^{-8}$ (indeed $C_0n^{-6}+4n^{-8}<10^{-1}$). By \Cref{lem:slope}(iii) and the mean value theorem,
\[
q_n\left(y_n-\frac{\varepsilon_n}{L}\right)\ \ge\ q_n(y_n)+\varepsilon_n\ \ge\ 0\ \ge\ q_n(y_n)-\varepsilon_n\ \ge\ q_n\left(y_n+\frac{\varepsilon_n}{L}\right),
\]
so $q_n$ has a zero $z_n$ with $|z_n-y_n|\le\varepsilon_n/L\le4n^{-8}$; being strictly decreasing on $I$, $q_n$ has no other zero there. Since $q_n>0$ on $\left(0,\jone\right]$ and $q_n$ is strictly decreasing on $\left[\jone,z_n\right]$, the point $z_n$ is the \emph{first} positive zero of $q_n$, i.e.\ $z_n=\kappa(P_{2n})$ by \Cref{cor:endpoint}. This proves \eqref{eq:effective}.
\end{proof}

\begin{remark}\label{rem:sharp-constant}
The certified enclosures suggest that the constant $4$ in \eqref{eq:effective} is far from optimal: $n^{8}\left(\kappa(P_{2n})-\jone-C_0n^{-6}\right)\approx0.67$ for $9\le n\le20$, consistent with a genuine $n^{-8}$ term in the asymptotics.
\end{remark}

\subsection{Proof of \texorpdfstring{\Cref{thm:monot}}{the monotonicity theorem}}\label{ssec:monot-proof}

\begin{proof}[Proof of \Cref{thm:monot}]
The comparisons in \eqref{eq:monotonicity} not involving arbitrarily large $n$ come from \Cref{prop:finite-kappa-comparison}. It remains to prove, for every $n\ge9$,
\begin{equation}\label{eq:two-claims}
\kappa\left(P_{2n}\right)>\kappa\left(P_{2n+2}\right)
\qquad\text{and}\qquad
\kappa\left(P_{2n}\right)>\jone.
\end{equation}
Both follow from \Cref{thm:effective} together with the certified bound $C_0>149/500$ of \Cref{lem:certified}(ii). First,
\[
\kappa\left(P_{2n}\right)-\jone\ \ge\ \frac{C_0}{n^{6}}-\frac{4}{n^{8}}
\ >\ \frac{149\,n^{2}-2000}{500\,n^{8}}\ >\ 0
\qquad(n\ge9).
\]
Second, applying \eqref{eq:effective} to $n$ and to $n+1$,
\[
\kappa\left(P_{2n}\right)-\kappa\left(P_{2n+2}\right)
\ \ge\ \frac{149}{500}\left(\frac{1}{n^{6}}-\frac{1}{(n+1)^{6}}\right)-4\left(\frac{1}{n^{8}}+\frac{1}{(n+1)^{8}}\right)
\ =\ \frac{N_n}{500\,n^{8}(n+1)^{8}}\,,
\]
where
\[
N_n:=149\,n^{2}(n+1)^{2}\left((n+1)^{6}-n^{6}\right)-2000\left((n+1)^{8}+n^{8}\right)\ \in\ \mathbb{Z}.
\]
For $n=9,10,11$ one computes exactly
\[
N_9=279410415100,\qquad
N_{10}=762329564900,\qquad
N_{11}=1864174692448,
\]
all positive. For $n\ge12$, using $(n+1)^{6}-n^{6}\ge6n^{5}$, hence $\frac1{n^{6}}-\frac1{(n+1)^{6}}\ge\frac{6}{(n+1)^{7}}$, and $\frac1{n^{8}}+\frac1{(n+1)^{8}}\le\frac{2}{n^{8}}$, it suffices that $894\,n^{8}\ge 4000\,(n+1)^{7}$; since $t^{8}/(t+1)^{7}$ is increasing, this follows from the single check
\[
894\cdot12^{8}=384\,403\,636\,224\ >\ 250994068000=4000\cdot13^{7}.
\]
This proves \eqref{eq:two-claims} and completes the proof of \Cref{thm:monot}.
\end{proof}

\subsection{The certified computations}
\label{ssec:certified}

All computer-assisted steps of this section, the zero enclosures of \Cref{prop:finite-kappa-comparison,prop:bridge}, and the constants of \Cref{lem:certified,lem:elemseries}, are verified by interval arithmetic using the C++ version of Arb/Flint \cite{Johansson2019}, see \cite{Tucker} for background on validated numerics). Every quantity is enclosed in an interval mathematically guaranteed to contain it, every claimed inequality is checked between the corresponding interval endpoints, and the routine terminates with an error if any single check fails; ordinary floating point is used only to \emph{select} candidate brackets, never in a verification. The only transcendental evaluations required are $J_0,\dots,J_3$ on $[0,4.2]$, enclosed by the truncated series \eqref{eq:series} with a rigorous tail bound, and the elementary functions $\sin$, $\cos$, $\sec$ and $\cot$, likewise enclosed by their Maclaurin series with a rigorous tail bound; the profile $q_n$ is evaluated through the cancellation-free formula \eqref{eq:qformula}, in the rescaled form \eqref{eq:qn-scaled}. The routine, its documentation, and a floating-point cross-check of all constants are provided in the accompanying repository at
\begin{center}
\url{https://github.com/javigomez2001/BLP}.
\end{center}

The precision demanded of these computations is modest.  The tightest comparison in
\eqref{eq:finite-chain} is $\kappa(P_{18})>\kappa(\D)$, with a margin of
$5.77\cdot10^{-7}$, so enclosures of half-width $2.5\cdot10^{-7}$ already suffice for
\Cref{prop:finite-kappa-comparison}.  The tightest of the inequalities
\eqref{eq:bridge} is the one at $n=20$, where the two sides differ by
$1.30\cdot10^{-10}$; since $\left|q_n'\right|\approx0.66$ near the relevant zero, this
asks for roughly twelve significant digits of $q_{20}$, and the enclosures of
\Cref{tab:enclosures} were computed with $60$ digits of working precision.

\appendix
\crefalias{section}{appendix}
\crefalias{subsection}{appendix}

\section{Proofs of technical Lemmas with numerical components}\label{app:num}

\subsection{Proof of \eqref{eq:master}}\label{ssec:proofmaster}

\noindent\textbf{The cases $n=3,4,5$.}  All quantities in \eqref{eq:master}
are elementary closed forms of $\delta=\pi/(2n)$.  Their certified values, rounded so
as to weaken \eqref{eq:master} (left column rounded \emph{up}, all right-hand factors
rounded \emph{down}), are: 

\begin{center}
\renewcommand{\arraystretch}{1.25}
\begin{tabular}{c|c|c c c c c}
\toprule
& \text{upper bounds for} & \multicolumn{5}{c}{lower bounds for}\\
$n$ & $\dfrac{\pi^{2n}(2n)!}{(4n)!}$ &
$\dfrac{\sqrt2(4n+2)}{32.08}$ & $\widetilde{\mathcal{R}}_1-\widetilde{\mathcal{R}}_2$ & $\widetilde{\mathcal{R}}_2^{2n+1}$ &
$\underline{\mathcal{J}}_{2n}(\pi^2\widetilde{\mathcal{R}}_1^2)$ & $\mathcal{S}_n$\\
\midrule
$3$ & $1.4451\cdot10^{-3}$ & $0.61717$ & $0.063880$ & $0.38800$ & $0.21692$ &
$3.3181\cdot10^{-3}$\\
$4$ & $1.8286\cdot10^{-5}$ & $0.79351$ & $0.037101$ & $0.51215$ & $0.31693$ &
$4.7785\cdot10^{-3}$\\
$5$ & $1.3969\cdot10^{-7}$ & $0.96984$ & $0.024083$ & $0.59568$ & $0.39758$ &
$5.5315\cdot10^{-3}$\\
\bottomrule
\end{tabular}
\end{center}

\noindent
(For orientation: $\widetilde{\mathcal{R}}_1=0.937379\ldots$, $0.965452\ldots$, $0.978081\ldots$ and
$\widetilde{\mathcal{R}}_2=0.873498\ldots$, $0.928350\ldots$, $0.953997\ldots$ for $n=3,4,5$; the
arguments of $\underline{\mathcal{J}}_{2n}$ are $\pi^2\widetilde{\mathcal{R}}_1^2=8.6722\ldots$, $9.1994\ldots$,
$9.4417\ldots\le\pi^2$.)  In each row the last column is a lower bound for the product of
the four preceding entries, and it exceeds the entry in the second column by a factor
$\ge2.29$ ($n=3$), $\ge261$ ($n=4$), $\ge3.9\cdot10^4$ ($n=5$).  Hence
\eqref{eq:master} holds for $n=3,4,5$; the positivity of $\underline{\mathcal{J}}_{2n}(\pi^2\widetilde{\mathcal{R}}_1^2)$
is read off the table.

\medskip\noindent\textbf{The case $n\ge6$.}  We bound the two sides of
\eqref{eq:master} separately by monotone expressions.

\emph{Left-hand side.}  Since $\dfrac{(4n)!}{(2n)!}=(2n+1)(2n+2)\cdots(4n)\ge(2n+1)^{2n}$,
\begin{equation}\label{eq:Lambda}
\frac{\pi^{2n}(2n)!}{(4n)!}\ \le\
\Lambda_n:=\left(\frac{\pi^2}{(2n+1)^2}\right)^{\!n}.
\end{equation}

\emph{Right-hand  side.}  We use some elementary estimates, valid for $n\ge6$ (so
$\delta=\frac{\pi}{2n}\le\frac{\pi}{12}$ and $\nu=2n\ge12$):
\begin{itemize}
\item Since $\sec a-\sec b=\dfrac{\cos b-\cos a}{\cos a\cos b}
\ge\cos b-\cos a$ for $0\le b<a<\frac\pi2$, and
$\cos\frac\delta4-\cos\frac{3\delta}4=2\sin\frac\delta2\sin\frac\delta4
\ge2\cdot\frac{2}{\pi}\frac\delta2\cdot\frac{2}{\pi}\frac\delta4
=\frac{\delta^2}{\pi^2}$ (using $\sin x\ge\frac{2x}{\pi}$ on $[0,\frac\pi2]$),
\begin{equation}\label{eq:right1}
\widetilde{\mathcal{R}}_1-\widetilde{\mathcal{R}}_2=\cos\delta\left(\sec\frac{3\delta}4-\sec\frac\delta4\right)
\ \ge\ \cos\delta\cdot\frac{\delta^2}{\pi^2}
=\frac{\cos\delta}{4n^2}\ \ge\ \frac{0.9659}{4n^2},
\end{equation}
since $\cos\delta\ge\cos\frac{\pi}{12}>0.9659$.
\item $\widetilde{\mathcal{R}}_2\ge\cos\delta$, and by $\cos x\ge1-\frac{x^2}2$
and Bernoulli's inequality,
\begin{equation}\label{eq:right2}
\widetilde{\mathcal{R}}_2^{\,2n+1}\ \ge\ \cos^{2n+1}\delta\ \ge\
\left(1-\frac{\delta^2}{2}\right)^{2n+1}\ \ge\ 1-\frac{(2n+1)\pi^2}{8n^2}\ \ge\ 0.5544,
\end{equation}
since $t\mapsto\frac{2t+1}{8t^2}$ is decreasing for $t\ge1$ and at $n=6$ one has
$1-\frac{13\pi^2}{288}=0.55447\ldots$
\item By \Cref{lem:phi4}(ii)--(iii) with
$y=\pi^2\le3(\nu+3)$ and $\widetilde{\mathcal{R}}_1<1$,
\begin{equation}\label{eq:right3}
\underline{\mathcal{J}}_{2n}(\pi^2\widetilde{\mathcal{R}}_1^2)\ \ge\ \underline{\mathcal{J}}_{2n}(\pi^2)\ \ge\ 1-\frac{\pi^2}{2n+1}
\ \ge\ 1-\frac{\pi^2}{13}\ >\ 0.2407,
\end{equation}
which in particular establishes $\underline{\mathcal{J}}_{2n}(\pi^2\widetilde{\mathcal{R}}_1^2)>0$ in this range of $n$.
\end{itemize}

Combining \eqref{eq:right1}--\eqref{eq:right3} with $\dfrac{4n+2}{4n^2}\ge\dfrac1n$ and
$\dfrac{\sqrt2}{32.08}>0.04408$,
\begin{equation}\label{eq:Rchain}
\mathcal{S}_n\ \ge\ 0.04408\cdot0.9659\cdot0.5544\cdot0.2407\cdot\frac1n
\ >\ \frac{0.00568}{n}\qquad(n\ge6).
\end{equation}

Finally, let $W_n:=n\Lambda_n$.  At $n=6$, using
$\pi^2<9.8697$,
\[
\Lambda_6=\left(\frac{\pi^2}{169}\right)^{6}<(0.0584)^6<3.97\cdot10^{-8},
\qquad W_6<2.4\cdot10^{-7}<0.00568.
\]
Then, for any $n\ge6$,
\[
\frac{W_{n+1}}{W_n}
=\frac{n+1}{n}\cdot\frac{\pi^2}{(2n+3)^2}\cdot
\left(\frac{2n+1}{2n+3}\right)^{2n}
\ \le\ \frac{7}{6}\cdot\frac{\pi^2}{225}\ <\ 0.052\ <\ 1,
\]
so $W_n<0.00568$ for all $n\ge6$.  Combining with \eqref{eq:Lambda} and
\eqref{eq:Rchain},
\[
\frac{\pi^{2n}(2n)!}{(4n)!}\le\Lambda_n=\frac{W_n}{n}<\frac{0.00568}{n}<\mathcal{S}_n
\qquad(n\ge6),
\]
which is \eqref{eq:master}.  This completes the proof.

\subsection{Certified constants and elementary bounds for \texorpdfstring{\Cref{section:regular-polygons-in-the-plane}}{the regular polygons section}}\label{app:certified-constants}

\begin{lemma}[certified constants]\label{lem:certified}
The following hold:
\begin{enumerate}
\item[\textup{(i)}] 
\[
\jone\in[3.831705970207512315,\ 3.831705970207512316]
\]
and
\[
J_0(\jone)\in[-0.40275948,\,-0.40275931];
\]
consequently,
\[
\jone^{2}\left|J_0(\jone)\right|/6<0.9856\qquad\text{and}\qquad B_0\in[0.196868,\,0.196869];
\]
\item[\textup{(ii)}] $149/500=0.298<C_0<0.2981$;
\item[\textup{(iii)}] $\dfrac{2\pi J_2(\rho)}{\rho}\ge\dfrac35$ for all $\rho\in[3.731,\,3.932]\supset\left[\jone-\tfrac1{10},\jone+\tfrac1{10}\right]$;
\item[\textup{(iv)}] $\dfrac{2\pi J_2(\jone)}{\jone^{2}}\ge\dfrac{17}{100}$;
\item[\textup{(v)}] the function $g_{\jone}$ of \Cref{lem:gbessel} satisfies $\left|g_{\jone}''''(s)\right|\le13$ for all $s\in[0.99,\,1.01]$;
\item[\textup{(vi)}]
$\left|\gamma_{\jone}''(s)\right|=\left|sJ_1(\jone s)+\jone s^{2}J_0(\jone s)\right|\le 2$
for all $s\in[0.99,\,1.01]$, with $\gamma_y$ as in \Cref{lem:gbessel}.
\end{enumerate}
\end{lemma}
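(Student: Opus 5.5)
Every item of the lemma is a claim about finitely many explicit real numbers or about one elementary function on a short compact interval. I would therefore prove it with the interval-arithmetic routine of \Cref{ssec:certified}, after reducing each item to evaluations of $J_0,\dots,J_3$ on $[0,4.2]$. These values are enclosed by the truncated power series \eqref{eq:series}. The tail is bounded rigorously using \Cref{lem:poisson}: once $k$ is beyond the argument, the terms of the series decay geometrically.

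For (i), I would enclose $\jone$ by a sign change of $J_1$ at the two stated endpoints, evaluated in interval arithmetic. Uniqueness of the zero in that tiny interval follows from $J_1'=J_0-J_1/z$ being certified negative there, and the zero is the first one by standard interlacing together with a certified sign check of $J_1$ on $(0,3.8]$. Then $J_0$ is evaluated on the enclosing interval. The bounds for $\jone^2|J_0(\jone)|/6$, for $B_0$, and for $C_0$ in (ii) follow by interval arithmetic on the closed-form expressions \eqref{eq:d0-def} and \eqref{eq:BdC}. Item (iv) is a single evaluation on the interval enclosure of $\jone$.

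Items (iii), (v) and (vi) are uniform bounds on intervals, and I would handle them by branch and bound. I subdivide $[3.731,3.932]$, respectively $[0.99,1.01]$, into subintervals and evaluate the expression with interval inputs on each. Wherever the resulting enclosure fails the inequality, I bisect further. For (vi) the expression $sJ_1(\jone s)+\jone s^2J_0(\jone s)$ is explicit. For (v) I first derive an explicit formula for $g_{\jone}''''$ in terms of $J_0,\dots,J_3$, using the derivative recurrences \cite[\S10.6]{DLMF} and the definition of $g_y$ in \Cref{lem:gbessel}; that formula is then evaluated on interval boxes in the same way.

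The main obstacle is (v). The fourth derivative produces many terms with cancellation, and naive interval evaluation overestimates its range. However, the bound $13$ is presumably not tight, and the interval has width only $0.02$. So I expect that modest subdivision, or a first-order Taylor (mean-value) form on each piece, will close the estimate with margin.
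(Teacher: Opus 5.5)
Your proposal is correct and is essentially the paper's own proof: fail-closed interval arithmetic, with (i) obtained by bracketing the first sign change of $J_1$ and certifying $J_1>0$ to its left, (ii) and (iv) evaluated on the resulting enclosure of $\jone$, and (iii), (v), (vi) handled by adaptive bisection. Two small calibrations. First, the positivity check in (i) must reach the left endpoint of the bracket, not stop at $3.8$; certifying $J_1'<0$ on all of $[3.8,3.84]$ closes that gap. Second, (v) is easier than you expect, because \Cref{lem:gbessel} already gives $g_y''''(s)=-2y^{2}J_0(ys)+\frac ys J_1(ys)+y^{3}sJ_1(ys)$, which has no heavy cancellation; however, the bound $13$ is fairly tight (the maximum, at $s=0.99$, is about $12.74$), so several levels of bisection are genuinely needed.
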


\begin{proof}
All items are verified by the fail-closed interval-arithmetic routine described in \Cref{ssec:certified}: (i) by bracketing the first sign change of $J_1$ (whose positivity on the interval to the left is verified by bisection), and (ii) from the resulting enclosure of $\jone$; (iii), (v) and (vi) by adaptive bisection over the stated intervals; (iv) on the enclosure of $\jone$.
\end{proof}

For the next lemma we note that if $F(z)=\sum_{k\ge K}c_kz^{2k}$ with all $c_k\ge0$ converges at $z=t>0$, then
\begin{equation}\label{eq:tail-trick}
0\le F(z)\le\left(\frac zt\right)^{2K}F(t)\qquad\text{for }0\le z\le t.
\end{equation}

\begin{lemma}\label{lem:elemseries}
For $0<\delta\le\frac{2}{25}$ and $|\theta|\le\delta$,
\begin{enumerate}
\item[\textup{(i)}] $0\ \le\ \sec\theta-1-\dfrac{\theta^{2}}{2}-\dfrac{5\theta^{4}}{24}\ \le\ \dfrac{\theta^{6}}{11}$;
\item[\textup{(ii)}] $0\ \le\ 1-\dfrac{\delta^{2}}{3}-\dfrac{\delta^{4}}{45}-\delta\cot\delta\ \le\ \dfrac{\delta^{6}}{460}$;
\item[\textup{(iii)}] $1-\dfrac{\delta^{2}}{6}-\dfrac{\delta^{4}}{40}-\dfrac{\delta^{6}}{125}\ \le\ \sqrt{\delta\cot\delta}\ \le\ 1-\dfrac{\delta^{2}}{6}-\dfrac{\delta^{4}}{40}$.
\end{enumerate}
\end{lemma}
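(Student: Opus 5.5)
The plan is to prove all three items from the Maclaurin expansions, with the tail controlled by \eqref{eq:tail-trick} evaluated at $t=\frac{2}{25}$; a finite interval-arithmetic check at $t=\frac2{25}$ closes each item. The key observation is that each quantity, after the displayed polynomial terms are removed, is an even power series with \emph{nonnegative} coefficients. That makes both signs immediate, and the size bounds then reduce to one evaluation each.

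For (i), we have $\sec\theta=\sum_{k\ge0}\frac{E_{2k}}{(2k)!}\theta^{2k}$ with Euler numbers $E_{2k}\ge0$ (in absolute value), namely $1,1,5,61,\dots$. Hence $F(\theta):=\sec\theta-1-\frac{\theta^2}{2}-\frac{5\theta^4}{24}=\sum_{k\ge3}c_k\theta^{2k}$ with $c_k\ge0$, which gives $F\ge0$. By \eqref{eq:tail-trick} with $K=3$, we get $F(\theta)\le(\theta/t)^6F(t)$. It then suffices to certify $F(t)/t^6\le\frac1{11}$ at $t=\frac2{25}$. The leading coefficient is $\frac{61}{720}\approx0.0847$, and the remainder is tiny, so this holds with margin. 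Since the series is even, we may take $|\theta|$ throughout.

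For (ii), we use $\delta\cot\delta=1-\sum_{k\ge1}\frac{2^{2k}|B_{2k}|}{(2k)!}\delta^{2k}=1-\frac{\delta^2}{3}-\frac{\delta^4}{45}-\frac{2\delta^6}{945}-\cdots$, where all subtracted coefficients are positive. The middle quantity is therefore $\sum_{k\ge3}$ of nonnegative terms, hence $\ge0$. By \eqref{eq:tail-trick} it is at most $(\delta/t)^6$ times its value at $t$. Since $\frac{2}{945}\approx\frac1{472.5}$ and the next terms are $O(t^2)$ smaller, the value at $t$ is certified to be $\le t^6/460$.

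For (iii), I set $x:=\frac{\delta^2}{3}+\frac{\delta^4}{45}$ and $\varepsilon\in[0,\delta^6/460]$ from (ii), so that $\delta\cot\delta=1-x-\varepsilon$. For the upper bound, note that $p:=1-\frac{\delta^2}{6}-\frac{\delta^4}{40}$ satisfies $p^2=1-\frac{\delta^2}{3}-\frac{\delta^4}{45}+\left(\frac{\delta^6}{120}+\frac{\delta^8}{1600}\right)$, using the identity $\frac{1}{36}-\frac{1}{20}=-\frac{1}{45}$. Thus $p^2\ge1-x\ge\delta\cot\delta$, and since $p>0$ this gives $\sqrt{\delta\cot\delta}\le p$. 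For the lower bound, let $p_-:=p-\frac{\delta^6}{125}$. Then $p_-^2=p^2-\frac{2p\delta^6}{125}+\frac{\delta^{12}}{125^2}\le1-x+\frac{\delta^6}{120}+\frac{\delta^8}{1600}-\frac{2p\delta^6}{125}+\cdots$. We need this to be $\le1-x-\frac{\delta^6}{460}$. This amounts to $\frac{2p}{125}\ge\frac1{120}+\frac1{460}+O(\delta^2)$. Since $\frac{2}{125}=0.016$ and $\frac1{120}+\frac1{460}\approx0.01051$, it holds for $\delta\le\frac2{25}$, where $p\ge0.998$.

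The only delicate point is the bookkeeping of the $O(\delta^8)$ terms in (iii), and bounding them crudely by $\delta\le\frac2{25}$ suffices. I expect no real obstacle beyond verifying the three numerical constants at $t=\frac2{25}$, which is done by the certified routine of \Cref{ssec:certified} via truncated series with explicit tails.
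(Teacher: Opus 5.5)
Your items (i) and (ii) follow the paper's argument exactly: positivity of the Maclaurin coefficients of $\sec$ and of $1-\delta\cot\delta$, then the tail estimate \eqref{eq:tail-trick} with $K=3$, $t=\frac{2}{25}$, then one certified evaluation at $t$.

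For (iii) your route is correct and more direct than the paper's. The paper writes $\sqrt{\delta\cot\delta}=\sqrt{1-y}$ with $y=\frac{\delta^2}{3}+\frac{\delta^4}{45}+\sigma$. It then uses the general bound $1-\frac y2-\frac{y^2}{8}-\frac{y^3}{8}\le\sqrt{1-y}\le1-\frac y2-\frac{y^2}{8}$ on $[0,\frac12]$. Finally it expands $\frac y2$ and $\frac{y^2}{8}$ in powers of $\delta$, bounds $\frac{y^3}{8}$, and collects the discarded pieces into $\delta^6\bigl(\frac1{920}+\frac1{540}+\frac1{5000}+\frac1{213}\bigr)<\frac{\delta^6}{125}$.

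You instead square the two candidate bounds in $\delta$, using the exact identity $p^2=1-\frac{\delta^2}{3}-\frac{\delta^4}{45}+\frac{\delta^6}{120}+\frac{\delta^8}{1600}$. This gives the upper bound at once, needing only $\sigma\ge0$ and $p>0$. It also reduces the lower bound to the single inequality $\frac{2p}{125}\ge\frac1{120}+\frac1{460}+\frac{\delta^2}{1600}+\frac{\delta^6}{125^2}$, which holds with a comfortable margin (about $0.016$ against $0.0105$).

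Your version has less bookkeeping and shows more clearly where the constant $\frac1{125}$ comes from. The paper's version runs through a reusable inequality for $\sqrt{1-y}$, at the cost of tracking the cross terms in $y^2$ and $y^3$.

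Two small points in your lower bound. The $\delta^{12}/125^2$ term you folded into ``$\cdots$'' lies on the unfavourable side and must be kept, as your final inequality implicitly does. No positivity check on $p-\frac{\delta^6}{125}$ is needed, since a negative lower bound would be trivially true.
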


\begin{proof}
(i) The Maclaurin coefficients of $\sec$ are positive \cite[\S4.19]{DLMF}, so the left inequality is clear, and the right one follows from \eqref{eq:tail-trick} with $K=3$, $t=\frac2{25}$, together with the one-point evaluation $\left(\sec t-1-\frac{t^{2}}{2}-\frac{5t^{4}}{24}\right)t^{-6}<\frac1{11}$ at $t=\frac2{25}$, checked by interval arithmetic (\Cref{ssec:certified}).

(ii) Likewise, $1-\delta\cot\delta=\sum_{k\ge1}\frac{2^{2k}|B_{2k}|}{(2k)!}\,\delta^{2k}$ has positive coefficients \cite[\S4.19]{DLMF}, the first two being $\frac13$ and $\frac1{45}$, so \eqref{eq:tail-trick} applies together with the one-point evaluation
\[
\left(1-\frac{t^{2}}{3}-\frac{t^{4}}{45}-t\cot t\right)t^{-6}\ <\ \frac1{460}
\qquad\text{at }t=\frac2{25}.
\]

(iii) Write $\sqrt{\delta\cot\delta}=\sqrt{1-y}$ with $y:=\frac{\delta^{2}}{3}+\frac{\delta^{4}}{45}+\sigma$ and $0\le\sigma\le\frac{\delta^{6}}{460}$ by (ii); note $y\le\frac{\delta^{2}}{2.99}\le\frac1{400}$. Squaring shows that
\[
1-\frac y2-\frac{y^{2}}{8}-\frac{y^{3}}{8}\ \le\ \sqrt{1-y}\ \le\ 1-\frac y2-\frac{y^{2}}{8}
\qquad\left(0\le y\le\tfrac12\right).
\]
Here $\frac y2=\frac{\delta^{2}}{6}+\frac{\delta^{4}}{90}+\frac\sigma2$ and
$\frac{y^{2}}{8}=\frac{\delta^{4}}{72}+\frac{\delta^{6}}{540}+r$ with
$0\le r\le\frac{\delta^{8}}{4000}\le\frac{\delta^{6}}{5000}$,
while $\frac{y^{3}}{8}\le\frac{\delta^{6}}{8\cdot2.99^{3}}\le\frac{\delta^{6}}{213}$.
Since $\frac{\delta^{4}}{90}+\frac{\delta^{4}}{72}=\frac{\delta^{4}}{40}$, all terms omitted from the quartic truncation are nonnegative and bounded by
$\delta^{6}\left(\frac1{920}+\frac1{540}+\frac1{5000}+\frac1{213}\right)<\frac{\delta^{6}}{125}$.
\end{proof}

\section{Bessel functions toolbox}\label{app:bessel}

Throughout, $J_\nu$ denotes the Bessel function of the first kind,
$j_{\nu,1}<j_{\nu,2}<\cdots$ are its positive zeros, and
\begin{equation}\label{eq:series}
J_\nu(x)=\sum_{k=0}^\infty\frac{(-1)^k}{k!\,(\nu+k)!}\left(\frac x2\right)^{\nu+2k},
\qquad\nu\in\mathbb{N}_0.
\end{equation}
its power series which converges for all $x$.

\begin{lemma}\label{lem:poisson}
For every $\nu>-\frac12$ and every $x\ge0$,
\[
|J_\nu(x)|\le\frac{(x/2)^\nu}{\Gamma(\nu+1)}.
\]
\end{lemma}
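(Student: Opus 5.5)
I will prove the bound from the Poisson integral representation of $J_\nu$ (see \cite[\S10.9.4]{DLMF}), which holds precisely in the stated range $\nu>-\frac12$:
\[
J_\nu(x)=\frac{(x/2)^\nu}{\sqrt{\pi}\,\Gamma\left(\nu+\frac12\right)}\int_{-1}^{1}\left(1-t^2\right)^{\nu-\frac12}\cos(xt)\dd t,\qquad x\ge0.
\]
The condition $\nu>-\frac12$ makes the weight $(1-t^2)^{\nu-1/2}$ integrable. The prefactor is also well defined and positive there, since $\Gamma(\nu+\frac12)>0$.

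Bounding $|\cos(xt)|\le1$ inside the integral gives
\[
|J_\nu(x)|\le\frac{(x/2)^\nu}{\sqrt\pi\,\Gamma\left(\nu+\frac12\right)}\int_{-1}^1\left(1-t^2\right)^{\nu-\frac12}\dd t .
\]
The remaining integral is a Beta integral. Substituting $t^2=s$ yields
\[
\int_{-1}^1\left(1-t^2\right)^{\nu-\frac12}\dd t=B\left(\tfrac12,\nu+\tfrac12\right)=\frac{\Gamma\left(\frac12\right)\Gamma\left(\nu+\frac12\right)}{\Gamma(\nu+1)}=\frac{\sqrt\pi\,\Gamma\left(\nu+\frac12\right)}{\Gamma(\nu+1)}.
\]
The factors $\sqrt\pi\,\Gamma(\nu+\frac12)$ cancel, leaving exactly $(x/2)^\nu/\Gamma(\nu+1)$. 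The case $x=0$ is trivial: the bound reads $0\le0$ for $\nu>0$, and $1\le1$ for $\nu=0$.

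There is no real obstacle here; the only point requiring care is citing the Poisson representation with its validity range $\nu>-\frac12$. If a self-contained argument is preferred for the integer orders actually used in the paper, there is an alternative. Expand $\cos(xt)$ in its power series and integrate term by term against the Beta weights. This recovers the series \eqref{eq:series} and thereby verifies the representation directly.
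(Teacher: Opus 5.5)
Your proof is correct, and it is essentially the paper's: the paper simply cites \cite[\S10.14.4]{DLMF}, and your derivation from the Poisson integral \cite[\S10.9.4]{DLMF} (bounding $|\cos(xt)|\le1$ and evaluating the Beta integral $B\bigl(\tfrac12,\nu+\tfrac12\bigr)$) is the standard proof of that cited inequality. The only minor point is that at $x=0$ with $-\frac12<\nu<0$ both sides are undefined, so in that range the statement should be read for $x>0$ only.
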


\begin{proof}
This is \cite[\S10.14.4]{DLMF}.
\end{proof}

\begin{lemma}\label{lem:lorch}
For every integer $\nu\ge6$,
\[
j_{\nu,1} \ge \sqrt{(\nu+1)(\nu+5)} \ge \sqrt{77} > 8.77 > 2\pi.
\]
\end{lemma}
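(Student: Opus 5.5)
The first inequality $j_{\nu,1}\ge\sqrt{(\nu+1)(\nu+5)}$ is the substantive part; the rest is arithmetic. For $\nu\ge6$ we have $(\nu+1)(\nu+5)\ge 7\cdot 11=77$, and $\sqrt{77}>8.77$ since $8.77^2=76.9129$. Finally $2\pi<6.2832<8.77$.

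For the lower bound on $j_{\nu,1}$ I would quote the classical inequality $j_{\nu,1}^2>(\nu+1)(\nu+5)$ for $\nu>-1$, which appears in Watson's treatise (\S15.3) and is a standard consequence of Rayleigh-type sums. To keep the toolbox self-contained, I would also sketch a proof via the Rayleigh sums. Write $\sigma_k:=\sum_{m\ge1} j_{\nu,m}^{-2k}$. From the Weierstrass product $\Gamma(\nu+1)(2/x)^\nu J_\nu(x)=\prod_m\left(1-x^2/j_{\nu,m}^2\right)$ and the power series \eqref{eq:series}, comparing coefficients of $x^2$ and $x^4$ in the logarithm gives
\[
\sigma_1=\frac{1}{4(\nu+1)},\qquad \sigma_2=\frac{1}{16(\nu+1)^2(\nu+2)}.
\]
All the terms are positive, so $j_{\nu,1}^{-4}<\sigma_2$; but that alone only yields $j_{\nu,1}^4>16(\nu+1)^2(\nu+2)$, which is weaker than needed.

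The sharper route combines the two sums: $j_{\nu,1}^{-2}\le\sigma_2/\sigma_1$ fails to be an upper bound, while $j_{\nu,1}^{2}\ge\sigma_1/\sigma_2$ does hold, because $\sigma_2=\sum j_m^{-4}\le j_1^{-2}\sum j_m^{-2}=j_1^{-2}\sigma_1$. This gives
\[
j_{\nu,1}^2\ge\frac{\sigma_1}{\sigma_2}=4(\nu+1)(\nu+2),
\]
which exceeds $(\nu+1)(\nu+5)$ whenever $4\nu+8\ge\nu+5$, i.e.\ for all $\nu\ge0$. So the cruder Rayleigh-ratio bound already suffices for the stated inequality.

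The only step requiring care is justifying the Weierstrass product, equivalently that the zeros of $J_\nu$ are real and simple with $\sum j_{\nu,m}^{-2}<\infty$. This is classical for integer $\nu\ge0$ and can be cited from Watson \S15.41 or \cite[\S10.21]{DLMF}. With that in hand, the coefficient comparison and the inequality $\sigma_2\le j_1^{-2}\sigma_1$ are immediate.
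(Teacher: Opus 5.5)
Your arithmetic for the tail of the chain is fine. If you simply cite $j_{\nu,1}\ge\sqrt{(\nu+1)(\nu+5)}$, you are doing what the paper does; it attributes this bound to Lorch \cite{Lorch}. The problem is the self-contained Rayleigh-sum argument you offer in its place. The inequality $\sigma_2=\sum_m j_{\nu,m}^{-4}\le j_{\nu,1}^{-2}\sum_m j_{\nu,m}^{-2}=j_{\nu,1}^{-2}\sigma_1$ is true, but rearranged it says $j_{\nu,1}^2\le\sigma_1/\sigma_2=4(\nu+1)(\nu+2)$. That is the classical Rayleigh \emph{upper} bound, not a lower bound. The statement you reject, $j_{\nu,1}^{-2}\le\sigma_2/\sigma_1$, and the one you accept, $j_{\nu,1}^2\ge\sigma_1/\sigma_2$, are the same inequality, and both are false. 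Your claimed bound $j_{\nu,1}^2\ge4(\nu+1)(\nu+2)$ already fails at $\nu=6$: $j_{6,1}\approx9.936$, so $j_{6,1}^2\approx98.7<224$. It also fails for large $\nu$, since $j_{\nu,1}=\nu+O(\nu^{1/3})$.

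The correct lower bound from Rayleigh sums is the one you dismissed: $j_{\nu,1}^{-4}<\sigma_2$, i.e.\ $j_{\nu,1}^2>4(\nu+1)\sqrt{\nu+2}$. For $\nu\ge6$ this is at least $28\sqrt8>79>77$. So it does prove $j_{\nu,1}>\sqrt{77}>8.77>2\pi$ self-containedly, and that is all the paper uses the lemma for (namely $2\pi<j_{2n,1}$ for $n\ge3$). However, it yields the first inequality $j_{\nu,1}^2\ge(\nu+1)(\nu+5)$ only when $4\sqrt{\nu+2}\ge\nu+5$, i.e.\ for $\nu\le7$. Higher Rayleigh sums cannot fix this. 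Each $\sigma_k$ is a rational function of $\nu$ of order $\nu^{1-2k}$, so every fixed-order Euler--Rayleigh lower bound $\sigma_k^{-1/k}$ grows only like $\nu^{2-1/k}$ and eventually falls below $(\nu+1)(\nu+5)$. For $\nu\ge8$ the first inequality of the lemma therefore needs an external input such as Lorch's theorem, and your argument as written does not establish it.
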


\begin{proof}
The lower bound $j_{\nu,1}\ge\sqrt{(\nu+1)(\nu+5)}$ is due to Lorch \cite{Lorch}.  The right-hand side increases in $\nu$, and at $\nu=6$ equals
$\sqrt{77} > 2\pi$.
\end{proof}

\begin{lemma}\label{lem:unimodal}
Let $\nu\ge1$.  Then $J_\nu>0$ on $(0,j_{\nu,1})$ and $J_\nu$ has exactly one critical
point $c_1=j'_{\nu,1}\in(0,j_{\nu,1})$; moreover $c_1>\nu$, $J_\nu$ is strictly increasing on
$(0,c_1]$ and strictly decreasing on $[c_1,j_{\nu,1})$.  Consequently, for any
$[a,b]\subset(0,j_{\nu,1})$,
\[
\min_{x\in[a,b]}J_\nu(x)=\min\left(J_\nu(a),J_\nu(b)\right).
\]
\end{lemma}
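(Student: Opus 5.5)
The statement to prove is \Cref{lem:unimodal}: for $\nu\ge1$, $J_\nu$ is positive on $(0,j_{\nu,1})$ and unimodal there, with a single critical point $c_1=j'_{\nu,1}>\nu$. The plan is to combine the interlacing of the zeros of $J_\nu$ and $J_\nu'$ with the Bessel differential equation. The minimum statement on $[a,b]$ then follows at once, because a continuous function that increases and then decreases on an interval attains its minimum over any compact subinterval at an endpoint.

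\emph{Positivity.} By the series \eqref{eq:series}, $J_\nu(x)\sim (x/2)^\nu/\nu!>0$ as $x\downarrow0$. Since $j_{\nu,1}$ is by definition the first positive zero, $J_\nu>0$ on $(0,j_{\nu,1})$.

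\emph{Existence and location of a critical point.} $J_\nu$ vanishes at $0$ (as $\nu\ge1$) and at $j_{\nu,1}$, and is positive in between, so Rolle's theorem gives a critical point in $(0,j_{\nu,1})$. Take $c_1$ to be the smallest positive zero of $J_\nu'$. Near $0$ we have $J_\nu'(x)\sim \nu(x/2)^{\nu-1}/(2\,\nu!)>0$, so $J_\nu'>0$ on $(0,c_1)$, and $c_1$ is a local maximum; hence $J_\nu''(c_1)\le0$. Writing the Bessel equation as
\[
x^2J_\nu''+xJ_\nu'+(x^2-\nu^2)J_\nu=0,
\]
and evaluating at $c_1$ gives $c_1^2J_\nu''(c_1)=(\nu^2-c_1^2)J_\nu(c_1)$. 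Here $J_\nu(c_1)>0$. If $c_1\le\nu$, the right-hand side would be $\ge0$, forcing $J_\nu''(c_1)=0$ and $c_1=\nu$; then $J_\nu(c_1)=J_\nu'(c_1)=J_\nu''(c_1)=0$ would follow by differentiating the equation, which is a contradiction. (Equivalently, I can invoke the standard fact $j'_{\nu,1}>\nu$ from DLMF \S10.21.) So $c_1>\nu$.

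\emph{Uniqueness of the critical point.} This is the main step. At any critical point $x_0\in(0,j_{\nu,1})$ with $x_0>\nu$, the equation gives $x_0^2J_\nu''(x_0)=-(x_0^2-\nu^2)J_\nu(x_0)<0$, so every such critical point is a strict local maximum. Critical points in $(0,\nu]$ are impossible: $J_\nu'>0$ on $(0,c_1)$ and $c_1>\nu$. So all critical points in $(0,j_{\nu,1})$ are strict local maxima. Between two consecutive strict local maxima of a $C^1$ function there must be a local minimum, which is another critical point, and it cannot be a strict maximum; this is a contradiction. Hence $c_1$ is the unique critical point in $(0,j_{\nu,1})$.

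It follows that $J_\nu'>0$ on $(0,c_1)$ and $J_\nu'<0$ on $(c_1,j_{\nu,1})$: the sign cannot change without a zero, and $J_\nu(j_{\nu,1})=0<J_\nu(c_1)$. This gives the claimed strict monotonicity on $(0,c_1]$ and on $[c_1,j_{\nu,1})$. The identification $c_1=j'_{\nu,1}$ is just notation for the first positive zero of $J_\nu'$. The only delicate point is the "critical points are strict maxima" argument, and it uses only $x_0>\nu$ and positivity of $J_\nu$.
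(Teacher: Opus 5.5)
Your proof is essentially correct but takes a different route from the paper's. The paper proves nothing from scratch here. It quotes positivity of $J_\nu$ on $(0,j_{\nu,1})$ and the interlacing $\nu<j'_{\nu,1}<j_{\nu,1}<j'_{\nu,2}$ from DLMF \S10.21(i) (and Watson \S15.3). Hence $J_\nu'$ has exactly one zero in $(0,j_{\nu,1})$, and unimodality is immediate.

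You instead derive uniqueness of the critical point from the Bessel equation itself. At any critical point $x_0\in(\nu,j_{\nu,1})$ one has $x_0^2J_\nu''(x_0)=-(x_0^2-\nu^2)J_\nu(x_0)<0$, so every critical point is a strict local maximum and there can be only one. This argument is self-contained, using only the ODE and the small-$x$ behaviour of the series. It also works verbatim for non-integer $\nu\ge1$, with $\Gamma(\nu+1)$ in place of $\nu!$. The paper's version is shorter but rests entirely on the cited interlacing. Your Rolle, uniqueness and monotonicity steps are all sound.

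One justification is wrong as written, though easily repaired: the exclusion of $c_1=\nu$. If $J_\nu'(\nu)=J_\nu''(\nu)=0$, differentiating the equation gives $\nu^2J_\nu'''(\nu)+2\nu J_\nu(\nu)=0$, not $J_\nu(\nu)=0$. So the equation alone yields no contradiction.

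What does follow is $J_\nu'''(\nu)<0$. Then $J_\nu'(x)=\tfrac12J_\nu'''(\nu)(x-\nu)^2+o((x-\nu)^2)<0$ just to the left of $\nu$, contradicting $J_\nu'>0$ on $(0,c_1)$.

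A cleaner way to get $c_1>\nu$ in one stroke is to write the equation as $(xJ_\nu')'=\frac{\nu^2-x^2}{x}J_\nu$. If $c_1\le\nu$, the right-hand side is positive on $(0,c_1)$, because $J_\nu>0$ there as $c_1<j_{\nu,1}$. Since $xJ_\nu'(x)\to0$ as $x\downarrow0$, this gives $c_1J_\nu'(c_1)>0$, contradicting $J_\nu'(c_1)=0$. Your fallback of citing $j'_{\nu,1}>\nu$ from DLMF is exactly what the paper does.
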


\begin{proof}
The positivity of $J_\nu$ on $(0,j_{\nu,1})$ and the interlacing
$\nu<j'_{\nu,1}<j_{\nu,1}<j'_{\nu,2}$ are \cite[\S10.21(i)]{DLMF}; see also
\cite[\S15.3]{Watson}.  Since $J_\nu'$ has no zero in $(0,j_{\nu,1})$ other than
$j'_{\nu,1}$, and $J_\nu'>0$ near $0$, the function increases on $(0,c_1]$ and
decreases on $[c_1,j_{\nu,1})$; such a function attains its minimum on any
subinterval at an endpoint.
\end{proof}

\begin{lemma}\label{lem:phi4}
For an integer $\nu\ge6$ write, for $y\ge0$,
\[
\mathcal{J}_\nu(y):=\nu!\sum_{k=0}^\infty\frac{(-1)^ky^k}{k!\,(\nu+k)!},
\]
so that
\[
J_\nu(x)=\frac{(x/2)^\nu}{\nu!}\,\mathcal{J}_\nu\left((x/2)^2\right),
\]
and let
\[
\underline{\mathcal{J}}_\nu(y):=1-\frac{y}{\nu+1}+\frac{y^2}{2(\nu+1)(\nu+2)}
-\frac{y^3}{6(\nu+1)(\nu+2)(\nu+3)}.
\]
Then:
\begin{enumerate}
\item[\textup{(i)}] $\mathcal{J}_\nu(y)\ge\underline{\mathcal{J}}_\nu(y)$ for all $y\in[0,\pi^2]$;
\item[\textup{(ii)}] $\underline{\mathcal{J}}_\nu$ is strictly decreasing on $[0,\infty)$;
\item[\textup{(iii)}] $\underline{\mathcal{J}}_\nu(y)\ge1-\dfrac{y}{\nu+1}$ whenever $0\le y\le3(\nu+3)$.
\end{enumerate}
\end{lemma}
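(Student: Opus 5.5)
The natural plan is to work directly with the normalised series $\mathcal{J}_\nu(y)=\sum_{k\ge0}t_k(y)$, where
\[
t_k(y):=\frac{(-1)^k\,\nu!\,y^k}{k!\,(\nu+k)!},
\qquad
\frac{|t_{k+1}(y)|}{|t_k(y)|}=\frac{y}{(k+1)(\nu+k+1)}.
\]
The identity $J_\nu(x)=\frac{(x/2)^\nu}{\nu!}\mathcal{J}_\nu((x/2)^2)$ is immediate from \eqref{eq:series}: factor out $(x/2)^\nu/\nu!$. Note also that $\underline{\mathcal{J}}_\nu=t_0+t_1+t_2+t_3$ is exactly the partial sum through $k=3$.

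For (i), use the alternating series estimate. For $y\le\pi^2<10$ and $\nu\ge6$, the ratio above is at most $\pi^2/(\nu+1)<1.41$ already at $k=0$, so we cannot claim monotonicity of $|t_k|$ from $k=0$. We only need it from $k=4$ on, i.e. $y\le(k+1)(\nu+k+1)$ for all $k\ge4$. This holds since $5\cdot11=55>\pi^2$. Hence the tail $\sum_{k\ge4}t_k$ is an alternating series with terms decreasing in modulus to $0$ and first term $t_4\ge0$. Its sum therefore lies in $[0,t_4]$, giving $\mathcal{J}_\nu-\underline{\mathcal{J}}_\nu\ge0$.

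For (ii), differentiate:
\[
\underline{\mathcal{J}}_\nu'(y)=-\frac1{\nu+1}\left(1-\frac{y}{\nu+2}+\frac{y^2}{2(\nu+2)(\nu+3)}\right).
\]
With $a=\nu+2$ and $b=\nu+3$, the quadratic in the bracket has discriminant $\frac1{a^2}-\frac{2}{ab}=\frac{b-2a}{a^2b}<0$, since $b<2a$. So the bracket is always positive, and $\underline{\mathcal{J}}_\nu'<0$ on $[0,\infty)$.

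For (iii), we must show
\[
\frac{y^2}{2(\nu+1)(\nu+2)}-\frac{y^3}{6(\nu+1)(\nu+2)(\nu+3)}=\frac{y^2}{2(\nu+1)(\nu+2)}\left(1-\frac{y}{3(\nu+3)}\right)\ge0.
\]
This holds precisely when $y\le3(\nu+3)$.

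None of the steps is a real obstacle. The only point needing care is in (i): the alternating-series bound must start from $k=4$, not $k=0$, because the early terms need not decrease in modulus when $y$ is close to $\pi^2$ and $\nu$ is small.
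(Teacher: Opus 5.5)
Your proposal is correct and follows essentially the same route as the paper: an alternating-tail argument for (i), the negative discriminant of the quadratic factor of $\underline{\mathcal{J}}_\nu'$ for (ii), and the factorisation $\frac{y^2}{2(\nu+1)(\nu+2)}\bigl(1-\frac{y}{3(\nu+3)}\bigr)\ge0$ for (iii). The one small difference is in (i). The paper verifies that the moduli $|t_k|$ decrease already from $k=1$, using the ratio bound $\pi^2/(2(\nu+2))\le\pi^2/16<1$, and then pairs the tail as $(t_4-t_5)+(t_6-t_7)+\cdots$. You check the monotonicity only from $k=4$ on, which is all the argument needs.
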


\begin{proof}
\begin{enumerate}
\item[\textup{(i)}] Set $t_k:=\dfrac{y^k\,\nu!}{k!\,(\nu+k)!}\ge0$, so
$\mathcal{J}_\nu(y)=\sum_k(-1)^kt_k$ converges absolutely.  The ratio
$\dfrac{t_{k+1}}{t_k}=\dfrac{y}{(k+1)(\nu+k+1)}$ satisfies, for $k\ge1$, $\nu\ge6$ and
$y\le\pi^2$,
\[
\frac{t_{k+1}}{t_k}\le\frac{\pi^2}{2(\nu+2)}\le\frac{\pi^2}{16}<1,
\]
so $t_1\ge t_2\ge t_3\ge\cdots$.  By absolute convergence we may regroup.
\[
\mathcal{J}_\nu(y)-\underline{\mathcal{J}}_\nu(y)=\sum_{k=4}^\infty (-1)^kt_k
=(t_4-t_5)+(t_6-t_7)+\cdots\ \ge0.
\]
\item[\textup{(ii)}] $\underline{\mathcal{J}}_\nu'(y)=-\dfrac{1}{\nu+1}\left(1-\dfrac{y}{\nu+2}
+\dfrac{y^2}{2(\nu+2)(\nu+3)}\right)$, and the quadratic factor in $y$ with
positive leading coefficient has the discriminant
\[
\frac1{(\nu+2)^2}-\frac{2}{(\nu+2)(\nu+3)}
=\frac{(\nu+3)-2(\nu+2)}{(\nu+2)^2(\nu+3)}
=\frac{-(\nu+1)}{(\nu+2)^2(\nu+3)}<0,
\]
hence is strictly positive. Thus $\underline{\mathcal{J}}_\nu'(y)<0$ for all $y$.
\item[\textup{(iii)}] The inequality is equivalent to
$\dfrac{y^2}{2(\nu+1)(\nu+2)}\ge\dfrac{y^3}{6(\nu+1)(\nu+2)(\nu+3)}$, i.e.\
$y\le3(\nu+3)$.
\end{enumerate}
\end{proof}

\begin{lemma}\label{lem:gbessel}
For $y>0$ let $g_y(s):=s\,J_1(ys)/y$ and $\gamma_y(s):=-s^{2}J_2(ys)/y$. Then
\begin{equation}\label{eq:gders}
\begin{split}
g_y'(s)&=sJ_0(ys),\qquad\qquad\qquad\;\,
g_y''(s)=J_0(ys)-ysJ_1(ys),\\
g_y'''(s)&=-yJ_1(ys)-y^{2}sJ_0(ys),\qquad
g_y''''(s)=-2y^{2}J_0(ys)+\frac ys\,J_1(ys)+y^{3}sJ_1(ys).
\end{split}
\end{equation}
and
\begin{equation}\label{eq:gammaders}
\gamma_y'(s)=-s^{2}J_1(ys),
\qquad
\gamma_y''(s)=-sJ_1(ys)-ys^{2}J_0(ys).
\end{equation}
In particular, if $J_1(y)=0$, then
\[
g_y(1)=0,\qquad g_y'(1)=g_y''(1)=J_1'(y)=J_0(y),\qquad g_y'''(1)=-y^{2}J_0(y),
\]
and
\[
\gamma_y(1)=\frac{J_0(y)}{y},\qquad \gamma_y'(1)=0.
\]
\end{lemma}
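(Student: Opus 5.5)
The formulas \eqref{eq:gders} and \eqref{eq:gammaders} all follow from two standard Bessel identities \cite[\S10.6]{DLMF}:
\[
\bigl(zJ_1(z)\bigr)'=zJ_0(z),\qquad \bigl(z^{2}J_2(z)\bigr)'=z^{2}J_1(z),
\]
together with $J_0'=-J_1$ and $J_1'(z)=J_0(z)-J_1(z)/z$. Each derivative is one application of the chain and product rules, and no estimates are needed.

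\textbf{Derivatives of $g_y$.} Write $g_y(s)=y^{-2}\,(ys)J_1(ys)$. Differentiating with $z=ys$ gives
\[
g_y'(s)=y^{-2}\cdot y\cdot (ys)J_0(ys)=sJ_0(ys).
\]
Then $g_y''(s)=J_0(ys)-ysJ_1(ys)$ by $J_0'=-J_1$. For $g_y'''$, differentiate $-ysJ_1(ys)=-y^{-1}(ys)J_1(ys)$ using the first identity to get $-y\,(ys)J_0(ys)$. Adding $\frac{d}{ds}J_0(ys)=-yJ_1(ys)$ yields $g_y'''(s)=-yJ_1(ys)-y^2sJ_0(ys)$. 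Finally,
\[
\frac{d}{ds}\bigl(-yJ_1(ys)\bigr)=-y^2J_1'(ys)=-y^2J_0(ys)+\frac{y}{s}J_1(ys),
\qquad
\frac{d}{ds}\bigl(-y^2sJ_0(ys)\bigr)=-y^2J_0(ys)+y^3sJ_1(ys).
\]
Summing these gives the stated $g_y''''$.

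\textbf{Derivatives of $\gamma_y$.} Write $\gamma_y(s)=-y^{-3}(ys)^2J_2(ys)$. The second identity gives $\gamma_y'(s)=-y^{-3}\cdot y\cdot(ys)^2J_1(ys)=-s^2J_1(ys)$. Then $\gamma_y''(s)=-y^{-2}\frac{d}{ds}\bigl((ys)J_1(ys)\bigr)\cdot y\,$, which by the first identity equals $-sJ_0(ys)\cdot ys\,y^{-1}\cdots$; more cleanly, $-s^2J_1(ys)=-y^{-1}s\,(ys)J_1(ys)$, so
\[
\gamma_y''(s)=-sJ_1(ys)-ys^2J_0(ys).
\]

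\textbf{Evaluation at $s=1$ when $J_1(y)=0$.} Setting $s=1$ kills every $J_1$ term, which gives $g_y(1)=0$, $g_y'(1)=g_y''(1)=J_0(y)$, $g_y'''(1)=-y^2J_0(y)$ and $\gamma_y'(1)=0$. Also $J_1'(y)=J_0(y)-J_1(y)/y=J_0(y)$. For $\gamma_y(1)=-J_2(y)/y$, use the recurrence $J_2(y)=\frac{2}{y}J_1(y)-J_0(y)=-J_0(y)$, so $\gamma_y(1)=J_0(y)/y$.

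There is no real obstacle here. The only care needed is the bookkeeping of the chain-rule factors of $y$, especially in $g_y''''$, where the $y/s$ term comes from $J_1'$.
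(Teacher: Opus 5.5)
Your proof is correct and follows essentially the same route as the paper. Both arguments apply the chain rule with $Y=ys$, use $(YJ_1(Y))'=YJ_0(Y)$, $J_0'=-J_1$ and $J_1'(Y)=J_0(Y)-J_1(Y)/Y$, and finish at $s=1$ with the recurrence $J_2=2J_1/Y-J_0$. The only difference is that you use $(Y^2J_2(Y))'=Y^2J_1(Y)$ where the paper uses the equivalent $J_2'(Y)=J_1(Y)-2J_2(Y)/Y$.

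Delete your abandoned first attempt at $\gamma_y''$: the expression $-y^{-2}\frac{d}{ds}\bigl((ys)J_1(ys)\bigr)\cdot y$ equals $-ysJ_0(ys)$, not $\gamma_y''(s)$. The product-rule computation that follows it is correct.
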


\begin{proof}
Since $s\,J_1(ys)=\frac1y\,YJ_1(Y)$ with $Y:=ys$, the identities $\left(YJ_1(Y)\right)'=YJ_0(Y)$ and $J_0'=-J_1$ \cite[\S10.6]{DLMF} give $g_y'(s)=sJ_0(ys)$ and then, differentiating repeatedly and using $J_1'(Y)=J_0(Y)-J_1(Y)/Y$, the remaining formulas in \eqref{eq:gders}. For \eqref{eq:gammaders}, the identity $J_2'(Y)=J_1(Y)-2J_2(Y)/Y$ \cite[\S10.6]{DLMF} gives $\gamma_y'(s)=-s^{2}J_1(ys)$, and a further differentiation together with $J_1'(Y)=J_0(Y)-J_1(Y)/Y$ gives $\gamma_y''$. The values at $s=1$ follow from $J_1(y)=0$, the last two using also the recurrence $J_2(Y)=2J_1(Y)/Y-J_0(Y)$.
\end{proof}

\begin{lemma}\label{lem:J2mono}
For $0<y<4$,
\[
J_3(y)\ \ge\ \frac{(y/2)^{3}}{6}\left(1-\frac{(y/2)^{2}}{4}\right)\ >\ 0,
\]
and consequently $y^{-2}J_2(y)$ is strictly decreasing on $(0,4)$.
\end{lemma}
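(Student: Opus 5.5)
Both claims will come from the power series \eqref{eq:series} truncated as an alternating series, together with the recurrence for derivatives of $y^{-\nu}J_\nu$.

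\emph{Lower bound for $J_3$.} Write $J_3(y)=\sum_{k\ge0}(-1)^k t_k$ with $t_k:=\frac{(y/2)^{3+2k}}{k!\,(3+k)!}\ge0$. The ratio of consecutive terms is
\[
\frac{t_{k+1}}{t_k}=\frac{(y/2)^2}{(k+1)(k+4)},
\]
which is at most $\frac{4}{4}=1$ for $k=0$ and strictly less than $1$ for $k\ge1$ when $0<y<4$. Hence $t_1\ge t_2\ge t_3\ge\cdots$. Grouping the tail in pairs, exactly as in the proof of \Cref{lem:phi4}(i), gives
\[
J_3(y)-t_0+t_1=(t_2-t_3)+(t_4-t_5)+\cdots\ \ge0,
\qquad\text{so}\qquad
J_3(y)\ge t_0-t_1=\frac{(y/2)^3}{6}\left(1-\frac{(y/2)^2}{4}\right).
\]
The right-hand side is strictly positive because $(y/2)^2<4$ for $0<y<4$. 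The monotonicity of $(t_k)$ is only needed from $k=1$ onwards, where the inequality is strict.

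\emph{Monotonicity of $y^{-2}J_2(y)$.} Use the standard identity $\frac{\dd}{\dd y}\bigl(y^{-\nu}J_\nu(y)\bigr)=-y^{-\nu}J_{\nu+1}(y)$ \cite[\S10.6]{DLMF}. With $\nu=2$ this gives
\[
\frac{\dd}{\dd y}\bigl(y^{-2}J_2(y)\bigr)=-y^{-2}J_3(y)<0\qquad\text{on }(0,4),
\]
by the first part, so $y^{-2}J_2(y)$ is strictly decreasing on $(0,4)$.

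The only point requiring care is the boundary ratio $t_1/t_0\le1$. It is not actually used, since the pairing starts at $t_2$. What is needed is $t_2\ge t_3\ge\cdots$, and this holds because the ratio for $k\ge2$ is at most $\frac{4}{3\cdot6}<1$.
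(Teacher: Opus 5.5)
Your proof is correct and is essentially the paper's argument: you truncate the alternating series for $J_3$ after two terms, justified by the ratio $\frac{(y/2)^2}{(k+1)(k+4)}<1$, and then apply $\left(y^{-2}J_2(y)\right)'=-y^{-2}J_3(y)$. Incidentally, the $k=0$ ratio is also strictly below $1$ for $y<4$, as the paper notes, though as you observe this is not needed for the pairing argument.
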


\begin{proof}
In the series \eqref{eq:series} for $J_3$, the ratio of consecutive terms is $\frac{(y/2)^{2}}{(k+1)(k+4)}\le\frac{(y/2)^{2}}{4}<1$ for $y<4$, so the series is alternating with strictly decreasing terms and $J_3(y)$ is bounded below by its first two terms. The last claim follows from $\left(y^{-2}J_2(y)\right)'=-y^{-2}J_3(y)$ \cite[\S10.6]{DLMF}.
\end{proof}

\subsection*{Acknowledgements}
\phantomsection
\addcontentsline{toc}{section}{Acknowledgements}

This paper involves essential contributions from Bogdan Georgiev, who cannot be listed as a coauthor for technical reasons.

JGS has been partially supported by NSF under Grants DMS-2245017, DMS-2247537, DMS-2434314, DMS-2554957, and by a Simons Fellowship.

ML has been partially supported by a Simons Fellowship.

IP has been partially supported by NSERC and FRQNT.

JGS, ML, and IP have been partially supported by the AI for Math Fund, an initiative by Renaissance Philanthropy with funding from XTX Markets.

The authors would like to thank the Isaac Newton Institute for Mathematical Sciences, Cambridge, for support and hospitality during the programme Geometric spectral theory and applications, where work on this paper was undertaken. This work was supported by EPSRC grant EP/Z000580/1.

\subsection*{Data availability statement}
\phantomsection
\addcontentsline{toc}{section}{Data availability statement}
The accompanying repository is available at
\begin{center}
\url{https://github.com/javigomez2001/BLP}.
\end{center}

\subsection*{AI usage disclosure}
\phantomsection
\addcontentsline{toc}{section}{AI usage disclosure}

The authors used a workflow combining ChatGPT Pro 5.5, Claude
Opus 4.8 and Gemini Pro 3.1 during the development of this work, including for suggesting proof
strategies and assisting with calculations. All mathematical statements, proofs, and verifications
were further developed and rigorously checked by the authors, who take full responsibility
for the results. The writeup of the results was done by the authors.

\phantomsection


\addcontentsline{toc}{section}{References}
\begin{thebibliography}{DSGSAH21}

{\small 

\bibitem[BLP09]{BLP09}
R.\ Benguria, M.\ Levitin, L.\ Parnovski,
\emph{Fourier transform, null variety, and Laplacian's eigenvalues}.
J.\ Funct.\ Anal.\ 257 (2009), 2088--2123.
\mydoi{10.1016/j.jfa.2009.06.022}.

\bibitem[CLDP26]{CLDP26}
G. Cao-Labora and J. de Dios Pont,
\emph{Counterexamples to Schiffer's conjecture}.
Preprint (2026).
\myarXiv{2608.05114}.

\bibitem[CS26]{CS26}
M. J. Colbrook and G. Stepaniants,
\emph{A computer-assisted counterexample to the planar Pompeiu and Schiffer conjectures}.
Preprint (2026).
\myarXiv{2608.01579}.

\bibitem[DGSPA26]{DGP2026}
J.~Dahne, J.~G\'omez-Serrano, and J.~Pech-Alberich,
\emph{Monotonicity of the first Dirichlet eigenvalue of regular polygons}.
Preprint (2026).
\myarXiv{2601.16285}

\bibitem[DLMF]{DLMF}
NIST Digital Library of Mathematical Functions.
\url{https://dlmf.nist.gov/}, Release 1.2.2 of 2024-09-15.
F.~W.~J.~Olver, A.~B.~Olde~Daalhuis, D.~W.~Lozier, B.~I.~Schneider,
R.~F.~Boisvert, C.~W.~Clark, B.~R.~Miller, B.~V.~Saunders,
H.~S.~Cohl, and M.~A.~McClain, eds.

\bibitem[FLPS23]{Filonov-Levitin-Polterovich-Sher:polya-conjecture-balls}
N.~Filonov, M.~Levitin, I.~Polterovich, and D.~A.~Sher,
\emph{P{\'o}lya's conjecture for Euclidean balls}.
Invent. Math. \textbf{234}:1 (2023), 129--169.
\mydoi{10.1007/s00222-023-01198-1}.

\bibitem[FLPS26]{Filonov-Levitin-Polterovich-Sher:polya-conjecture-higher-dimensional-neumann-balls}
N.~Filonov, M.~Levitin, I.~Polterovich, and D.~A.~Sher,
\emph{P{\'o}lya's conjecture for higher-dimensional Neumann balls}.
Preprint (2026).
\myarXiv{2607.29305}.

\bibitem[GS19]{GomezSerrano:survey-cap-in-pde}
J.~G\'omez-Serrano,
\emph{Computer-assisted proofs in PDE: a survey}.
SeMA J. \textbf{76}:3 (2019), 459--484.
\mydoi{10.1007/s40324-019-00186-x}.

\bibitem[GHLZ26]{GHLZ2026}
C.~Gui, Y.~Hu, Q.~Li, and C.~Zhang,
\emph{Mixed torsion on right triangles and the P\'olya--Szeg\H{o} monotonicity
problem for regular polygons}.
Preprint (2026). 
\myarXiv{2606.13448}.


\bibitem[Jo17]{Johansson2019}
F.~Johansson,
\emph{Arb: Efficient Arbitrary-Precision Midpoint-Radius Interval Arithmetic}.
IEEE Transactions on Computers \textbf{66}:8 (2017), 1281--1292.
\mydoi{10.1109/TC.2017.2690633}.

\bibitem[Lo93]{Lorch}
L.~Lorch,
\emph{Some inequalities for the first positive zeros of Bessel functions}.
SIAM J.~Math.\ Anal.\ \textbf{24} (1993), 814--823.
\mydoi{10.1137/0524050}.

\bibitem[RF14]{RyadovkinFilonov2014}
K.~Ryadovkin and N.~Filonov,
\emph{The set of zeros of the Fourier transform of the characteristic function
of a symmetric convex domain}.
J. Math. Sciences \textbf{198} (2014), 747--760.
\mydoi{10.1007/s10958-014-1823-1}.

\bibitem[Tu11]{Tucker}
W.\ Tucker.
\emph{Validated Numerics: A Short Introduction to Rigorous Computations}.
Princeton University Press, 2011.
\mydoi{10.2307/j.ctvcm4g18}.

\bibitem[Wa44]{Watson}
G.\ N.\ Watson.
\emph{A Treatise on the Theory of Bessel Functions}.
2nd ed., Cambridge University Press, 1944.

\bibitem[Za84]{Zastavnyi}
V.~P. Zastavnyi,
\emph{Zero set of the Fourier transform of measures and the summation of double
Fourier series by methods of Bernshtein--Rogosinski type},
Ukr.\ Math.\ J.\ \textbf{36} (1984), 459--464.
\mydoi{10.1007/BF01086770}.

}
\end{thebibliography}
\end{document}